\documentclass[reqno,twoside]{amsart}
\usepackage{amsthm}
\usepackage{xcolor}
\usepackage[bookmarks=true]{hyperref}

\usepackage{amsfonts,amsmath,amssymb}
\usepackage{mathrsfs,mathtools}
\usepackage{enumerate}
\usepackage{esint}
\usepackage{graphicx}
\usepackage{bm}
\usepackage{commath}
\usepackage{esint}
\usepackage{caption}
\usepackage{float}
\DeclareMathAlphabet{\mathpzc}{OT1}{pzc}{m}{it}
\usepackage{color}

\newcommand\lfrac{(-\Delta)^s} 

\newcommand\RthLI[1][]{R_{\theta, \theta'}^{q_{#1}, \infty}}
\newcommand\RthLo[1][]{R_{\theta, \theta'}^{q_{#1},1}}  
 
\newcommand\ext{\Omega_e}
\newcommand\ns{\mathcal N_s} 

\newcommand{\rn}{\mathbb R^n}

\newcommand{\E}{\mathcal{E}}

\newcommand\sfracf[1]{\frac{#1}{|x-y|^{n+2s}}}

\newcommand\wtt{W_{\theta}}
\newcommand\wtp{W_{\theta'}}
\newcommand\utp{u_{\theta}^*}
\newcommand\rnstar{\mathbb R^n_*}

\newcommand\ddualstar{\mathcal{D}'_*}

\usepackage{caption}

\numberwithin{equation}{section}

\usepackage[dvips,bottom=1.4in,right=1in,top=1in, left=1in]{geometry}

\newtheorem{theorem}{Theorem}[section]
\newtheorem{proposition}[theorem]{Proposition}
\newtheorem{lemma}[theorem]{Lemma}

\newtheorem{assumption}[theorem]{Assumption}

\theoremstyle{definition}
\newtheorem{definition}[theorem]{Definition}

\newtheorem{remark}[theorem]{Remark}

\title[Unique continuation and reconstruction for the fractional Laplacian]{Reconstruction algorithms for the fractional Laplacian and applications to inverse problems}
\author{Ethan Rinaldo}
\address{Ethan Rinaldo, Université des Antilles, Laboratoire L.A.M.I.A., D\'epartement de Math\'ematiques et Informatique, Universit\'{e} des Antilles, Campus Fouillole, 97159 Pointe-\`a-Pitre, Guadeloupe, FWI}
\email{ethan.rinaldo@univ-antilles.fr}

    \author{Mahamadi Warma}
\address{Mahamadi Warma,  Department of Mathematical Sciences and The Center for Mathematics and Artificial Intelligence, George Mason University, Fairfax, VA 22030, USA}
\email{mwarma@gmu.edu}

\keywords{Fractional Laplacian, Elliptic fractional Robin problems, potential, Inverse problems, Kelvin transform, reconstruction algorithms, unique continuation}

\subjclass[2020]{35R11, 26A33, 35R30}

\begin{document}
        \begin{abstract}
            We introduce two reconstruction schemes that enable the recovery of a function in the entire Euclidean space $\mathbb{R}^n$ from local data $(u|_W, [(-\Delta)^s u]|_W)$, where $W$ is an arbitrarily small nonempty open subset of $\rn$ and $(-\Delta)^s$ denotes the fractional Laplace operator of order $s\in (0,1)$. These procedures rely crucially on the weak Unique Continuation Property (UCP) for the fractional Laplacian.
We apply these schemes to two distinct inverse problems. Following the seminal work from Ghosh et al., the first one concerns the recovery of a potential (Calderón-type problem) from the fractional Schrödinger equation under nonlocal Robin-type exterior conditions. The second one involves recovering the solution of the space-fractional heat equation in $\mathbb{R}^n$ from localized time-dependent measurements within a ball.
To tackle these problems, we introduce new analytical tools such as a generalized weak Kelvin transform and a fractional Robin-to-Robin map. Finally, we provide numerical simulations for one of the reconstruction methods, illustrating the stability issues and the severe ill-posedness inherent to such inverse problems.
        \end{abstract}
        
			\maketitle
			\section{Introduction}
            In this paper we present two novel reconstruction algorithms that allow us to recover a function $u$ from the knowledge of the couple $(u|_U, [\lfrac u]|_U)$ where $U\subset\rn$ ($n\ge 1$) is an arbitrarily small nonempty open set and $\lfrac$ is the fractional Laplace operator defined as
            $$
                 \mathcal S(\rn) \rightarrow L^2(\rn) ,\qquad 
                 u  \mapsto \mathcal F^{-1}(|\xi|^{2s}\mathcal F(u)),
            $$
where $\mathcal S(\rn)$ denotes the Schwartz space, $\mathcal F$ is the Fourier transform and $\mathcal F^{-1}$ is the inverse Fourier transform. We shall give a more rigorous definition in Section \ref{Func-Set}. 

On the theoretical side, recent works from Ghosh, Salo \& Uhlmann \cite{GSU} have shown that the fractional Laplacian enjoys a unique continuation property (UCP), that is, if a function $u$ and its fractional Laplacian $\lfrac u$ both vanish on the same open subset of $\rn$, then $u$ is identically zero in the whole space $\rn$. This is a very distinct feature of the fractional Laplacian that does not apply to its local counterpart, the Laplace operator, and it illustrates the rigidity and nonlocality of the operator. Ghosh, R\"uland, Salo \& Uhlmann \cite{GRSU} used this property to prove uniqueness results in the context of the fractional Calder\'on problem. This seminal work laid the path for many others in the context of fractional inverse problems such as \cite{bhattacharyya2021inverse,cekic2020calderon,covi2020UCP,CalderonParabolic, railoZimmerman2023,RulandAnisotropic2023,SaloReview2017}, to cite only a few.

In the present work, we want to emphasize that an alternative and equivalent way to consider the UCP is that a function $u$ in $\rn$ is uniquely determined by the knowledge of the couple $(u, \lfrac u)$ in a bounded open subset of $\rn$. Thus, the question becomes. Is there a way to effectively reconstruct $u$ in the whole space $\rn$ from this couple?

This is precisely what we achieve here, proposing two distinct reconstruction procedures that allow us to recover a function from local fractional data. The first one, inspired by the original proof of the UCP for Riesz Potentials (\cite{riesz}), features the Kelvin transform, while the second one makes use of explicit formulas of Green functions for the fractional Poisson equation in the ball. To the best of our knowledge these results are new and may have fruitful applications in the context of fractional inverse problems. We give first some applications of the algorithms that we came across during the redaction of this article. In this research, we are interested in the resolution of a variant of the fractional Calder\'on problem where the exterior condition is a nonlocal Robin condition. Basically, we seek to identify the potential $q$ in the system
\begin{equation}\label{q_robin_frac}
				\begin{cases}
					\lfrac u+ qu  &= 0 \quad \text{in } \Omega \\	
					\nu_s (u) + \theta u &= f \quad \text{in } \ext:=\rn\backslash  \Omega,
				\end{cases}
\end{equation}
where $\nu_s$ would be a nonlocal counterpart of the normal derivative. Along the way, we come up with two different approaches to this problem. One where $\nu_s$ is represented by the nonlocal normal derivative $\ns$ (see e.g.  Dipierro, Ros-Oton \& Valdinoci \cite{dipierro}) given by
$$\ns u(x)=C_{n,s}\int_{\Omega}\frac{u(x)-u(y)}{|x-y|^{n+2s}}\;dy,\qquad x\in \ext,$$
and a second approach where $\nu_s$ is represented by the Dirichlet-to-Neumann map $\Lambda_q u := [\lfrac u]|_{\ext}$ (see e.g. \cite{GSU}). In both perspectives we propose a different version of a nonlocal Robin-to-Robin map as the measurable data, inspired by the operator introduced by Païvarinta \& Zubeldia \cite{paivarinta} in the local case of the Laplace operator. Both approaches also feature the UCP as their main tool  of resolution. The first case is treated in Section \ref{Inv-Pro} while the second one is treated in Section \ref{Sec-RAA} as an application of the reconstruction algorithms we propose.

Recently, there has been a growing interest in fractional operators due to their relevance in a wide spectrum of applications. There are several situations where a fractional equation gives a significantly better description than a classical local partial differential equation of the problem one wants to analyze. The fractional Laplace operator and its variants  usually describe anomalous diffusion. Examples in which fractional operators appear are models in turbulence (Bakunin \cite{bakunin2008}), population dynamics (De Roos \& Persson \cite{de2002}), image processing (Gilboa \& Osher \cite{gilboa2008}), laser design (Longhi \cite{longhi2015}), and porous media flow (Vazquez \cite{vazquez2012}). In addition, a number of stochastic models associated with fractional operators have been introduced in the literature to explain anomalous diffusion. Among them we quote the fractional Brownian motion, the continuous time random walk, the L\'evy flights, the Schneider gray Brownian motion, and more generally, random walk models based on evolution equations of single and distributed fractional order in  space (see e.g. Dubkov, Spagnolo \& Uchaikin \cite{dubkov2008}, Gorenflo, Mainardi \& Vivoli \cite{gorenflo2007}, Mandelbro \& Van Ness \cite{mandelbrot1968}, Schneider \cite{schneider1990}). In general, a fractional diffusion operator corresponds to a diverging jump length variance in the random walk. Finally, we can refer to Antil \& Rautenberg \cite{antil2019} and Weiss, van Bloemen \& Antil \cite{weiss2018} for the relevance of fractional operators in geophysics and imaging science.

            The rest of the paper is structured as follows. In Section \ref{Func-Set} we introduce the fractional order Sobolev spaces needed to study our problem and give rigorous definitions of the fractional Laplace operator and the associated nonlocal normal derivative. We also recall some integrations by parts formulas that are needed throughout the article. Other qualitative properties of the fractional Laplacian are also given. Our real work starts in Section \ref{Inv-Pro}. In Section \ref{two-sol} we introduce our two notions of solutions to the Robin problem \eqref{q_robin_frac} with $\nu_s = \ns$, prove their existence, uniqueness and regularity. In Section \ref{conv-DP} we prove two convergence results of solutions of the Robin problem to solutions of the Dirichlet problem. In Section \ref{Rob-to-Rob} we solve the inverse problem associated with \eqref{q_robin_frac}, that is, we recover the potential $q$ by using the Robin-to-Robin map for our two notions of weak solutions. In Section \ref{Kel-T} we introduce a weak version of the Kelvin transform, 
            and prove several properties of this weak version that will help us in the proof of the first reconstruction algorithm. Section \ref{Sec-RAA} is devoted to the reconstruction algorithms. We firstly use the introduced weak Kelvin transform and then secondly we make use of Green functions in the ball. Applications to the space-fractional heat equation and the fractional inverse Robin problem are given. Numerical illustrations that confirm our theoretical findings are also presented in Section \ref{Sec-Numerical}.

   \section{Functional settings}\label{Func-Set}
            We introduce the functional setting needed to study the inverse problems in Sections 3 and 5. We recall the definition of a few classical fractional order Sobolev spaces and introduce some new ones. We then proceed to define the fractional Laplace operator and give some associated results that will play an important role in solving the inverse problems. 
            Throughout the rest of the paper, without any mention, if $\Omega\subset\mathbb R^n$ ($n\ge 1$) is an open set, then we denote $\Omega_e:=\mathbb R^n\setminus\Omega$.
            Also, in the remaining part of this section, $\Omega\subset\mathbb R^n$ is an arbitrary nonempty open set and $0<s<1$ is a real number. In each result we shall specify if a regularity on the open set $\Omega$ is needed.

            \subsection{Fractional order function spaces} 
            We first recall the definition of Hölder continuous spaces. Let $j\geq 0$ be an integer and $\lambda \in (0, 1)$. Then, we let
            $$
            C^{j,\lambda}(\Omega) := \left \{ u\in C^j(\Omega) : \max_{0\leq |\alpha| \leq j} \sup_{x \in \Omega} |D^\alpha u(x)| + \max_{0\leq |\alpha| \leq j} \sup_{x \in \Omega} \frac{|D^\alpha u(x) - D^\alpha u(y)|}{|x-y|^\lambda} < \infty \right  \},
            $$
            and we endow it with the well-known natural norm. 
            
            We also recall the definition of the fractional order Sobolev space 
                $$
                H^{s}(\Omega) := \left \{u \in L^2(\Omega) : \int_\Omega \int_\Omega \sfracf{|u(x)-u(y)|^2}\,dy\,dx < \infty \right \}
                $$
                that we endow with the norm given by
                $$\|u\|_{H^{s}(\Omega)}:=\left(\int_{\Omega}|u|^2\;dx+\int_\Omega \int_\Omega \sfracf{|u(x)-u(y)|^2}\,dy\,dx\right)^{1/2}.$$
                It is well-known that $H^{s}(\Omega)$ is a Hilbert space and we have the continuous embedding 
                \begin{equation}\label{CE}
                H^{s}(\Omega)\hookrightarrow L^2(\Omega).
                \end{equation}
                Moreover, if $\Omega$ is assumed to be bounded and has a Lipschitz continuous boundary, then the continuous  embedding \eqref{CE} is also compact.

                We also define the spaces
                $H_0^s(\Omega):=\overline{\mathcal D(\Omega)}^{H^s(\Omega)},$
                where $\mathcal D(\Omega)$ denotes the space of all infinitely continuously differentiable functions with compact support in $\Omega$,
                and 
                $$\widetilde{H}_0^s(\Omega):=\left\{u\in H^s(\rn):\; u=0\mbox{ a.e. in } \rn\setminus\Omega\right\}=\{u\in H^s(\rn):\;\text{supp}(u)\subset \overline\Omega\}.$$
                 The dual spaces of $H_0^s(\Omega)$ and $\widetilde H_0^s(\Omega)$ are denoted by $H^{-s}(\Omega)$ and $\widetilde{H}^{-s}(\Omega)$, respectively.



\begin{definition}\label{ext-prop}
We say that $\Omega$ has the $H^s$-extension property, if there exists an extension operator $E_s:H^{s}(\Omega)\to H^{s}(\rn)$ such that $(E_su)|_{\Omega}=u$, for every $u\in H^{s}(\Omega)$.
\end{definition}

It is well-known (see e.g. Grisvard \cite[Chapter 2]{Gris}) that in this case, there is a constant $C=C(n,s,\Omega)>0$ such that 
\begin{equation}\label{ext-P}
    \|E_su\|_{H^{s}(\rn)}\le C\|u\|_{H^{s}(\Omega)}.
\end{equation}
It is also known that if $\Omega$ has a Lipschitz continuous boundary, then it has the $H^{s}$-extension property in the sense of Definition \ref{ext-prop} (see e.g. Di Nezza, Palatucci \& Valdinoci \cite[Theorem 5.4]{NPV} or \cite[Chapter 2]{Gris}).

Next, giving $g \in L^1(\ext)$, Dipierro, Ros-Oton \& Valdinoci \cite{dipierro} introduced the Sobolev type space 
$$ W^{s,2}_{g,\Omega} := \left \{ u: \rn \rightarrow \mathbb R : \E(u,u) + \int_{\Omega} u^2\, dx + \int_{\ext} |g|u^2\, dx < \infty \right \}
                    $$
                    where $\E(\cdot, \cdot)$ is the bilinear form given by
        \begin{equation}\label{form-E}
                \E(u,v) :=\frac{C_{n,s}}{2} \int\int_{(\rn\times \rn) \backslash \ext^2} \sfracf{(u(x) - u(y))(v(x)-v(y))} \, dy\;dx,
    \end{equation}
    and the normalization constant $C_{n,s}$ is given by
\begin{equation}\label{CN}
C_{n,s}:=\frac{s2^{2s}\Gamma\left(\frac{2s+n}{2}\right)}{\pi^{\frac
n2}\Gamma(1-s)},
\end{equation}
and $\Gamma$ is the usual Euler Gamma function (see e.g. Bjorland, Caffarelli \& Figalli \cite{BCF}, Caffarelli \& Silvestre \cite{Caf3}, Caffarelli, Roquejoffre \& Sire \cite{Caf1}, Caffarelli, Salsa \& Silvestre \cite{Caf2}, \cite{NPV}, and Warma \cite{War-DN1,War,warma2018approximate}).
                It has been shown in \cite[Proposition 3.1]{dipierro} that  $ W^{s,2}_{g,\Omega}$ endowed with the norm
                $$\|u\|_{ W^{s,2}_{g,\Omega}}:=\left( \E(u,u) + \int_{\Omega} u^2\, dx + \int_{\ext} |g|u^2\, dx\right)^{1/2}$$
                is a Hilbert space. This space has been used to study the fractional homogenous Neumann and Robin problems (see e.g. Claus \& Warma \cite{claus_warma} and \cite{dipierro}). We adapt it here for our own needs.  If $g=0$, then we only denote $W^{s,2}_{\Omega}:=W^{s,2}_{0,\Omega}$.
                We observe that functions in $W^{s,2}_{g,\Omega}$ do not necessary belong to $L^2(\rn)$. For that reason, we introduce the space
                    \begin{equation*}
                    V := \left \{ u \in L^2(\rn) : \int\int_{(\rn\times \rn) \backslash \ext^2}\sfracf{|u(x)-u(y)|^2}\;dy\;dx < \infty \right \},
                    \end{equation*}
                and we endow it with the norm given by
                \begin{equation}\label{norm-V}
                \|u\|_V = \left ( \|u\|_{L^2(\rn)}^2 + \E(u,u) \right )^{1/2}.
                \end{equation}
\begin{lemma}\label{lem-V}
	The space $V$ endowed with the inner product
            $$
           (u,v)_V :=  \E(u,v) + \int_{\rn} uv\, dx,
            $$
   with associated norm given in \eqref{norm-V} is a Hilbert space.
\end{lemma}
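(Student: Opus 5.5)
The proof has two parts: first that $(\cdot,\cdot)_V$ is an inner product inducing the norm \eqref{norm-V}, and then that $V$ is complete.

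For the inner product, note that $\E(u,v)$ is finite for $u,v\in V$ by the Cauchy--Schwarz inequality applied to the measure $\frac{C_{n,s}}{2}|x-y|^{-n-2s}\,dy\,dx$ on $(\rn\times\rn)\setminus\ext^2$. Bilinearity and symmetry are clear. Since $\E(u,u)\ge 0$, we have $(u,u)_V\ge \|u\|_{L^2(\rn)}^2$, which gives definiteness. Moreover $(u,u)_V=\|u\|_V^2$ by definition.

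Completeness is the main step. I would follow the standard argument for Sobolev--Slobodeckij spaces (compare \cite[Proposition 3.1]{dipierro}).

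Let $(u_k)$ be a Cauchy sequence in $V$. Because $\|\cdot\|_{L^2(\rn)}\le\|\cdot\|_V$, the sequence is Cauchy in $L^2(\rn)$, so $u_k\to u$ in $L^2(\rn)$. Passing to a subsequence, $u_{k_j}\to u$ a.e. in $\rn$.

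Set
$$U_k(x,y):=\frac{u_k(x)-u_k(y)}{|x-y|^{\frac{n+2s}{2}}}$$
on $D:=(\rn\times\rn)\setminus\ext^2$. Then $\E(u_k-u_m,u_k-u_m)=\frac{C_{n,s}}{2}\|U_k-U_m\|^2_{L^2(D)}$, so $(U_k)$ is Cauchy in $L^2(D)$ and converges there to some $U$. Along a further subsequence, $U_{k_j}\to U$ a.e. in $D$.

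At the same time, $U_{k_j}(x,y)\to \frac{u(x)-u(y)}{|x-y|^{(n+2s)/2}}$ for a.e. $(x,y)$, because the null set where pointwise convergence fails, $N\subset\rn$, yields the set $(N\times\rn)\cup(\rn\times N)$, which has measure zero in $\rn\times\rn$. Hence $U=\frac{u(x)-u(y)}{|x-y|^{(n+2s)/2}}$ a.e. in $D$. This shows $u\in V$, and also $\E(u_k-u,u_k-u)\to0$.

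Combined with the $L^2$ convergence, this gives $u_k\to u$ in $V$ for the chosen subsequence. Since the original sequence is Cauchy, the whole sequence converges.

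The only delicate point is this identification of the $L^2(D)$-limit $U$ with the difference quotient of $u$, i.e., the null-set argument above. Alternatively one can use Fatou's lemma: for fixed $k$,
$$\E(u_k-u,u_k-u)\le\liminf_j \E(u_k-u_{k_j},u_k-u_{k_j}),$$
which is small for large $k$. This gives both $u\in V$ and convergence in $V$.
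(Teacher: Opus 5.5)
Your proof is correct, and it takes a slightly different route from the paper's.

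The paper writes $V=L^2(\rn)\cap W^{s,2}_{\Omega}$ and observes that a Cauchy sequence in $V$ is Cauchy in both spaces. It takes the limit $\tilde v$ in $L^2(\rn)$ and the limit $v$ in $W^{s,2}_{\Omega}$ given by the completeness result of \cite{dipierro}, which, as the paper notes, is also an a.e.\ limit. It then identifies $v=\tilde v$ through a.e.\ convergent subsequences.

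You use only the completeness of $L^2(\rn)$ and of $L^2\big((\rn\times\rn)\setminus\ext^2\big)$. The $L^2(\rn)$-limit $u$ is your candidate, and your null-set argument (or the Fatou variant) shows that $u$ has finite energy and that $\E(u_k-u,u_k-u)\to 0$.

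Your version is self-contained and makes clear why completeness is easy here. Elements of $V$ are controlled in $L^2$ on all of $\rn$, so the candidate limit is available everywhere at once. You therefore never need the finer completeness theorem for $W^{s,2}_{\Omega}$, which has to build the limit on $\ext$ without any $L^2$ control there, nor the extra fact that its limit is an a.e.\ limit.

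The paper's version is shorter given the citation. Its decomposition as an intersection is also reused verbatim in the next lemma to get completeness of $\wtt=V\cap W^{s,2}_{\rho,\Omega}$.
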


\begin{proof}
	We observe that 
    $V = L^2(\rn) \cap W^{s,2}_{\Omega}$.
    Thus, if $u\in V$, then, 
	\begin{equation}
		\|u\|_V = 0 \implies \|u\|_{L^2(\rn)} = 0\implies u = 0 \:  \text{ a.e. in } \rn.
	\end{equation}
	Let $(u_k)$ be a Cauchy sequence in $V$. Then, it is also a Cauchy sequence in $L^2(\rn)$ and in $W^{s,2}_{\Omega}$. In order to prove the completeness of $V$ one only needs to show that both limits are the same.  
	As it has been shown in \cite{dipierro}, as $k\to\infty$, the sequence $(u_k)$ converges a.e. and in the sense of the norm $\|\cdot \|_{W^{s,2}_{\Omega}}$ towards a function $v$ defined in $\rn$. Since $(u_k)$ is a Cauchy sequence in $L^2(\rn)$, it converges to some function $\tilde v \in L^2(\rn)$, as $k\to\infty$. Therefore, after a subsequence if necessary, we have that $(u_{k})$ converges a.e. to $\tilde v$, as $k\to\infty$. The uniqueness of the limit implies that $v = \tilde v \in L^2(\rn)$. 
\end{proof}

Next, let $\theta_0 \in \mathbb R^*_+:=(0,\infty)$ and $\rho \in L^1(\ext)$ with $\rho \geq 0$ a.e. in $\ext$. Let us set $\theta := \rho + \theta_0 \in L^1_{\rm loc}(\ext)$.  We introduce the space 
                \begin{equation}\label{esp=W}
                    \wtt  := \left \{ u:\rn \rightarrow \mathbb R\text{ measurable such that } \|u\|_{\wtt} < \infty \right \}, 
                    \end{equation}
            where
            \begin{equation}\label{norm-W}
            \|u\|_{\wtt} := \left(\E(u,u) + \|u\|_{L^2(\Omega)}^2 + \int_{\ext} |\theta|u^2 \;dx\right)^{\frac 12}.
            \end{equation}
            
            We have the following result.
            
            \begin{lemma}
             The space $W_\theta$ endowed with the inner product
            $$
           (u,v)_{\wtt} :=  \E(u,v) + \int_{\Omega} uv\, dx + \int_{\ext} |\theta|uv \, dx,
            $$
            with associated norm given in \eqref{norm-W} is a Hilbert space.
            \end{lemma}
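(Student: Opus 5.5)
The plan is to reduce the statement to the Dipierro–Ros-Oton–Valdinoci result \cite[Proposition 3.1]{dipierro}, which says that $W^{s,2}_{g,\Omega}$ is a Hilbert space for every $g\in L^1(\ext)$, and then to handle the part of $\theta$ that is not integrable. Write $\theta=\rho+\theta_0$ with $\rho\in L^1(\ext)$, $\rho\ge 0$, and $\theta_0>0$. Then $|\theta|=\theta\ge\theta_0>0$ on $\ext$, and we have the set identity
$$\wtt=W^{s,2}_{\rho,\Omega}\cap\Big\{u:\int_{\ext}u^2\,dx<\infty\Big\}.$$

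First, $(\cdot,\cdot)_{\wtt}$ is bilinear and symmetric, and every term is finite by Cauchy–Schwarz: apply it to $\E$ viewed as an $L^2$ pairing on $(\rn\times\rn)\setminus\ext^2$ with weight $|x-y|^{-n-2s}$, and to the weighted $L^2$ pairings on $\Omega$ and $\ext$. For definiteness, suppose $\|u\|_{\wtt}=0$. Then $u=0$ a.e. in $\Omega$, and $\int_{\ext}\theta u^2=0$ with $\theta\ge\theta_0>0$, so $u=0$ a.e. in $\ext$. Hence $u=0$ a.e. in $\rn$; this is exactly where $\theta_0>0$ is used.

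For completeness, let $(u_k)$ be Cauchy in $\wtt$. Since $\rho\le\theta$, it is Cauchy in $W^{s,2}_{\rho,\Omega}$. By \cite[Proposition 3.1]{dipierro}, and following the same argument as in Lemma \ref{lem-V}, a subsequence converges a.e. in $\rn$, and the whole sequence converges in $W^{s,2}_{\rho,\Omega}$, to some measurable $u$. It remains to show $u\in\wtt$ and $\|u_k-u\|_{\wtt}\to0$, which I would do directly by Fatou's lemma. Fix $\varepsilon>0$ and $N$ with $\|u_k-u_m\|_{\wtt}<\varepsilon$ for $k,m\ge N$. For $k\ge N$, let $m\to\infty$ along the a.e.-convergent subsequence. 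The integrands
$$\frac{|(u_k-u_m)(x)-(u_k-u_m)(y)|^2}{|x-y|^{n+2s}},\qquad |u_k-u_m|^2,\qquad \theta|u_k-u_m|^2$$
converge a.e. on their respective domains to the same expressions with $u_m$ replaced by $u$. Fatou's lemma then gives $\|u_k-u\|_{\wtt}\le\varepsilon$. In particular $u=u_N-(u_N-u)\in\wtt$, since $\|\cdot\|_{\wtt}$ is a seminorm satisfying the triangle inequality, and $u_k\to u$ in $\wtt$.

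The only delicate point is the a.e. convergence of a subsequence on all of $\rn$. On $\Omega$ it follows from $L^2(\Omega)$-convergence. On $\ext$ it follows from $L^2(\ext,\theta\,dx)$-convergence, because $\theta\ge\theta_0>0$ makes this ordinary $L^2(\ext)$-convergence. So one does not even need the full strength of \cite{dipierro}: extracting a diagonal subsequence converging a.e. on $\Omega$ and on $\ext$ suffices. The Fatou argument then yields completeness self-containedly, and the main obstacle reduces to bookkeeping that the domain $(\rn\times\rn)\setminus\ext^2$ is handled correctly in the Fatou step, which is immediate since a.e. convergence in $x$ and $y$ separately implies a.e. convergence of $(x,y)$ on the product.
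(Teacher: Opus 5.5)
Your proof is correct, but it proves completeness by a different route from the paper.

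The paper notes that $\|\cdot\|_{\wtt}$ is equivalent to $\bigl(\E(u,u)+\|u\|_{L^2(\rn)}^2+\int_{\ext}\rho u^2\,dx\bigr)^{1/2}$, so $\wtt=V\cap W^{s,2}_{\rho,\Omega}$. It then argues modularly. A Cauchy sequence in $\wtt$ is Cauchy in $V$, which is complete by Lemma \ref{lem-V}, and in $W^{s,2}_{\rho,\Omega}$, which is complete by \cite[Proposition 3.1]{dipierro}. The two limits coincide because both are a.e. limits of subsequences.

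You use the intersection structure only to extract an a.e.-convergent subsequence. This comes from $L^2(\Omega)$-convergence and $L^2(\ext)$-convergence, the latter because $\theta\ge\theta_0$. You then run a Fatou argument directly on the three integrals defining $\|u_k-u_m\|_{\wtt}^2$. This gives both $u\in\wtt$ and $\|u_k-u\|_{\wtt}\to 0$ at once.

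The paper's version is shorter, given the earlier results. However, it leaves two things implicit: the positive-definiteness check, and the fact that convergence in each factor space yields convergence in the $\wtt$-norm (this follows from the norm equivalence). Your version is self-contained and makes explicit where $\theta_0>0$ enters. It also never uses $\rho\in L^1(\ext)$. So it proves the statement for any measurable $\theta\ge\theta_0>0$ on $\ext$, and, as you note yourself, your initial appeal to \cite{dipierro} is superfluous.
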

            
            \begin{proof}
                We proceed as in the proof of Lemma \ref{lem-V}. Since $\theta = \rho + \theta_0$, it follows that the norm on $\wtt$ is equivalent to the norm
                $$
               \left( \E(u,u) + \int_{\rn} u^2 \, dx + \int_{\ext} \rho u^2\, dx\right)^{1/2}.
                $$
                Observe that the norm $ \theta_0\|u\|^2_{L^2(\ext)}+ \|u\|^2_{L^2(\Omega)}$ is equivalent to the norm $\|u\|_{L^2(\mathbb R^n)}^2$. This implies that
                $\wtt = V \cap W^{s,2}_{\rho,\Omega}$.
                As before, taking a Cauchy sequence $(u_k)\subset \wtt$, we get that $(u_k)$ is also a Cauchy sequence in $V$ and in $W^{s,2}_{\rho,\Omega}$. From both convergences, we can extract subsequences, if necessary, converging a.e. The uniqueness of the limit ensures that the limit is within the intersection of $V$ and $W^{s,2}_{\rho,\Omega}$. The completeness of $\wtt$ follows immediately.
            \end{proof}

To the best of our knowledge, it is the first time that the spaces $V$ and $W_\theta$ have been introduced.     
For more information about fractional order Sobolev spaces we refer the interested readers to \cite{NPV,Gris,dipierro,War}, and their references. Other variants of fractional order Sobolev spaces have been very recently introduced by Chill \& Warma \cite{Ch-Wa}.

            In order to simplify the redaction, we further denote the set of admissible parameters for the space $\wtt$ as follows: 
            \begin{equation}\label{Lambda}
            \Lambda_1 := \left \{ \theta \in L^1_{\rm loc}(\ext) : \exists \alpha > 0,\; \theta-\alpha \in L^1(\ext) \text{ and } \theta-\alpha>0  \text{ a.e. in } \ext \right \}.
            \end{equation}
        
\subsection{The Fractional Laplace operator and its properties}
For $0<s<1$ we set 
\begin{equation*}
\mathcal{L}_s^{1}(\mathbb R^n):=\left\{u:\mathbb R^n\rightarrow
\mathbb{R}\;\mbox{
measurable and }\;\int_{\mathbb R^n}\frac{|u(x)|}{(1+|x|)^{n+2s}}\;dx<\infty \right\} .
\end{equation*}%
For $u\in \mathcal{L}_s^{1}(\mathbb R^n)$ and $\varepsilon >0$, we let
\begin{equation*}
(-\Delta )_{\varepsilon }^{s}u(x):=C_{n,s}\int_{\{y\in \mathbb R^n: |y-x|>\varepsilon \}}
\frac{u(x)-u(y)}{|x-y|^{n+2s}}\;dy,\;\;x\in\mathbb R^n,
\end{equation*}%
where the normalization constant $C_{n,s}$ is given by \eqref{CN}.
 Then,  the fractional Laplacian 
$(-\Delta )^{s}$ is defined for $u\in \mathcal{L}_s^{1}(\mathbb R^n)$  by the formula
\begin{equation}
(-\Delta )^{s}u(x)=C_{n,s}\mbox{P.V.}\int_{\mathbb R^n}\frac{u(x)-u(y)}{|x-y|^{n+2s}}\;dy 
=\lim_{\varepsilon \downarrow 0}(-\Delta )_{\varepsilon
}^{s}u(x),\;\;x\in\mathbb R^n,\label{eq11}
\end{equation}%
provided that the limit exists for a.e. $x\in\mathbb R^n$ and P.V. stands for the Cauchy Principal Value.
We refer to  \cite{NPV} and the references therein for the class of functions on which the limit in \eqref{eq11} exists. It is important to mention that the fractional Laplacian has a proper inverse, called the Riesz potential and defined for $1<p<\frac{n}{2s}$ by 
\begin{equation*}
     I_{2s} : L^p(\rn)  \rightarrow  L^{\frac{np}{n-2sp}}(\rn),  \quad
       I_{2s} u(x)  :=  \int_{\rn}\frac{u(x)}{|x-y|^{n-2s}}\;dy. 
\end{equation*}

Next, following the notation of Silvestre \cite{silvestre_2006}, we define the space
            $$\overline{\mathcal S}_{2s}(\mathbb R^n):=\left\{f\in C^\infty(\rn):\;\;(1+|x|^{n+2s})f^{(k)}(x) \mbox{ is bounded for every } k\ge 0\right\}.$$

    We have the following result which proof can be found in \cite{silvestre_2006}.
            
            \begin{lemma}
                Let $f$ be in the well-known Schwartz space $\mathcal S(\mathbb R^n)$. Then, $\lfrac f \in \overline{\mathcal S}_{2s}(\mathbb R^n)$.
            \end{lemma}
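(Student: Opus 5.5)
To show that $\lfrac f\in\overline{\mathcal S}_{2s}(\rn)$ for $f\in\mathcal S(\rn)$, I would establish smoothness, handle the derivatives by commuting them with $\lfrac$, and then prove a uniform decay estimate of order $|x|^{-n-2s}$ on the region $|x|\ge 1$.

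\emph{Smoothness and derivatives.} I would use the symmetrized form
$$\lfrac f(x)=\frac{C_{n,s}}{2}\int_{\rn}\frac{2f(x)-f(x+y)-f(x-y)}{|y|^{n+2s}}\,dy,$$
which agrees with the principal value in \eqref{eq11}. The integrand is bounded by $\|D^2 f\|_\infty|y|^{2-n-2s}$ for $|y|\le 1$ and by $4\|f\|_\infty|y|^{-n-2s}$ for $|y|>1$. Both majorants are integrable, so the integral converges absolutely. Applying the same bounds to $D^\alpha f$, which is again Schwartz, dominated convergence lets us differentiate under the integral. This gives $\lfrac f\in C^\infty(\rn)$ with $D^\alpha\lfrac f=\lfrac(D^\alpha f)$. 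Boundedness of each derivative follows from the same two majorants. Since $D^\alpha f\in\mathcal S(\rn)$, it then suffices to prove, for an arbitrary $g\in\mathcal S(\rn)$, that
$$|\lfrac g(x)|\le C(1+|x|)^{-n-2s}.$$

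\emph{Decay for $|x|\ge 1$.} Split the integral in \eqref{eq11}, written as $C_{n,s}\,\mathrm{P.V.}\int\frac{g(x)-g(y)}{|x-y|^{n+2s}}dy$, into three regions.

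\textbf{Region 1: $|y-x|<|x|/2$.} Use the symmetrized second-difference form. Here the relevant second derivatives are evaluated at points of norm $\ge|x|/2$, so they are bounded by $C_N|x|^{-N}$. The contribution is therefore at most $C_N|x|^{-N}\int_{|z|<|x|/2}|z|^{2-n-2s}dz\lesssim|x|^{2-2s-N}$, which is negligible once $N$ is large.

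\textbf{Region 2: $|y-x|\ge|x|/2$, term $g(x)$.} This term is bounded by $|g(x)|\int_{|z|\ge|x|/2}|z|^{-n-2s}dz\lesssim|g(x)||x|^{-2s}$, which decays rapidly.

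\textbf{Region 3: $|y-x|\ge|x|/2$, term $g(y)$.} This gives $\int_{|x-y|\ge|x|/2}\frac{|g(y)|}{|x-y|^{n+2s}}dy\le (|x|/2)^{-n-2s}\|g\|_{L^1}$. This term is exactly what produces the sharp rate $|x|^{-n-2s}$, and it cannot be improved in general because $\int g\neq 0$ typically.

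For $|x|\le 1$, the bound follows from the global boundedness established in the first step. Multiplying by $1+|x|^{n+2s}$ then yields boundedness for every derivative.

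\emph{Main obstacle.} The delicate point is Region 1. The principal value must be controlled through the symmetric second difference, and for this one has to check that the cancellation of the first-order terms is legitimate on the symmetric ball $|z|<|x|/2$ centered at $x$. It also requires that $x\pm z$ stay at distance $\ge|x|/2$ from the origin, so that the rapid decay of $D^2g$ can be used there.
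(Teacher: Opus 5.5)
Your proof is correct. Commuting $D^\alpha$ with $\lfrac$ through the second-difference representation reduces everything to the bound $|\lfrac g(x)|\le C(1+|x|)^{-n-2s}$ for $g\in\mathcal S(\rn)$. Your three-region splitting for $|x|\ge 1$ establishes this bound, and the far-field term $\|g\|_{L^1}(|x|/2)^{-n-2s}$ produces the sharp rate. The paper gives no argument of its own here and simply refers to Silvestre \cite{silvestre_2006}, so your proof is a self-contained justification of that cited fact. Smoothness could also be read off from the Fourier side, since $\xi^\beta|\xi|^{2s}\mathcal F(f)\in L^1(\rn)$ for every multi-index $\beta$.
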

            
            The fractional Laplacian is a nonlocal operator and, as such, it comes with a few peculiar properties. Among them, one that retains our attention is the UCP. It can be stated as follows (see e.g. \cite{GRSU,GSU}).
			
			\begin{lemma}\label{lemma_unique_continuation}
				Let $u \in H^{-r}(\rn)$ with $r\in \mathbb R$. If $\lfrac u$ and $u$ vanish in the sense of $H^{r-2s}(\rn)$  in an open subset of $\rn$, then $u \equiv 0$ in $\rn$.
			\end{lemma}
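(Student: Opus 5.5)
We plan to prove Lemma \ref{lemma_unique_continuation} along the lines of Ghosh, Salo \& Uhlmann \cite{GSU}: localize the problem, use the Caffarelli--Silvestre extension to turn it into a degenerate local elliptic problem, and then apply a Carleman-type unique continuation result for that local problem. We first reduce to a model situation. By translation and scaling we may assume that the open set on which $u$ and $\lfrac u$ vanish contains the unit ball $B_1$. Since $u\in H^{-r}(\rn)$ has tempered growth, $\lfrac u=\mathcal F^{-1}(|\xi|^{2s}\hat u)$ is a well-defined tempered distribution in $H^{-r-2s}(\rn)$. The hypothesis, read in the distributional sense on $B_1$, then allows us to mollify: $u_\varepsilon=u*\varphi_\varepsilon$ is smooth and lies in every $H^{t}$ up to polynomial weights. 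Moreover, $u_\varepsilon$ and $\lfrac u_\varepsilon=(\lfrac u)*\varphi_\varepsilon$ vanish in $B_{1/2}$ for small $\varepsilon$, because convolution commutes with the Fourier multiplier. It therefore suffices to treat $u\in H^{t}(\rn)$ for large $t$ and let $\varepsilon\to0$ at the end, since $u_\varepsilon\equiv0$ for all small $\varepsilon$ forces $u=0$.

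The core step uses the Caffarelli--Silvestre extension \cite{Caf3}. Let $U(x,y)$, $y>0$, be the solution of $\nabla\cdot(y^{1-2s}\nabla U)=0$ in $\mathbb R^{n+1}_+$ with $U(\cdot,0)=u$, given via the Fourier side by a Bessel-type profile $\hat U(\xi,y)=\hat u(\xi)\,\phi(|\xi|y)$. Then $-\lim_{y\to0}y^{1-2s}\partial_yU=c_s\lfrac u$. On $B_{1/2}\times\{0\}$ both the Dirichlet and the weighted Neumann data of $U$ vanish. We reflect $U$ evenly across $y=0$ over $B_{1/2}$. Because both traces vanish, the reflected function $\tilde U$ is a weak solution of the degenerate equation $\nabla\cdot(|y|^{1-2s}\nabla\tilde U)=0$ in the whole ball $B^{n+1}_{1/2}$, and it vanishes on the codimension-one set $B_{1/2}\times\{0\}$.

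The main obstacle is the next step: showing that a solution of this $A_2$-weighted elliptic equation vanishing on a hyperplane piece must vanish identically near it. We will prove this with a Carleman estimate with weight $|(x,y)|^{-\tau}$, centered at a point of $B_{1/2}\times\{0\}$, adapted to the weight $|y|^{1-2s}$, as in R\"uland's boundary Carleman estimates. The estimate shows that $\tilde U$ vanishes to infinite order there. The weak unique continuation property for this equation, which holds since the operator is elliptic with smooth coefficients away from $y=0$ and $A_2$-admissible across it, then gives $\tilde U\equiv0$ in a neighborhood, hence $U\equiv0$ on an open subset of $\mathbb R^{n+1}_+$. Since the equation is real-analytic-elliptic for $y>0$, $U$ is real analytic there, and connectedness yields $U\equiv0$ on all of $\mathbb R^{n+1}_+$. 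Taking the trace $y\to0$ gives $u=0$. Finally we undo the mollification, which yields $u\equiv0$ in $\rn$.
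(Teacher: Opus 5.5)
Your overall route is the one behind the lemma as the paper uses it. The paper gives no proof of its own but quotes \cite{GSU,GRSU}, where the argument is mollification, the Caffarelli--Silvestre extension, unique continuation for the degenerate extended equation from the flat boundary (R\"uland's Carleman estimates), and then interior unique continuation in $y>0$. Your mollification step, the even reflection, the analyticity argument and the trace argument are fine.

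The problem is the step you single out as the main obstacle: as you state it, it is false. The function $\tilde U(x,y)=\mathrm{sign}(y)\,|y|^{2s}$ has $\nabla_x\tilde U=0$ and $|y|^{1-2s}\partial_y\tilde U\equiv 2s$. So it lies in $H^1_{\rm loc}$ with weight $|y|^{1-2s}$ and is a weak solution of $\nabla\cdot(|y|^{1-2s}\nabla\tilde U)=0$ in every ball. It vanishes on $\{y=0\}$, yet only to order $2s$. Hence no Carleman estimate for the reflected equation can produce infinite-order vanishing if it uses only the facts that $\tilde U$ solves the equation and $\tilde U=0$ on $B_{1/2}\times\{0\}$.

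What makes the claim true for your $\tilde U$ is its evenness, i.e.\ that the weighted Neumann trace $\lim_{y\to0}y^{1-2s}\partial_yU$ also vanishes on $B_{1/2}$, and this must enter the argument explicitly. A boundary Carleman argument of R\"uland's type works on the half ball and uses that both $U$ and $y^{1-2s}\partial_yU$ vanish on the flat part. For your mollified $u$ you can also see the infinite-order vanishing directly. The Bessel profile has the form $\phi(t)=A(t^2)+t^{2s}B(t^2)$ with $A,B$ entire, whence $U(\cdot,y)=\sum_{k<N}y^{2k}\bigl(a_k(-\Delta)^ku+b_k\,y^{2s}(-\Delta)^k(-\Delta)^su\bigr)+O(y^{2N})$ in $L^2(\mathbb R^n)$ for every $N$. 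Each term of the sum vanishes on $B_{1/2}$.

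Second, going from infinite-order vanishing at a point of $B_{1/2}\times\{0\}$ to vanishing in a neighbourhood is the \emph{strong} unique continuation property, not the weak one. Your justification for it also does not hold up. The coefficient $|y|^{1-2s}$ is not Lipschitz across $\{y=0\}$, so the classical strong unique continuation theorems do not apply there. Ellipticity plus an $A_2$ weight is not a sufficient condition either: in dimension at least three, unique continuation already fails for uniformly elliptic operators with H\"older continuous coefficients. What you need is the unique continuation theory specific to the weight $|y|^{1-2s}$, via R\"uland's Carleman estimates or the Almgren-type frequency function of Fall and Felli. This is precisely the nontrivial input that the cited proof imports. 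With these two corrections your sketch coincides with the standard proof.
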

           
            The modern proof of this result includes advanced tools related to the fractional Laplacian, as, e.g., the Caffarelli-Silvestre extension \cite{Caf3}. But  similar results already exist, as Riesz's work on potential theory made use of an operator called Kelvin transform. We dive deeper in this matter in Sections 4 and 5. An alternative way to see this very rigid property is that any function $u \in H^r(\rn), r\in \mathbb R$, is the only function that corresponds to the data $(u|_U, [\lfrac u] |_U)$, where $U\subset \rn$ is an arbitrary open set. This leads to the two following reconstruction results, an injectivity result and a reconstruction theorem that go hand in hand, and have been proved in \cite{GRSU}.

            \begin{lemma} \cite[Lemma 2.2]{GRSU}
                Let $\Omega \subset \rn$ be a bounded open set and $W \subset \rn$ a nonempty open set such that $\overline W \cap \overline \Omega = \emptyset$. Consider the operator 
                $$
                L: \widetilde H_0^s(\Omega) \rightarrow H^{-s}(W), \qquad u\mapsto [\lfrac u]|_{W}. 
                $$
                Then, L is a compact and an injective operator with dense range.
            \end{lemma}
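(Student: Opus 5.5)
The statement has three parts: $L$ is injective, compact, and has dense range. I would treat them separately, relying on the structure $\overline W\cap\overline\Omega=\emptyset$ and on the UCP of Lemma \ref{lemma_unique_continuation}.

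\emph{Injectivity.} Take $u\in\widetilde H_0^s(\Omega)$ with $Lu=0$. Then $\lfrac u=0$ in $W$. Moreover, $u=0$ in $\ext\supset W$ because $\operatorname{supp}u\subset\overline\Omega$. Since $u\in H^s(\rn)$, Lemma \ref{lemma_unique_continuation} applies (with $r=-s$) and gives $u\equiv0$.

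\emph{Compactness.} Because $\overline W$ and $\overline\Omega$ are disjoint closed sets and $\Omega$ is bounded, for $x\in W$ and $u$ supported in $\overline\Omega$ the principal value disappears. One has
$$[\lfrac u](x)=-C_{n,s}\int_\Omega \frac{u(y)}{|x-y|^{n+2s}}\,dy,\qquad x\in W.$$
This is first checked on $\mathcal D(\Omega)$ and then extended by density, since both sides are continuous in $u$ in the relevant topologies.

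The kernel is smooth in $x$ away from $\overline\Omega$, so $Lu$ is smooth. Compactness is easiest to prove locally. Let $W'\Subset W$ have positive distance to $\overline\Omega$. On $W'$ the kernel and its derivatives are bounded, so $u\mapsto Lu|_{W'}$ maps $L^2(\Omega)$ boundedly into $C^1(\overline{W'})$. Arzel\`a--Ascoli then makes it compact into $L^2(W')\hookrightarrow H^{-s}(W')$.

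To handle all of $W$, I would use a different factorization. Write $L$ as the bounded map $\lfrac:H^s(\rn)\to H^{-s}(\rn)$, then restriction to $W$. The obstacle is that near $\partial W$ the distance to $\Omega$ is still positive, since $\overline W\cap\overline\Omega=\emptyset$, but $W$ may be unbounded. If $W$ is bounded, the distance $d(W,\Omega)>0$ gives uniform kernel bounds on all of $W$, and the Arzel\`a--Ascoli argument works directly. If $W$ is unbounded, I would use the decay of the kernel as $|x|^{-n-2s}$. This makes the tails small uniformly on bounded sets of $u$, so $L$ is a norm limit of compact truncations $u\mapsto \chi_{B_R}Lu$, hence compact.

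\emph{Dense range.} By Hahn--Banach it suffices to show that any $v\in (H^{-s}(W))^*=\widetilde H_0^s(W)$ annihilating the range is zero. Let $v$ satisfy $\langle \lfrac u, v\rangle=0$ for all $u\in\widetilde H_0^s(\Omega)$. By symmetry of $\lfrac$ (valid for $u,v\in H^s(\rn)$ via the Fourier definition), this reads $\langle u,\lfrac v\rangle=0$ for all $u\in\widetilde H_0^s(\Omega)$, so $\lfrac v=0$ in $\Omega$. Also $v=0$ in $\Omega$, because $\operatorname{supp}v\subset\overline W$ and $\overline W\cap\Omega=\emptyset$. Lemma \ref{lemma_unique_continuation} then gives $v\equiv0$, so the range is dense.

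The main technical point is the rigorous identification of $(H^{-s}(W))^*$ with $\widetilde H_0^s(W)$ and the justification of the symmetry pairing; I expect that duality bookkeeping, rather than the UCP steps, to need the most care.
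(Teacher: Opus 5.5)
The paper does not prove this lemma but imports it from \cite{GRSU}. Your three-step argument is correct and is the standard proof: the UCP of Lemma \ref{lemma_unique_continuation} for injectivity, the smooth kernel $-C_{n,s}|x-y|^{-n-2s}$ on $W\times\Omega$ with its $|x|^{-n-2s}$ decay for compactness, and Hahn--Banach with the symmetry of $\lfrac$ and the UCP for density. Two small adjustments follow.

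First, verify the kernel formula by pairing with $\varphi\in\mathcal D(W)$ and using Fubini, rather than by density of $\mathcal D(\Omega)$, since $\mathcal D(\Omega)$ need not be dense in the paper's $\widetilde H_0^s(\Omega)$ when $\partial\Omega$ is rough.

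Second, the duality bookkeeping you flag is harmless because $Lu\in L^2(W)$. An annihilator of the range therefore acts as some $v\in L^2(W)$ with $\int_W v\,L\psi\,dx=0$ for all $\psi\in\mathcal D(\Omega)$. Its zero extension $\tilde v\in L^2(\rn)$ then satisfies $\tilde v=0$ and $\lfrac \tilde v=0$ in $\Omega$, and Lemma \ref{lemma_unique_continuation} with $r=0$ gives $v=0$. This works whether $H^{-s}(W)$ is read as $(H_0^s(W))^*$, as in this paper, or as restrictions of $H^{-s}(\rn)$, as in \cite{GRSU}. It matters for the paper's convention: for $s=1/2$ the elements of $H_0^s(W)$ need not extend by zero to $H^s(\rn)$, so your symmetry step in $H^s(\rn)$ is not directly available there.
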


    \begin{theorem}\cite[Theorem 2]{GRSU}\label{thm_grsu_tikonov}
				Let $\Omega \subset \rn$ be a bounded open set and  $W$  a nonempty open set such that $\overline{W} \subset \ext$. Any function $v \in H^{s}(\rn)$ 
                with $\mathrm{supp}(v) \subset \overline \Omega$ is uniquely determined by the knowledge of $[\lfrac v]|_{W}$. The function $v$ can be reconstructed thanks to the formula
				$$
				v = \lim_{\alpha \rightarrow 0} v_\alpha \quad (\text{in } H^{s}(\rn)),
				$$
				where $v_\alpha$ ($\alpha>0$), is the unique solution of the minimization problem
				\begin{equation}\label{PT}
				v_\alpha = \mathrm{arg } \min_{w\in \widetilde H_0^s(\Omega)} \left [ \|[\lfrac w]|_{W} - [\lfrac v]|_{W}\|_{\widetilde H^{-s}(W)}^2 + \alpha \|w\|^2_{H^s(\rn)} \right ].
				\end{equation}
			\end{theorem}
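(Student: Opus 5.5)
Uniqueness follows directly from the UCP (Lemma \ref{lemma_unique_continuation}). Suppose $v_1,v_2\in\widetilde H_0^s(\Omega)$ have the same data on $W$, and set $w=v_1-v_2$. Since $\overline W\subset\ext$, we have $w=0$ on $W$. By assumption $[\lfrac w]|_W=0$ as well. The UCP then gives $w\equiv 0$.

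For the reconstruction I will treat \eqref{PT} as a standard Tikhonov regularization of the operator $L:\widetilde H_0^s(\Omega)\to \widetilde H^{-s}(W)$, $w\mapsto[\lfrac w]|_W$. This operator is bounded, and the same UCP argument shows it is injective. Write $f=Lv$ and
\[
J_\alpha(w)=\|Lw-f\|^2_{\widetilde H^{-s}(W)}+\alpha\|w\|^2_{H^s(\rn)}.
\]
The functional $J_\alpha$ is continuous, strictly convex and coercive on the Hilbert space $\widetilde H_0^s(\Omega)$. Hence it has a unique minimizer $v_\alpha$, characterized by the normal equation
\[
(L^*L+\alpha I)v_\alpha=L^*f=L^*Lv .
\]
Setting $e_\alpha=v_\alpha-v$, this becomes $(L^*L+\alpha I)e_\alpha=-\alpha v$, so
\[
e_\alpha=-\alpha(L^*L+\alpha I)^{-1}v .
\]

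To prove $e_\alpha\to0$, I use the spectral theorem for the nonnegative self-adjoint operator $A=L^*L$. We have $e_\alpha=-r_\alpha(A)v$ with $r_\alpha(\lambda)=\alpha/(\lambda+\alpha)$. The function $r_\alpha$ satisfies $|r_\alpha|\le1$, and $r_\alpha(\lambda)\to \mathbf 1_{\{\lambda=0\}}$ pointwise as $\alpha\to0$. By dominated convergence in the spectral measure of $v$, $r_\alpha(A)v$ converges to the projection of $v$ onto $\ker A$. Since $\ker A=\ker L=\{0\}$, this projection is zero, so $v_\alpha\to v$ in $H^s(\rn)$.

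A more elementary route avoids the spectral theorem. Comparing $J_\alpha(v_\alpha)\le J_\alpha(v)$ gives two facts: $\|v_\alpha\|_{H^s}\le\|v\|_{H^s}$, and $\|L(v_\alpha-v)\|^2\le\alpha\|v\|^2\to0$. From boundedness, a subsequence converges weakly to some $\tilde v$. Then $Lv_\alpha\rightharpoonup L\tilde v$, while $Lv_\alpha\to Lv$ strongly, so $L\tilde v=Lv$. Injectivity gives $\tilde v=v$, hence the whole family converges weakly to $v$. Combined with $\limsup\|v_\alpha\|\le\|v\|$, weak convergence upgrades to strong convergence in $H^s(\rn)$.

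The only delicate point is the functional-analytic setup, not the estimates. One must check that $L$ is well-defined and bounded into $\widetilde H^{-s}(W)$, i.e.\ that $\lfrac$ maps $H^s(\rn)$ into $H^{-s}(\rn)$ and restriction to $W$ is continuous. One must also check that the weak UCP applies to $w\in H^s(\rn)$. This is covered by Lemma \ref{lemma_unique_continuation} taking $r=-s$.
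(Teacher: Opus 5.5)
Your proof is correct. The paper gives no argument of its own for this statement; it only cites \cite{GRSU}. The surrounding text shows the intended route: the lemma stated just before it says $L$ is compact, injective and has dense range, and Theorem \ref{thm_reconstruction_fredholm} is treated in parallel via \cite[Theorems 4.13--4.15]{colton1998inverse}. So the paper's argument is to establish those three operator properties and then quote the textbook convergence theorem for Tikhonov regularization of compact injective operators, which is usually proved with the singular value decomposition.

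You instead prove the Tikhonov convergence directly. You do it either through the spectral calculus of $L^*L$ or through the bound $\|v_\alpha\|_{H^s(\rn)}\le\|v\|_{H^s(\rn)}$ combined with weak compactness and injectivity. Both versions need only that $L$ is bounded and injective, with no compactness and no dense range.

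This gives you a self-contained argument with slightly more reach. The cited compactness lemma is formulated under $\overline W\cap\overline\Omega=\emptyset$, whereas the statement only assumes $\overline W\subset\ext$, which allows $\overline W$ to touch $\partial\Omega$. Your argument uses only $W\cap\overline\Omega=\emptyset$, which follows because $W$ is open, so this mismatch does not affect you.

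The compact-operator framework buys something in return. It explains why regularization is unavoidable: $L$ is compact on an infinite-dimensional space, so its inverse on the range is unbounded. Through the textbook results it also gives convergence for noisy data under a parameter choice $\alpha=\alpha(\delta)$, which is the setting of the numerical section. The statement as posed concerns only exact data, and for that your argument is complete.
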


     In order to deal with the nonlocal fractional Robin condition, we next introduce a nonlocal analogue of the normal derivative (see e.g. \cite{dipierro}).  For a function $u\in W^{s,2}_{\Omega}$, the nonlocal operator $\mathcal N_s$ is defined as 
    \begin{equation}\label{op-Ns}
    \mathcal N_su(x)=\frac{C_{n,s}}{2}\int_\Omega\frac{u(x)-u(y)}{|x-y|^{n+2s}}\;dy,\qquad x\in\ext.
    \end{equation}

    We have the following integration by parts formula taken from  \cite[Proposition 2.6] {claus_warma}.
    
            \begin{proposition}\label{prop_parts_integration}
            Let $\Omega\subset\rn$ be a bounded open set.
	Let $u \in W^{s,2}_{\Omega}$ be such that $\lfrac u \in L^2(\Omega)$ and $\ns u \in L^2(\ext)$. Then, for all $v \in V$ we have that
	\begin{equation}\label{Int-Part}
		\int_\Omega v \lfrac u \,dx = \E(u,v) - \int_{\ext}v\ns u\,dx,
	\end{equation}
    where we recall that $\mathcal E$ is given in \eqref{form-E}.
\end{proposition}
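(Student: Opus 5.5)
Since $\ns$ in \eqref{op-Ns} carries the factor $C_{n,s}/2$, the constants are consistent with $\E$ in \eqref{form-E}. I would first prove \eqref{Int-Part} for nice $u$, where every integral converges absolutely and Fubini applies, and then pass to general $u$ and $v$ by approximation. The identity is an algebraic splitting of the double integral defining $\E(u,v)$ over $(\rn\times\rn)\setminus\ext^2 = (\Omega\times\Omega)\cup(\Omega\times\ext)\cup(\ext\times\Omega)$.

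\textbf{Regularized identity.} For fixed $\varepsilon>0$, I would compute
$$\int_\Omega v(x)(-\Delta)^s_\varepsilon u(x)\,dx = C_{n,s}\int_\Omega\int_{\{|x-y|>\varepsilon\}} v(x)\frac{u(x)-u(y)}{|x-y|^{n+2s}}\,dy\,dx.$$
Split the inner integral into $y\in\Omega$ and $y\in\ext$.

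On $\Omega\times\Omega$ the truncated kernel is symmetric. Symmetrizing in $(x,y)$ gives
$$\frac{C_{n,s}}{2}\iint_{\Omega\times\Omega,\,|x-y|>\varepsilon}\frac{(u(x)-u(y))(v(x)-v(y))}{|x-y|^{n+2s}}\,dy\,dx.$$

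On $\Omega\times\ext$, write $v(x)=(v(x)-v(y))+v(y)$. The first part, combined with the mirror region $\ext\times\Omega$, gives the cross terms of $\E(u,v)$. Indeed, the integrand $(u(x)-u(y))(v(x)-v(y))$ is symmetric, so
$$C_{n,s}\int_{\Omega\times\ext}=\frac{C_{n,s}}{2}\Big(\int_{\Omega\times\ext}+\int_{\ext\times\Omega}\Big).$$
The second part is
$$C_{n,s}\int_{\ext}v(y)\int_{\Omega,\,|x-y|>\varepsilon}\frac{u(x)-u(y)}{|x-y|^{n+2s}}\,dx\,dy = -2\int_{\ext}v(y)\,\mathcal N_s^\varepsilon u(y)\,dy.$$
Here $\mathcal N_s^\varepsilon$ denotes the truncated version of $\ns$.

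\textbf{Passage $\varepsilon\to0$.} The truncated $\E$-part converges to $\E(u,v)$ by dominated convergence, since $u,v\in W^{s,2}_\Omega$ and Cauchy--Schwarz gives an integrable majorant. The $\ns$-part converges to the corresponding term with $\ns u\in L^2(\ext)$ and $v\in L^2(\ext)$. On $\Omega$, $(-\Delta)^s_\varepsilon u\to\lfrac u$, which I would justify first for smooth $u$. For general $u$ the left side converges in the distributional/weak sense, given the hypothesis $\lfrac u\in L^2(\Omega)$.

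\textbf{Main obstacle: the factor in the boundary term.} The cross terms produce $-2\int_{\ext}v\ns u$, while \eqref{Int-Part} has coefficient $1$. I would recheck this against \cite[Proposition 2.6]{claus_warma}. With $\ns$ normalized by $C_{n,s}$ (as in the introduction) and $\E$ by $C_{n,s}/2$, the bookkeeping gives coefficient $1$. This suggests that formula was intended with that normalization, and I would fix conventions consistently before writing the proof.

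\textbf{Main obstacle: justifying the limit for general $u$.} The second difficulty is justifying the limit for general $u$ with only $\lfrac u\in L^2(\Omega)$. I would first try approximating $v\in V$ by $\mathcal D(\rn)$ functions in the $V$-norm. For $u$, I would use the weak definition: $\lfrac u|_\Omega$ is defined by duality against test functions in $\mathcal D(\Omega)$, and the identity for $v\in\mathcal D(\Omega)$ then extends by density and continuity of each term in $v$.
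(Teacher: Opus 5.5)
\textbf{The normalization remark and the truncated identity are right.} With \eqref{op-Ns} as printed (factor $C_{n,s}/2$), the cross terms give $-2\int_{\ext}v\,\ns u$. The normalization that makes \eqref{Int-Part} consistent is the one with $C_{n,s}$, which is what the paper actually uses in \eqref{int-E} and in the proof of Proposition~\ref{prop_parts_formula_distrib}. For fixed $\varepsilon$ every piece of your splitting is absolutely integrable, by Cauchy--Schwarz and $\int_{|x-y|>\varepsilon}|x-y|^{-n-2s}\,dy=c\,\varepsilon^{-2s}$. The paper gives no argument of its own; it only quotes \cite[Proposition 2.6]{claus_warma}.

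\textbf{The gap is the limit $\varepsilon\to0$ for general $u$.} For $y\in\ext$ with $\mathrm{dist}(y,\Omega)>\varepsilon$ one has $\mathcal N_s^\varepsilon u(y)=\ns u(y)$. On the layer $\{y\in\ext:\ \mathrm{dist}(y,\Omega)\le\varepsilon\}$, however, nothing controls $\mathcal N_s^\varepsilon u$, so $\ns u\in L^2(\ext)$ gives no majorant. Symmetrically, $\lfrac u\in L^2(\Omega)$ in $\mathcal D'(\Omega)$ says nothing about $(-\Delta)^s_\varepsilon u$ on $\{x\in\Omega:\ \mathrm{dist}(x,\partial\Omega)<\varepsilon\}$ once $v$ does not vanish near $\partial\Omega$; this already happens for $v\in\mathcal D(\rn)$. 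What must be shown is that the sum of these two boundary-layer contributions tends to $0$.

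Your density fallback cannot supply this. The $V$-closure of $\mathcal D(\Omega)$ consists of functions vanishing on $\ext$, for which the boundary term is identically zero, so that route never reaches $\int_{\ext}v\,\ns u$. The right reduction is to the functional $L(v):=\E(u,v)-\int_\Omega v\lfrac u\,dx-\int_{\ext}v\,\ns u\,dx$:
\begin{itemize}
\item $L$ is continuous on $V$;
\item $L$ vanishes on $\mathcal D(\Omega)$;
\item by the Fubini computation behind \eqref{int-E}, $L$ vanishes on functions supported in $\ext$ at positive distance from $\Omega$.
\end{itemize}
So \eqref{Int-Part} holds exactly when these last two families span a dense subspace of $V$. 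For Lipschitz $\Omega$ and $s<1/2$ this is true, and it is the missing step there. Zero extension $H^s(\Omega)\to H^s(\rn)$ is bounded, so $v\chi_\Omega\in\widetilde H_0^s(\Omega)$. Moreover $v\chi_{\ext}$ can be cut off near $\partial\Omega$ at vanishing $V$-cost, because $\E$ does not see $\ext\times\ext$.

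\textbf{For $s\in(1/2,1)$ no argument can close the gap, because the statement is false there.} For Lipschitz $\Omega$ the interior trace $v\mapsto (v|_\Omega)|_{\partial\Omega}$ is continuous from $V$ to $L^2(\partial\Omega)$, since $\|v|_\Omega\|_{H^s(\Omega)}\le C\|v\|_V$. Fix $0\neq g\in L^2(\partial\Omega)$ and let $u\in V$ be the Riesz representative in $V$ of this functional, i.e.
\[
\E(u,v)+(u,v)_{L^2(\rn)}=\int_{\partial\Omega}g\,v\,d\sigma\quad\text{for all } v\in V .
\]
\begin{itemize}
\item Testing with $v\in\mathcal D(\Omega)$ (zero trace, and $\int_{\rn}u\lfrac v\,dx=\E(u,v)$) gives $\lfrac u=-u\in L^2(\Omega)$ in $\mathcal D'(\Omega)$.
\item Testing with $v\in\mathcal D(\rn\setminus\overline\Omega)$ gives $\ns u=-u\in L^2(\ext)$, with the $C_{n,s}$ normalization.
\end{itemize}
So $u$ satisfies every hypothesis. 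Yet for $v\in V$ the right-hand side of \eqref{Int-Part} equals $\int_{\partial\Omega}g\,v\,d\sigma-\int_\Omega uv\,dx$, while the left-hand side is $-\int_\Omega uv\,dx$. These differ for suitable $v\in\mathcal D(\rn)$. A correct version therefore needs either extra hypotheses on $u$ near $\partial\Omega$ (for example the bounded $C^2$ setting of \cite{dipierro}) or the restriction $s<1/2$.
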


 The following result will be useful in the coming sections.

\begin{proposition}\label{prop_parts_formula_distrib}
	Let $u \in V$. Then, the  identity 
	\begin{equation*}
		\langle\lfrac u, \varphi\rangle_{\mathcal D'(\Omega), \mathcal D(\Omega)} = \E(u, \varphi), \, \forall \varphi \in \mathcal D(\Omega),
	\end{equation*}
    holds in the sense of distributions.
    
\end{proposition}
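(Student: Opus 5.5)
We first fix how $\lfrac u$ is understood for $u\in V$. Since $V\subset L^2(\rn)\subset \mathcal{L}^1_s(\rn)$, $\lfrac u$ is a distribution on $\rn$ via
$$\langle \lfrac u,\psi\rangle := \int_{\rn} u\,\lfrac\psi\,dx,\qquad \psi\in\mathcal D(\rn).$$
This is well defined because $\lfrac\psi\in\overline{\mathcal S}_{2s}(\rn)\subset L^2(\rn)$. We restrict this distribution to $\Omega$. The claim is then that, for $\varphi\in\mathcal D(\Omega)$, we have $\int_{\rn}u\lfrac\varphi\,dx=\E(u,\varphi)$. We would prove this first for smooth $u$ and then pass to the limit by density.

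\emph{Step 1: smooth case.} Take $u\in\mathcal D(\rn)$ and $\varphi\in\mathcal D(\Omega)$. The symmetric identity
$$\int_{\rn}u\lfrac\varphi\,dx=\frac{C_{n,s}}{2}\int_{\rn}\int_{\rn}\frac{(u(x)-u(y))(\varphi(x)-\varphi(y))}{|x-y|^{n+2s}}\,dy\,dx$$
is classical (Plancherel, or \cite{NPV}). If $(x,y)\in\ext\times\ext$, then $\varphi(x)=\varphi(y)=0$. Hence the integrand vanishes on $\ext^2$, and the double integral over $\rn\times\rn$ equals the integral over $(\rn\times\rn)\setminus\ext^2$, which is exactly $\E(u,\varphi)$.

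\emph{Step 2: density and continuity.} Both sides should be continuous in $u$ for the $V$-norm, with $\varphi$ fixed. On the left, $|\int u\lfrac\varphi|\le \|u\|_{L^2(\rn)}\|\lfrac\varphi\|_{L^2(\rn)}$. On the right, Cauchy--Schwarz gives $|\E(u,\varphi)|\le \E(u,u)^{1/2}\E(\varphi,\varphi)^{1/2}$, and $\E(\varphi,\varphi)\le \tfrac{C_{n,s}}2[\varphi]^2_{H^s(\rn)}<\infty$. It then suffices to approximate $u\in V$ by functions $u_k\in\mathcal D(\rn)$ with $u_k\to u$ in $V$.

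\emph{Step 3: avoiding the density question.} This density is the main obstacle, since $V$ only controls interactions touching $\Omega$ and $\Omega$ is arbitrary. To sidestep it, we would work directly with $u\in V$ and $\varphi\in\mathcal D(\Omega)$, using the truncated operator. For each $\varepsilon>0$,
$$\int_{\rn}u\,(-\Delta)^s_\varepsilon\varphi\,dx=\frac{C_{n,s}}{2}\iint_{|x-y|>\varepsilon}\frac{(u(x)-u(y))(\varphi(x)-\varphi(y))}{|x-y|^{n+2s}}\,dy\,dx.$$
This follows by Fubini and symmetrization. Fubini is justified because, on $\{|x-y|>\varepsilon\}$, the kernel is integrable: $u\in L^2$, $\varphi$ is bounded with compact support, and $|x-y|^{-n-2s}\mathbf 1_{|x-y|>\varepsilon}\in L^1\cap L^\infty$.

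The right-hand side again lives on $(\rn\times\rn)\setminus\ext^2$. As $\varepsilon\downarrow0$ it converges to $\E(u,\varphi)$ by dominated convergence, with dominating function given by the integrable product from Cauchy--Schwarz. On the left, $(-\Delta)^s_\varepsilon\varphi\to\lfrac\varphi$ in $L^2(\rn)$. This holds uniformly on compacts by the second-order Taylor estimate, and away from $\operatorname{supp}\varphi$ the tails are dominated by $C(1+|x|)^{-n-2s}$. Hence $\int u(-\Delta)^s_\varepsilon\varphi\to\int u\lfrac\varphi$, which gives the identity with no density argument.
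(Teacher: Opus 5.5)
Your proof is correct; Step 3 alone is a complete argument, and Steps 1--2 (with the density question left open) can simply be deleted. It differs from the paper's proof as follows.

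The paper splits $\int_{\rn}u\lfrac\varphi\,dx$ into the integrals over $\Omega$ and $\ext$. It then applies the Claus--Warma integration-by-parts formula (Proposition \ref{prop_parts_integration}), with $\varphi$ in the role of the solution and $u\in V$ as the test function. This gives
\[
\int_\Omega u\lfrac\varphi\,dx=\E(u,\varphi)-\int_{\ext}u\,\ns\varphi\,dx .
\]
Finally, it observes that for $x\in\ext$ both $\ns\varphi(x)$ and $\lfrac\varphi(x)$ equal $-C_{n,s}\int_\Omega\varphi(y)|x-y|^{-n-2s}\,dy$, so the two exterior terms cancel.

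That route is shorter once the cited formula is available, but it inherits that formula's hypotheses:
\begin{itemize}
\item $\Omega$ must be bounded, whereas the proposition is stated for an arbitrary open $\Omega$.
\item One must check $\lfrac\varphi\in L^2(\Omega)$ and $\ns\varphi\in L^2(\ext)$.
\item The cancellation requires $\ns$ to carry the constant $C_{n,s}$, as the paper's computation uses. With the $C_{n,s}/2$ appearing in \eqref{op-Ns}, the cancellation fails.
\end{itemize}

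Your truncation--Fubini--symmetrization argument effectively reproves the needed integration by parts directly in this special case. It uses only three facts: $u\in L^2(\rn)$, finiteness of $\E(u,u)$, and the $L^2$-convergence $(-\Delta)^s_\varepsilon\varphi\to\lfrac\varphi$. The last is even simpler than you state: the difference is bounded by $C\varepsilon^{2-2s}$ and vanishes at distance greater than $\varepsilon$ from the support of $\varphi$. Consequently your argument holds for any open $\Omega$, is self-contained, and never involves $\ns$ or its normalization.
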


\begin{proof}
	The duality pairing $\langle \cdot, \cdot \rangle_{\mathcal D'(\Omega), \mathcal D(\Omega)}$ in the distributional sense $\mathcal D'(\Omega)$ is given by
	\begin{equation*}
		\langle\lfrac u, \varphi\rangle_{\mathcal D'(\Omega), \mathcal D(\Omega)} := \int_{\rn} u \lfrac \varphi\,dx,\qquad \varphi\in\mathcal D(\Omega).
	\end{equation*}
	Splitting the domain of the integral between $\Omega$ and $\ext$ and applying Proposition \ref{prop_parts_integration}, we obtain that
	\begin{align*}
		\langle\lfrac u, \varphi\rangle_{\mathcal D'(\Omega), \mathcal D(\Omega)} = &\E(u, \varphi) - \int_{\ext} u \ns \varphi\,dx + \int_{\ext} u \lfrac \varphi\,dx\notag\\
		 =&\E(u, \varphi) - C_{n,s}\int_{\ext}u(x) \int_{\Omega}\sfracf{-\varphi(y)}\,dy\;dx \notag\\
        &+C_{n,s} \int_{\ext}u(x) \text{ P.V.} \int_{\Omega}\sfracf{\varphi(x)-\varphi(y)}\,dy\;dx.
	\end{align*}
	Observe that for the last integral, the Cauchy principal value is unnecessary as $x \in \ext$ and $y\in \Omega$. Thus,
	\begin{align*}
		\langle\lfrac u, \varphi\rangle_{\mathcal D'(\Omega), \mathcal D(\Omega)} =& \E(u, \varphi) + C_{n,s}\int_{\ext}u(x) \int_{\Omega}\frac{\varphi(y)}{|x-y|^{n+2s}}\,dy\;dx \notag\\ 
        &- C_{n,s}\int_{\ext}u(x) \int_{\Omega}\frac{\varphi(y)}{|x-y|^{n+2s}}\,dy\;dx\notag\\
		 =& \E(u,\varphi),
	\end{align*}
 for all $\varphi\in \mathcal D(\Omega)$.
\end{proof}

\section{Direct and inverse fractional Robin problems}\label{Inv-Pro}

    Here, we are interested in solving both the direct and inverse problems associated to the fractional Schr\"odinger equation with the nonlocal Robin exterior conditions. First, we discuss the existence, uniqueness and regularity of solutions to the Robin problem
    \begin{equation}\label{eq_robin_frac}
            \begin{cases}
                \lfrac u+ qu  = 0 \quad &\text{ in } \Omega ,\\	
                \ns u + \theta u = \phi \quad & \text{ in } \ext,
            \end{cases}
        \end{equation}
     where here, $\theta$ is a function and not a constant. For that purpose we introduce two notions of weak solutions to our problem which nature depends on the space where the function $\theta$ belongs. 
     We then proceed to tackle the inverse problem of determining the potential $q$ from exterior measurements. Our results follow closely those from  \cite{GRSU} for the Dirichlet exterior condition, but we build a new nonlocal Robin-to-Robin map inspired by the Robin-to-Robin map featured in \cite{paivarinta} for the local case of the Laplace operator. 

     It is important to mention that we are proposing an alternative way to deal with the inverse Robin problem in Section 5, with a Dirichlet-to-Robin map instead of a Robin-to-Robin map.

    \subsection{ $L^{\infty}$ and $L^{1}$-weak solutions}\label{two-sol}
    We make the following assumption on the potential $q$.

    \begin{assumption}\label{Ass-q}
    The function $q\in L^\infty(\Omega)$ and there is a constant $q_0>0$ such that $q\ge q_0>0$ a.e. in $\Omega$.
    \end{assumption}
    
     Next, we introduce our two notions of weak solutions.
    
    \begin{definition}[\bf $L^\infty$-weak solution]\label{def_linf_sol}
         Let $\theta_0 \in \mathbb R^{*}_+$ and $\theta \in L^\infty(\ext)$ with $ \theta \ge  \theta_0$ a.e. in $\ext$ and let $\phi \in L^2(\ext)$. We say that $u \in V$ is an $L^\infty$-weak solution to the Robin problem \eqref{eq_robin_frac} if the equality
            $$
            \E(u,v) + (qu,v)_{L^2(\Omega)} + (\theta u,v)_{L^2(\ext)} =  (\phi, v)_{L^2(\ext)}, 
            $$
            holds for every $v\in V$.
        \end{definition}

        \begin{definition}[\bf $L^1$-weak solution]\label{def_l1_sol}
            Let $\rho \in L^1(\ext)$ be such that $\rho \geq 0$ a.e. in $\ext$, $\theta_0 \in \mathbb R^*_+$ and $\theta := \rho + \theta_0$. Let $\phi \in L^2(\ext)$. We say that $u \in \wtt$ is an $L^1$-weak solution to the Robin problem \eqref{eq_robin_frac} if the equality
            $$
            \E(u,v) + (qu,v)_{L^2(\Omega)} + (\theta u,v)_{L^2(\ext)} =  (\phi, v)_{L^2(\ext)},
            $$
            holds for every $v \in \wtt$.
        \end{definition}

        Observe that in Definition \ref{def_linf_sol}, $\theta\in L^\infty(\ext)$, while in Definition \ref{def_l1_sol},  $\theta\in L^1_{\rm loc}(\ext)$.
        
 The following theorem is the first main result of this section.
 
        \begin{theorem}\label{thm_linf_sol}
        Let $\theta_0 \in \mathbb R^{*}_+$ and $\theta \in L^\infty(\ext)$ with $\theta \ge  \theta_0$ a.e. in $\ext$. Let $\phi \in L^2(\ext)$ and let $q$ satisfy Assumption \ref{Ass-q}. Then, the Robin problem \eqref{eq_robin_frac} has a unique $L^\infty$-weak solution $u\in V$. Moreover, 
        \begin{equation}\label{eq-a.e.}
        \mathcal N_su+\theta u=\phi \;\mbox{ a.e. in } \ext,
        \end{equation}
        and $u$ is also a solution in the sense of the distributions, that is,
        \begin{equation}\label{SEQ}
        \langle (-\Delta)^su,\varphi\rangle_{\mathcal D'(\Omega),\mathcal D(\Omega)} +(qu,\varphi)_{L^2(\Omega)}=0,\qquad \forall\;\varphi\in \mathcal D(\Omega).
        \end{equation}
        \end{theorem}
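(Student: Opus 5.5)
Existence and uniqueness will come from the Lax--Milgram theorem on the Hilbert space $V$ of Lemma \ref{lem-V}. Define
$$
\mathcal A(u,v):=\E(u,v)+(qu,v)_{L^2(\Omega)}+(\theta u,v)_{L^2(\ext)},\qquad u,v\in V.
$$
Continuity follows from Cauchy--Schwarz on each term, using $q\in L^\infty(\Omega)$ and $\theta\in L^\infty(\ext)$:
$$
|\mathcal A(u,v)|\le \max\{1,\|q\|_\infty,\|\theta\|_\infty\}\,\|u\|_V\|v\|_V .
$$
Coercivity uses Assumption \ref{Ass-q} and $\theta\ge\theta_0$:
$$
\mathcal A(u,u)\ge \E(u,u)+q_0\|u\|^2_{L^2(\Omega)}+\theta_0\|u\|^2_{L^2(\ext)}\ge \min\{1,q_0,\theta_0\}\|u\|_V^2 .
$$
The right-hand side $v\mapsto(\phi,v)_{L^2(\ext)}$ is bounded on $V$ because $\|v\|_{L^2(\ext)}\le\|v\|_V$. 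Lax--Milgram then gives a unique $u\in V$ satisfying Definition \ref{def_linf_sol}.

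For the distributional equation \eqref{SEQ}, test the weak formulation with $v=\varphi\in\mathcal D(\Omega)\subset V$. The exterior terms vanish since $\varphi=0$ in $\ext$, leaving $\E(u,\varphi)+(qu,\varphi)_{L^2(\Omega)}=0$. Proposition \ref{prop_parts_formula_distrib} identifies $\E(u,\varphi)$ with $\langle\lfrac u,\varphi\rangle$, which yields \eqref{SEQ}. As a consequence, $\lfrac u=-qu\in L^2(\Omega)$ in $\mathcal D'(\Omega)$.

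For the exterior condition \eqref{eq-a.e.}, test with $v\in V$ vanishing in $\Omega$, for instance $v=\psi\in\mathcal D(\ext^\circ)$, or more generally $v\in L^2(\rn)$ with $v=0$ in $\Omega$, which lies in $V$. The integrand of $\E(u,v)$ only sees pairs with at least one point in $\Omega$. By symmetry of the domain $(\rn\times\rn)\setminus\ext^2$,
$$
\E(u,v)=C_{n,s}\int_{\ext}v(x)\int_\Omega\frac{u(x)-u(y)}{|x-y|^{n+2s}}\,dy\,dx=2\int_{\ext}v\,\ns u\,dx,
$$
up to the normalisation in \eqref{op-Ns}. The weak formulation then gives $\int_{\ext}v(c\,\ns u+\theta u-\phi)\,dx=0$ for all such $v$. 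This forces the a.e. identity, after reconciling the factor $C_{n,s}/2$ versus $C_{n,s}$, which I would fix consistently with Proposition \ref{prop_parts_integration}.

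The main obstacle is justifying this Fubini manipulation: we do not know a priori that $\ns u\in L^1_{\rm loc}(\ext)$ or that the double integral converges absolutely. I would first take $v$ bounded with compact support in $\ext$ at positive distance from $\Omega$. Then $|x-y|\ge\delta$, so the kernel is bounded, and $u\in L^2$ with $\Omega$ bounded gives absolute integrability. This yields the equation a.e. on every set $\{x\in\ext:\operatorname{dist}(x,\Omega)>\delta\}$, and letting $\delta\to0$ covers $\ext\setminus\partial\Omega$. Finally, $\partial\Omega$ is taken to have measure zero; otherwise I would use that $\ext$ is closed and restrict to its interior.
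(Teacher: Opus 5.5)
Your proposal is correct and follows the paper's own proof: Lax--Milgram on $V$ with the same continuity and coercivity bounds, then testing with $\varphi\in\mathcal D(\Omega)$ and invoking Proposition \ref{prop_parts_formula_distrib} for \eqref{SEQ}, then testing with functions supported in the exterior and interchanging the integrals to identify $\ns u$ for \eqref{eq-a.e.}. Your restriction to test functions at positive distance from $\Omega$, and your remark on the $C_{n,s}/2$ versus $C_{n,s}$ normalisation, only make rigorous what the paper does formally. Drop the aside claiming that every $v\in L^2(\rn)$ vanishing in $\Omega$ lies in $V$: it fails in general (e.g.\ $\chi_{(1,2)}\notin V$ for $\Omega=(-1,1)$ and $s\ge 1/2$), though you never actually use it.
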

        
        \begin{proof}
            We apply the classical Lax-Milgram Theorem. It is easy to see that there is a constant $C>0$ such that for every $u,v\in V$, 
            $$\Big|\E(u,v) + (qu,v)_{L^2(\Omega)} + (\theta u,v)_{L^2(\ext)}\Big|\le C\|u\|_V\|v\|_V.$$
           Since  $\theta$ is bounded from below we also get the coercivity. That is, there is a constant $C=C(n,s,q,\theta)>0$ such that for every $u\in V$, 
            $$\E(u,u) + \int_\Omega qu^2\, dx +  \int_{\ext} \theta u^2\;dx \geq \E(u,u) + \min(\theta_0, q_0)\|u\|_{L^2(\rn)}^2 \geq C\|u\|_V^2.$$
            It is easy to see that the functional $F:V\to\mathbb R$ given by $ F(v):=\int_{\ext}\phi v\;dx,$
            is linear and continuous. Hence, we have the existence and uniqueness of an $L^\infty$-weak solution $u$.  
           
           Next, we show \eqref{eq-a.e.}.
            We notice that
            	\begin{align*}
		\E(u,\varphi) = &\frac{C_{n,s}}{2}\int_{\Omega}\int_{\Omega} \sfracf{(u(x)-u(y))(\varphi(x)-\varphi(y))}\,dy\;dx \,
        \\ &+C_{n,s}\int_{\Omega}\int_{\ext} \sfracf{(u(x)-u(y))(\varphi(x)-\varphi(y))}\,dy\;dx.
	\end{align*}
            Thus, taking $\varphi \in \mathcal D(\ext)$ we obtain that
            $$
            \E(u,\varphi) = C_{n,s}\int_{\Omega}\int_{\ext} \sfracf{(u(x)-u(y))(-\varphi(y))}\,dy\;dx.
            $$
            Interchanging the order of the integrals we obtain that
            \begin{equation}\label{int-E}
            \E(u,\varphi) = C_{n,s}\int_{\ext} \varphi(x)\int_{\Omega} \sfracf{u(x)-u(y)}\,dy\;dx=\E(u,\varphi) =  \int_{\ext}\varphi(x) \ns u(x)\,dx.
            \end{equation}
            We can conclude that, if $u$ is an $L^\infty$-weak solution of  \eqref{eq_robin_frac}, then \eqref{eq-a.e.} holds.
            One may notice that this also proves that $\ns u \in L^2(\ext)$, since $\theta u$ and $\phi$ belong to $L^2(\ext)$. 
            
            Finally, regarding the Schr\"odinger equation inside $\Omega$, we take a test function $\varphi \in \mathcal D(\Omega)$. Applying Proposition \ref{prop_parts_formula_distrib} we get that $\langle\lfrac u, \varphi\rangle_{\mathcal D'(\Omega), \mathcal D(\Omega)} = \E(u,\varphi)$, and
            we can deduce that \eqref{SEQ} holds.
        \end{proof}

        Our second main result of this section is the following.

        \begin{theorem}\label{thm_l1_sol}
        Let $\rho \in L^1(\ext)$ be such that $\rho \geq 0$ a.e. in $\ext$, $\theta_0 \in \mathbb R^*_+$, $\theta := \rho + \theta_0$, $\phi \in L^2(\ext)$,  and let $q$ satisfy Assumption \ref{Ass-q}.
        Then, the Robin problem \eqref{eq_robin_frac} has a unique $L^1$-weak solution $u\in \wtt$. Moreover, $\ns u+\theta u=\phi$ a.e. in $\ext$, and  the $L^1$-weak solution is also a solution in the sense of distributions.
        \end{theorem}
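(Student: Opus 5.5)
The plan mirrors the proof of Theorem \ref{thm_linf_sol}, with $V$ replaced by the Hilbert space $\wtt$ and the Lax--Milgram theorem applied to
$$a(u,v):=\E(u,v)+(qu,v)_{L^2(\Omega)}+\int_{\ext}\theta uv\,dx,\qquad F(v):=\int_{\ext}\phi v\,dx.$$

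\textbf{Continuity of $a$.} Since $\theta=\rho+\theta_0>0$, we have $|\theta|=\theta$. The Cauchy--Schwarz inequality applied to each of the three terms gives
$$|a(u,v)|\le \max(1,\|q\|_{L^\infty(\Omega)})\|u\|_{\wtt}\|v\|_{\wtt}.$$
Note that $\int_{\ext}\theta uv\,dx$ is finite precisely because of the weight $|\theta|$ built into the norm \eqref{norm-W}. This is why the $L^1$ setting needs $\wtt$ instead of $V$.

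\textbf{Coercivity of $a$.} We have
$$a(u,u)=\E(u,u)+\int_\Omega qu^2\,dx+\int_{\ext}\theta u^2\,dx\ge \E(u,u)+\min(q_0,1)\|u\|^2_{L^2(\Omega)}+\int_{\ext}|\theta|u^2\,dx,$$
which is at least $\min(q_0,1)\|u\|_{\wtt}^2$.

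\textbf{Continuity of $F$.} Since $\theta\ge\theta_0$,
$$|F(v)|\le \|\phi\|_{L^2(\ext)}\|v\|_{L^2(\ext)}\le \theta_0^{-1/2}\|\phi\|_{L^2(\ext)}\Big(\int_{\ext}\theta v^2\,dx\Big)^{1/2}\le \theta_0^{-1/2}\|\phi\|_{L^2(\ext)}\|v\|_{\wtt}.$$
Lax--Milgram then yields a unique $L^1$-weak solution $u\in\wtt$.

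\textbf{Exterior condition.} We test with $\varphi\in\mathcal D(\ext)$; since $\varphi$ is bounded with compact support, $\varphi\in\wtt$. The computation \eqref{int-E} is purely algebraic and gives $\E(u,\varphi)=\int_{\ext}\varphi\,\ns u\,dx$. The interchange of integrals is justified because $u\in\wtt$ makes $\int_\Omega\int_{\ext}|u(x)-u(y)|^2|x-y|^{-n-2s}$ finite, and $\varphi$ vanishes away from a compact subset of the open set $\ext$. Hence $\int_{\ext}(\ns u+\theta u-\phi)\varphi\,dx=0$ for all $\varphi\in\mathcal D(\ext)$. Here $\theta u\in L^1_{\rm loc}(\ext)$, since $\theta u^2\in L^1$ and $\theta\in L^1_{\rm loc}$, and $\ns u\in L^1_{\rm loc}$ by the computation above. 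The fundamental lemma of the calculus of variations then gives $\ns u+\theta u=\phi$ a.e. in $\ext$.

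\textbf{Interior equation.} This is where I expect the main obstacle: Proposition \ref{prop_parts_formula_distrib} is stated for $u\in V$, and a function in $\wtt$ need not lie in $L^2(\rn)$ in general. However, here $\theta\ge\theta_0>0$ gives
$$\|u\|^2_{L^2(\ext)}\le\theta_0^{-1}\int_{\ext}\theta u^2\,dx,$$
so $\wtt\subset V$; this is exactly the identity $\wtt=V\cap W^{s,2}_{\rho,\Omega}$ noted earlier. Thus Proposition \ref{prop_parts_formula_distrib} applies. Taking $\varphi\in\mathcal D(\Omega)\subset\wtt$ in the weak formulation, the exterior term vanishes and we obtain $\E(u,\varphi)+(qu,\varphi)_{L^2(\Omega)}=0$. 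This is exactly \eqref{SEQ}, so $u$ is a distributional solution.

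One technical point remains: Proposition \ref{prop_parts_formula_distrib} itself invokes Proposition \ref{prop_parts_integration}, which needs $\ns\varphi\in L^2(\ext)$ and $\lfrac\varphi\in L^2(\Omega)$. Both hold for $\varphi\in\mathcal D(\Omega)$ with $\Omega$ bounded, since then $\ns\varphi$ decays like $|x|^{-n-2s}$ at infinity.
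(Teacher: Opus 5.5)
Your proposal is correct and follows the paper's proof. It applies Lax--Milgram on $W_\theta$ with the same bilinear form and functional, then derives the exterior identity via \eqref{int-E} and the distributional interior equation via Proposition \ref{prop_parts_formula_distrib}, as in Theorem \ref{thm_linf_sol}. Your added details — that $W_\theta\subset V$ because $\theta\ge\theta_0$, the Fubini and $L^1_{\rm loc}$ justifications, and the hypotheses of Proposition \ref{prop_parts_integration} for $\varphi\in\mathcal D(\Omega)$ — are what the paper leaves implicit when it says ``the rest of the proof follows'' as before.
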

        
        \begin{proof}
            As before, we use the Lax-Milgram Theorem. There is a constant $C=C(q,\theta)>0$ such that for every $u,v\in\wtt$, 
            $$
            \begin{aligned}
                \Big |\E(u,v) + (qu,v)_{L^2(\Omega)} + (\theta u,v)_{L^2(\ext)} \Big | \leq& \|u\|_{V}\|v\|_{V} + \|q\|_{L^\infty(\Omega)}\|u\|_{L^2(\Omega)} \|v\|_{L^2(\Omega)} \\
                &+  \int_{\ext}\left|\theta_0uv\right|\;dx + \int_{\ext}\left|\rho uv\right| \, dx \\
                \leq&  C\|u\|_{\wtt} \|v\|_{\wtt}.
            \end{aligned}
            $$
            Regarding the coercivity we have that for every $u\in\wtt$,
            $$
            \begin{aligned}
                \E(u,u) + (qu,u)_{L^2(\Omega)} + (\theta u,u)_{L^2(\ext)} &\geq \E(u,u) + q_0\|u\|_{L^2(\Omega)}^2 + \int_{\ext} \theta u^2 \, dx 
                 \geq \min(1, q_0, \theta_0) \|u\|_{\wtt}^2.
            \end{aligned}
            $$
            Also the functional $F: W_\theta\to\mathbb R$ given by
            $F(v)=\int_{\ext}\phi v\;dx$
            is linear and continuous.
            We have shown the existence and the uniqueness of an $L^1$-weak solution $u$. The rest of the proof follows as the proof of  Theorem \ref{thm_linf_sol}.  
        \end{proof}

        \begin{remark}
            We observe that, if $\theta \in L^\infty(\ext)$, then every $L^1$-weak solution is also an $L^\infty$-weak solution. This follows from the fact that, in that case, $\wtt \subset V$.
        \end{remark}
        
        \subsection{Convergence to the Dirichlet problem}\label{conv-DP}
            Considering only the weak formulations, the Dirichlet problem, as stated for example in \cite{GRSU}, and the Robin problem, seem unrelated. Especially as Robin solutions belong to $V$ or subspaces of $V$, and Dirichlet solutions belong to $H^s(\rn)$. However, both solve the Schr\"odinger equation in the sense of distributions in $\Omega$. We show that these problems are related in a stronger sense.

            First, we recall the notion of solutions to the Dirichlet fractional Schr\"odinger equation.

             \begin{definition}
             Let $\Omega\subset\rn$ be a bounded domain with a Lipschitz continuous boundary.
             Given $\phi\in H^s(\ext)$, we consider the Dirichlet problem
               \begin{equation}\label{Dir-P}
                \begin{cases}
					\lfrac u+ qu  &= 0 \quad \text{ in  } \Omega \\	
					u &= \phi \quad \text{ in  } \ext.
				\end{cases}
                \end{equation}
                A function $u_d \in H^s(\rn)$ is said to be a weak solution 
                of  \eqref{Dir-P} if
                \begin{equation*}
                \int_{\rn}(-\Delta)^{s/2} u_d(-\Delta)^{s/2} w\;dx + \int_{\Omega}qu_dw\;dx = 0, \quad \forall w \in  \widetilde H_0^s(\Omega),
                \end{equation*}
                and $u_d-\Phi \in \widetilde H_0^s(\Omega)$, where $\Phi\in H^s(\rn)$ is an extension of $\phi$ in the sense of Definition \ref{ext-prop}.
                \end{definition}

                \begin{remark}
                We observe the following.
                \begin{enumerate}
                \item[(a)] Since we have assumed that $\Omega$ is a domain and has a Lipschitz continuous boundary, then it is well-known (see e.g. \cite[Chapter 2]{Gris}) that $\ext$ has the $H^s$-extension property.
                \item[(b)] If $\phi \in \widetilde H_0^s(\ext)$, then no regularity assumption on $\Omega$ is needed.
                \item[(c)] From the condition $u_d-\Phi \in \widetilde H_0^s(\Omega)$, letting $v:=u_d-\Phi$ we have that $ v\in \widetilde H_0^s(\Omega)$  and
                \begin{equation*}
                  \int_{\rn}(-\Delta)^{s/2} v(-\Delta)^{s/2}w\;dx + \int_{\Omega}qvw\;dx = -\int_{\rn}(-\Delta)^{s/2} \Phi (-\Delta)^{s/2} w\;dx -\int_{\Omega}q\Phi w\;dx,
                \end{equation*}
                 for every $w \in \widetilde H_0^s(\Omega)$.
                Letting the functional $F:\widetilde H_0^s(\Omega)\to\mathbb R$ be given by
                $$F(w):=-\int_{\rn}(-\Delta)^{s/2} \Phi (-\Delta)^{s/2} w\;dx -\int_{\Omega}q\Phi w\;dx,$$
                we have that $F\in  \widetilde 
                H^{-s}(\Omega)$ and the function $v$ is naturally a weak solution of the Dirichlet problem with source term
                \begin{equation}\label{DPST}
                \begin{cases}
						\lfrac v+ qv  = F \quad &\text{ in } \Omega \\	
						v = 0 \quad &\text{ in } \ext.
					\end{cases}
                \end{equation}
               \item[(d)]  We do know that functions in $V$ are such that their restrictions to $\Omega$ belong to $H^s(\Omega)$. Thus, we aim to find out whether or not it is possible to make solutions of the Robin problem converge towards solutions of the homogeneous exterior Dirichlet problem in a somewhat natural way. In order to do so, we need to point out an existing relationship between the weak formulations of both problems.                

                \item[(e)] The bilinear form
                $$
                \int_{\rn}(-\Delta)^{s/2} u_d (-\Delta)^{s/2} w\;dx
                $$
                is actually equal to the bilinear form
                $$
                   \frac{C_{n,s}}{2} \int_{\rn}\int_{\rn} \sfracf{(u_d(x)-u_d(y))(w(x)-w(y))}\,dx\; dy.
                $$
                But, since $w \in \widetilde H_0^s(\Omega)$, we know that $\mathrm{supp }(w)\subset \overline \Omega$, and thus the domain of integration can be rewritten as $\mathbb 
                R^{2n} \backslash \ext^2$. This means that for $w \in \widetilde H_0^s(\Omega)$, the bilinear form coincides with $\E(u_d, w)$, where $\E$ is given in \eqref{form-E}.
                Therefore, $v$ solution of \eqref{DPST} satisfies the equality
                $$
                \E(v,w) + (qv,w)_{L^2(\Omega)} = \langle F,w\rangle_{\widetilde H^{-s}(\Omega), \widetilde H_0^s(\Omega)},\qquad \forall w \in \widetilde H_0^s(\Omega).
                $$  
                \end{enumerate}
                \end{remark}

                Before studying the convergence, we recall this well-known topological result. 

                \begin{theorem}\label{theo-FA}
                    Let $(x_k)_{k\in\mathbb N}$ be a sequence in a topological space $X$ such that every subsequence $(x_{k_j})$ of $(x_k)$  converges to $x \in X$, as $j\to\infty$. Then, the full sequence $(x_k)$ converges to x, as $k\to\infty$. 
                \end{theorem}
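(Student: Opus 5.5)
The plan is to read the hypothesis literally and observe that it already contains the conclusion. The full sequence $(x_k)$ is itself a subsequence of $(x_k)$: take the strictly increasing index map $k_j := j$. By hypothesis every subsequence converges to $x$, so in particular $(x_{k_j})_j = (x_k)_k$ converges to $x$. No topological property of $X$, not even the Hausdorff property, is used.

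Since the result will presumably be applied in the next subsection to sequences of Robin solutions, where one only extracts convergent sub-subsequences, I would also prove the form that is actually needed. That form says: if every subsequence $(x_{k_j})$ admits a further subsequence $(x_{k_{j_l}})$ converging to $x$, then $x_k\to x$.

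I would argue this by contradiction. Suppose $x_k\not\to x$. Then there is an open neighbourhood $U$ of $x$ such that $x_k\notin U$ for infinitely many $k$. Enumerate these indices increasingly as $k_1<k_2<\cdots$; this gives a subsequence $(x_{k_j})$ lying entirely in $X\setminus U$. By hypothesis it has a further subsequence converging to $x$, so that further subsequence eventually lies in $U$. This contradicts the fact that all its terms lie in $X\setminus U$.

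There is no real obstacle here. The only point needing care is the negation of convergence in a general topological space: I would phrase it through neighbourhoods rather than metrics, and extract the subsequence staying outside a fixed neighbourhood $U$. In the applications the topology will be the weak or strong topology of a Hilbert space such as $V$ or $H^s(\rn)$, and the argument applies verbatim there.
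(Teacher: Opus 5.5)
Your proof is correct. It has two parts, and they relate differently to the paper.

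\textbf{The literal statement.} Your observation that $(x_k)$ is its own subsequence (via $k_j:=j$) is a complete proof. It is genuinely different from, and shorter than, the paper's argument, which proceeds by contradiction.

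\textbf{The stronger criterion.} The paper's contradiction argument is essentially your second part. It picks a neighbourhood $U$ of $x$ that infinitely many terms avoid, extracts a subsequence lying outside $U$, and notes that no further subsequence of it can converge to $x$. So what the paper actually establishes is the sub-subsequence criterion. That is the form invoked in Theorem~\ref{conv-R-to-D}, where boundedness in $V$ only guarantees that every subsequence has a weakly convergent further subsequence.

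\textbf{What your version adds.} It makes explicit which hypothesis is really needed. It also gives a cleaner extraction: enumerating the infinitely many indices with $x_k\notin U$ in increasing order automatically yields a strictly increasing index sequence. The paper's ``for every $j$ there is $k_j>j$ with $x_{k_j}\notin U$'' needs an inductive choice to produce a genuine subsequence.
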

                
                \begin{proof}
                    Suppose by contradiction that $(x_k)$ does not converge to $x$. Then, there exists an open neighborhood $U$ of $x$ such that $\forall j > 0,\exists k_j > j$ such that $x_{k_j} \notin U$. Thus, taking $(x_{k_j})$ as a subsequence, we can see that no subsequence of $(x_{k_j})$ converges to $x$ since $(x_{k_j}) \cap U = \emptyset$, which contradicts the assumption.
                \end{proof}

Now, we are ready to state and prove the convergence results.

\begin{theorem}\label{conv-R-to-D}
Let $\theta \in \mathbb R^*_+$ and $F \in V^\star$, the dual space of $V$. 
Let $q$ satisfy Assumption \ref{Ass-q}.
Let $f\in L^2(\ext)$ and
                let $u_\theta\in V$ be the $L^\infty$-weak solution of the  nonhomogeneous exterior Robin problem with right-hand side,
                \begin{equation}\label{NHRP}
                \begin{cases}
					\lfrac u_\theta+ qu_\theta  &=  F \quad \text{in } \Omega, \\	
					\ns u_\theta + \theta u_\theta &= f \quad \text{in } \ext.
			\end{cases}
                \end{equation}
                 Then, $u_\theta$ converges weakly to a function $\tilde u\in \widetilde H_0^s(\Omega)$ and strongly in $L^2(\rn)$, as $\theta \to\infty$, and $\tilde u$ is the weak solution of the homogeneous exterior Dirichlet problem
                \begin{equation}\label{HDP}
                \begin{cases}
						\lfrac \tilde u+ q\tilde u  =  F \quad &\text{ in  } \Omega \\	
						\tilde u = 0 \quad  &\text{ in } \ext.
					\end{cases}
                \end{equation}
                \end{theorem}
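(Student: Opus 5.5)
We follow the standard penalization argument: uniform a priori bounds from the weak formulation, weak compactness, identification of the limit, and a subsequence argument closed off by Theorem \ref{theo-FA}. By Theorem \ref{thm_linf_sol} (with the source $F$ added to the right-hand side, which Lax--Milgram handles since $F\in V^\star$), $u_\theta\in V$ satisfies
\begin{equation*}
\E(u_\theta,v)+(qu_\theta,v)_{L^2(\Omega)}+\theta(u_\theta,v)_{L^2(\ext)}=\langle F,v\rangle_{V^\star,V}+(f,v)_{L^2(\ext)},\qquad \forall v\in V.
\end{equation*}

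\textbf{Uniform bounds.} Test with $v=u_\theta$ and take $\theta\ge 1$. The left side is at least $\E(u_\theta,u_\theta)+q_0\|u_\theta\|^2_{L^2(\Omega)}+\theta\|u_\theta\|^2_{L^2(\ext)}$, which dominates $c\|u_\theta\|_V^2+(\theta-1)\|u_\theta\|^2_{L^2(\ext)}$. The right side is bounded by $\|F\|_{V^\star}\|u_\theta\|_V+\|f\|_{L^2(\ext)}\|u_\theta\|_{L^2(\ext)}$. Young's inequality then gives that $\|u_\theta\|_V$ is bounded uniformly in $\theta$, and that $\theta\|u_\theta\|^2_{L^2(\ext)}\le C$. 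Hence $\|u_\theta\|_{L^2(\ext)}=O(\theta^{-1/2})\to0$.

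\textbf{Compactness and support of the limit.} Take any sequence $\theta_k\to\infty$. Boundedness in the Hilbert space $V$ gives a subsequence with $u_{\theta_k}\rightharpoonup\tilde u$ weakly in $V$, hence weakly in $L^2(\rn)$. Since $u_{\theta_k}\to0$ strongly in $L^2(\ext)$, we get $\tilde u=0$ a.e. in $\ext$.

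For $w$ vanishing outside $\Omega$, $\E(w,w)$ coincides with the full Gagliardo seminorm (as in Remark (e)). Therefore $\tilde u\in H^s(\rn)$ with support in $\overline\Omega$, that is, $\tilde u\in\widetilde H_0^s(\Omega)$.

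For strong $L^2(\rn)$ convergence, it suffices to have strong convergence on $\Omega$. The restrictions $u_{\theta_k}|_\Omega$ are bounded in $H^s(\Omega)$, because the $\Omega\times\Omega$ part of $\E$ is controlled. Using the compact embedding $H^s(\Omega)\hookrightarrow L^2(\Omega)$, valid for bounded Lipschitz $\Omega$, we get $u_{\theta_k}\to\tilde u$ in $L^2(\Omega)$. Together with the exterior estimate this gives convergence in $L^2(\rn)$. This needs $\Omega$ bounded and Lipschitz, which I would assume as in the Dirichlet setting.

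\textbf{Identification of the limit.} Test the weak formulation with $w\in\widetilde H_0^s(\Omega)\subset V$. The exterior terms $\theta(u_\theta,w)_{L^2(\ext)}$ and $(f,w)_{L^2(\ext)}$ vanish because $w=0$ in $\ext$. This leaves
\begin{equation*}
\E(u_{\theta_k},w)+(qu_{\theta_k},w)_{L^2(\Omega)}=\langle F,w\rangle .
\end{equation*}
The map $v\mapsto\E(v,w)$ is a continuous linear functional on $V$, so weak convergence lets us pass to the limit. We obtain $\E(\tilde u,w)+(q\tilde u,w)_{L^2(\Omega)}=\langle F,w\rangle$ for all $w\in\widetilde H_0^s(\Omega)$. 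By Remark (e), this is exactly the weak formulation of \eqref{HDP}, where $F$ is restricted to $\widetilde H_0^s(\Omega)$.

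That problem has a unique solution by Lax--Milgram on $\widetilde H_0^s(\Omega)$, with coercivity from $q\ge q_0$. So every subsequence has a further subsequence converging, weakly and in $L^2$, to the same $\tilde u$. Applying Theorem \ref{theo-FA} in the weak topology and in the $L^2$ norm yields convergence of the whole family. For the net $\theta\to\infty$, argue along arbitrary sequences.

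\textbf{Main obstacle.} The delicate step is the exterior penalty estimate, namely extracting both a $\theta$-uniform $V$-bound and the decay $\|u_\theta\|_{L^2(\ext)}\to0$ from a single energy identity while the $\theta$-term sits on the left side. A secondary point is making sure the limit lies in $\widetilde H_0^s(\Omega)$, i.e. that the $\E$-bound controls the full $H^s(\rn)$ seminorm of $\tilde u$. This holds because the region $\ext\times\ext$ missing from $\E$ contributes nothing once $\tilde u=0$ there. Lower semicontinuity of $\E$ under weak convergence in $V$ gives the needed finiteness.
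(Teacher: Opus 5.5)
Your proof is correct and follows essentially the same route as the paper. Both arguments use the energy estimate for $\theta\ge 1$, which gives a $\theta$-uniform $V$-bound and $\theta\|u_\theta\|^2_{L^2(\ext)}\le C$, then weak compactness in $V$, a compact embedding for strong convergence on $\Omega$, identification of the limit by testing with $w\in\widetilde H_0^s(\Omega)$, and Theorem \ref{theo-FA}; you are in fact more careful than the paper in justifying $\tilde u\in\widetilde H_0^s(\Omega)$ through the full Gagliardo seminorm and in flagging that the compact embedding needs a regularity assumption on $\Omega$ (bounded Lipschitz), which the paper uses implicitly when it asserts that $V\subset L^2(\Omega)$ is compact.
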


                \begin{proof}
                We denote the duality map between $V^\star$ and $V$ by $\langle\cdot,\cdot\rangle_{V^\star,V}$. 
                 We recall that from the definition of weak solutions of \eqref{NHRP}, we have the equality
                \begin{equation*}
                    \E(u_\theta, u_\theta) + \int_\Omega qu_\theta^2\; dx + \theta \|u_\theta\|_{L^2(\ext)}^2  = \langle F, u_\theta\rangle_{V^\star,V} +(f,u_\theta)_{L^2(\ext)}.
                \end{equation*}
                From the above equality we get the estimate
                   \begin{equation*}
                     \E(u_\theta, u_\theta)+
                    q_0 \int_{\Omega}u_\theta^2\,dx + \theta \|u_\theta\|^2_{L^2(\ext)} \leq \| F\|_{V^\star}\|u_\theta\|_{V} + \|f\|_{L^2(\ext)} \|u_\theta\|_{L^2(\ext)}.
                \end{equation*}
                Since $\theta \rightarrow \infty$, we may suppose that $\theta > q_0$ and $\theta > 1$.
                Thus, we have the  estimate
                $$
                \|u_\theta\|_V \leq \frac{1}{\min(1, q_0)}\left (\|F\|_{V^\star} + \|f\|_{L^2(\ext)} \right ).
                $$
                Since $u_\theta$ is bounded in norm in the Hilbert space $V$, we can extract a subsequence denoted by $\utp$ that converges weakly towards some $\tilde u \in V$, as $\theta\to\infty$. Since the inclusion $V\subset L^2(\Omega)$ is compact, it follows that the convergence is strong in $L^2(\Omega)$.
                We need to prove that $\tilde u$ is a weak solution of \eqref{HDP}.
    
                First of all, we mention that
                $$
                \|u_\theta\|_{L^2(\ext)}^2 \leq \frac{1}{\theta} \left ( \|F\|_{V^\star} \|u_\theta\|_{V} +\|f\|_{L^2(\ext)}\|u_\theta\|_{L^2(\ext)} \right ).
                $$
                Since $\|u_\theta\|_{L^2(\rn)}$  and $\|u_\theta\|_V$ are bounded independently of $\theta$, we can conclude that $u_\theta$ converges strongly to $0$ in $L^2(\ext)$, as $\theta\to\infty$.   Since $V$ is continuously embedded in $L^2(\rn)$, it follows that  $\tilde u = 0\,\text{ a.e}$ in $\ext$. In other words, we have that $\tilde u \in \widetilde H_0^s(\Omega)$. We also have that $\utp$ converges strongly to $\tilde u$ in $L^2(\rn)$, as $\theta\to\infty$.

                Now, taking $w \in \widetilde H_0^s(\Omega)$ and using the  weak convergence in $V$, we can deduce that, as $\theta\to\infty$,
                $$
                 \E(\utp, w) + (q\utp, w)_{L^2(\Omega)} \longrightarrow \E(\tilde u, w) + (q\tilde u, w)_{L^2(\Omega)}. 
                $$
                Since 
                $$\E(\utp, w) + (q\utp, w)_{L^2(\Omega)} = \langle F, w\rangle_{V^\star,V}, \qquad \forall\ w \in \widetilde H_0^s(\Omega),$$ 
                we have that
                $$
                \E(\tilde u, w) + (q\tilde u,w)_{L^2(\Omega)} =  \langle F, w\rangle_{V^\star,V}, \qquad \forall\; w \in \widetilde H_0^s(\Omega).
                $$
                Hence, $\tilde u$ is the unique weak solution of the  Dirichlet problem \eqref{HDP}.
                As there was no specifications on $\utp$, we can conclude that every converging subsequence of $u_\theta$ admits $\tilde u$ as a limit. It follows from Theorem \ref{theo-FA} that the whole sequence $u_\theta$ converges to $\tilde u$ in $\widetilde H_0^s(\Omega)$, as $\theta\to\infty$.
                \end{proof}
                
                 The nonhomogeneous exterior case is in fact more tricky, and we can only prove a strong convergence in $L^2(\rn)$ as we show next.
                 
                 \begin{theorem}\label{conv-rn}
                 Let $\Omega\subset\rn$ be a bounded domain with a Lipschitz continuous boundary.
               Let $q$ satisfy Assumption \ref{Ass-q}. Let $z\in H^s(\ext)$ and let $u\in H^s(\mathbb R^n)$ be the unique weak solution of the  Dirichlet problem
         \begin{equation}\label{DP2}
         \begin{cases}
                 \lfrac u+ qu  = 0\quad &\text{ in  } \Omega \\	
                 u = z \quad & \text{ in } \ext.
     \end{cases}
         \end{equation}
         Let $\theta\in\mathbb R_+^\star$ and $u_\theta\in V$ be the unique weak solution of the Robin problem
         $$
         \begin{cases}
              \lfrac u_\theta+ q u_\theta  =  0 \quad &\text{ in } \Omega, \\
              \ns u_\theta +  \theta u_\theta  = \theta z \quad &\text{ in } \ext.
         \end{cases}$$
  If $\ns u\in L^2(\ext)$, then $u_\theta$ converges strongly  to $u$ in $L^2(\rn)$, as $\theta\to\infty$.
  \end{theorem}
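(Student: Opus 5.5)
Set $w_\theta:=u_\theta-u$ and derive an energy identity for $w_\theta$ in which $\theta$ sits in front of $\|w_\theta\|_{L^2(\ext)}^2$.

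First, check that $u$ is admissible for the integration by parts formula of Proposition \ref{prop_parts_integration}. Since $u\in H^s(\rn)$, we have $u\in V$. Since $u$ solves \eqref{DP2} weakly, $\lfrac u=-qu\in L^2(\Omega)$ in the distributional sense on $\Omega$. By hypothesis $\ns u\in L^2(\ext)$. Then \eqref{Int-Part} gives, for every $v\in V$,
\begin{equation*}
\E(u,v)+(qu,v)_{L^2(\Omega)}=\int_{\ext} v\,\ns u\,dx .
\end{equation*}
The weak Robin formulation for $u_\theta$ reads $\E(u_\theta,v)+(qu_\theta,v)_{L^2(\Omega)}+\theta(u_\theta,v)_{L^2(\ext)}=\theta(z,v)_{L^2(\ext)}$ for all $v\in V$. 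Subtract the two identities, use $u=z$ on $\ext$, and take $v=w_\theta\in V$. This yields
\begin{equation*}
\E(w_\theta,w_\theta)+\int_\Omega q w_\theta^2\,dx+\theta\|w_\theta\|_{L^2(\ext)}^2=-\int_{\ext} w_\theta\,\ns u\,dx .
\end{equation*}

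Next, estimate the right-hand side by Cauchy--Schwarz and Young:
\begin{equation*}
\Bigl|\int_{\ext} w_\theta\,\ns u\,dx\Bigr|\le \frac{\theta}{2}\|w_\theta\|_{L^2(\ext)}^2+\frac{1}{2\theta}\|\ns u\|_{L^2(\ext)}^2 .
\end{equation*}
Combining this with $q\ge q_0$ gives
\begin{equation*}
\E(w_\theta,w_\theta)+q_0\|w_\theta\|_{L^2(\Omega)}^2+\frac{\theta}{2}\|w_\theta\|_{L^2(\ext)}^2\le \frac{1}{2\theta}\|\ns u\|_{L^2(\ext)}^2 .
\end{equation*}
For $\theta\ge 1$ this implies $\|w_\theta\|_{L^2(\rn)}^2\le C\theta^{-1}$. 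It even gives the stronger bound $\|w_\theta\|_{L^2(\ext)}=O(\theta^{-1})$, so $u_\theta\to u$ strongly in $L^2(\rn)$.

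The main obstacle is the first step: justifying that $u$ satisfies the integration by parts identity against all $v\in V$, with the distributional equation on $\Omega$ holding in $L^2(\Omega)$. I would obtain this from Proposition \ref{prop_parts_formula_distrib} together with the weak Dirichlet formulation tested on $\mathcal D(\Omega)$. This identifies $\lfrac u=-qu$ as an $L^2(\Omega)$ function, after which Proposition \ref{prop_parts_integration} applies directly, since $u\in W^{s,2}_\Omega$.

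One might expect more than $L^2$ convergence. The estimate above actually bounds $\E(w_\theta,w_\theta)$ by $O(\theta^{-1})$ as well. The paper states only $L^2$ convergence, presumably because $u_\theta$ need not lie in $H^s(\rn)$; the $\E$-bound can be noted as a remark.
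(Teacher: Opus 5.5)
Your proof is correct and follows essentially the paper's argument. The paper likewise subtracts the integration-by-parts identity for $u$ from the Robin formulation for $u_\theta$, tests with $v=u-u_\theta$, and bounds $\theta\|u-u_\theta\|^2_{L^2(\Omega_e)}$ and $q_0\|u-u_\theta\|^2_{L^2(\Omega)}$ by $\|u-u_\theta\|_{L^2(\Omega_e)}\|\mathcal N_s u\|_{L^2(\Omega_e)}$. Your deviations are minor refinements: you apply Proposition~\ref{prop_parts_integration} to $u$ alone rather than to $u-u_\theta$, and your use of Young's inequality additionally gives $\mathcal E(u_\theta-u,u_\theta-u)=O(\theta^{-1})$.
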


  \begin{proof}
  We use a similar argument as the one given by Antil, Khatri \& Warma \cite[Theorem 6.3]{AKW-Inv}. 
          Using the integration by parts formula in Proposition \ref{prop_parts_integration},  we have the following equalities for every $v\in V$:
                \begin{align}\label{ed-ali}
        \E( u-u_\theta, v) &+ (q( u-u_\theta), v)_{L^2(\Omega)} + \theta ( u-u_\theta, v)_{L^2(\ext)}\notag \\
         = &(\lfrac ( u - u_\theta), v)_{L^2(\Omega)} + (\ns ( u - u_\theta), v)_{L^2(\ext)} \notag\\
         &+ (q( u-u_\theta), v)_{L^2(\Omega)} + \theta( u-u_\theta, v)_{L^2(\ext)}\notag \\
         = & ((\lfrac +q) u, v)_{L^2(\Omega)} - ((\lfrac +q)u_\theta, v)_{L^2(\Omega)} \notag\\
        & + (\ns ( u - u_\theta), v)_{L^2(\ext)} + \theta( u-u_\theta, v)_{L^2(\ext)} \notag\\
         =& (\ns  u, v)_{L^2(\ext)} + \theta( u, v)_{L^2(\ext)} - (\ns u_\theta + \theta u_\theta, v)_{L^2(\ext)} \notag\\
         = &(\ns  u, v)_{L^2(\ext)} + (\theta z - \theta z, v)_{L^2(\ext)} \notag \\
         =& (\ns  u, v)_{L^2(\ext)}. 
         \end{align}
         Thus, taking $v := u - u_\theta$ as a test function in \eqref{ed-ali} and using the assumption that $\ns u\in L^2(\ext)$, we get the estimates
         $$
         \theta \|\ u - u_\theta \|^2_{L^2(\ext)} \leq \int_{\ext} \left|( u - u_\theta) \ns  u\right|\,dx \leq \| u - u_\theta \|_{L^2(\ext)} \|\ns  u\|_{L^2(\ext)}.
         $$
         Dividing both sides by $\theta$, we get the strong convergence of $u_\theta$ towards $u$ in $L^2(\ext)$, as $\theta\to \infty$. 
         
         Similarly, from \eqref{ed-ali} we also get that
         $$
         q_0 \|\ u - u_\theta \|^2_{L^2(\Omega)} \leq \int_{\ext} \left|( u - u_\theta) \ns  u\right|\,dx \leq \| u - u_\theta \|_{L^2(\ext)} \|\ns  u\|_{L^2(\ext)}.
         $$
         This implies the strong convergence of $u_\theta$ in $L^2(\Omega)$, as $\theta\to \infty$.
         \end{proof}

We conclude this subsection with the following remark.

\begin{remark}
We observe the following facts. 
\begin{enumerate}
    \item[(a)] The result in Theorem \ref{conv-rn} has been proved in \cite[Theorem 6.3]{AKW-Inv} without making sure that any weak solution $u$ of the Dirichlet problem \eqref{DP2} satisfies $\ns u\in L^2(\ext)$.

    \item[(b)] We do not have a proof that for any weak solution $u$ of \eqref{DP2}, we have that $\ns u\in L^2(\ext)$. We can conjecture that this can be true if $\Omega$ and the given function $z$ are smooth as in Theorem \ref{conv-rn}. This is always the case in the local case $s=1$. This deserves a clarification in the nonlocal case $0<s<1$.
\end{enumerate}
\end{remark}
        
            $$
            $$
			\subsection{The inverse problem using the Robin-to-Robin map}\label{Rob-to-Rob}
            The main concern of this section is to solve the inverse problem \eqref{q_robin_frac}, that is, to recover the potential $q$. Our strategy here is to use the Robin-to-Robin map for both $L^\infty$- and $L^1$-weak solutions.
            
\subsubsection{The inverse problem for $L^{\infty}$-weak solutions}
		We proceed to solve the inverse problem within the $L^{\infty}$-weak solutions paradigm. 
        
        \begin{definition}[\bf $L^\infty$-Robin-to-Robin map]\label{def_linf_rmap}
       Let  $\theta, \theta' \in L^\infty(\ext)$ be such that $\theta-\theta' \neq 0$ a.e. in $\ext$ and let $f \in L^2(\ext)$. Let $u$ be the $L^\infty$-weak solution to the Robin problem \eqref{q_robin_frac} with given data $(f, \theta)$ and the operator $\nu_s=\mathcal N_s$. We call the $L^\infty$-Robin-to-Robin map, the mapping defined by
			$$
			\RthLI : L^2(\ext) \rightarrow L^2(\ext),\qquad \ns u + \theta u \mapsto \ns u + \theta' u.
			$$
		This can be rewritten in a more simple way as 
			\begin{equation*}
				\RthLI : L^2(\ext) \rightarrow L^2(\ext),\qquad f\mapsto  f + (\theta'-\theta)u.
			\end{equation*}
        \end{definition}

The following theorem is the main result of this subsection.
			
			\begin{theorem}
				Let $\Omega \subset \rn$ be a nonempty bounded open set with a Lipschitz continuous boundary. Let $\theta_0 \in \mathbb R^{*}_+$ and $\theta \in L^\infty(\ext)$ with $ \theta \ge  \theta_0$ a.e. in $\ext$.  Let also $q\in C(\overline\Omega)$ be such that $q\ge q_0>0$ for some constant $q_0$, and let $u\in V$ be the unique $L^\infty$-weak solution to the Robin problem \eqref{q_robin_frac}. Suppose that 
                \begin{itemize}
                \item $0<s<1/2$, or; 
                \item $1/2\le s<1$, the Robin-to-Robin data $(f, \RthLI f)$ are of $H^{s}$ regularity in a neighborhood of $\Omega$, and that $\frac{1}{\theta'-\theta}$ is bounded and Lipschitz continuous near $\partial \Omega$. 
                \end{itemize}
                Then, $u|_{\Omega}$ can be reconstructed with the knowledge of $(f, \RthLI f) \in (L^2(\ext))^2$, and
                \begin{equation}\label{eq-q}
                q(x) = - \frac{\lfrac u(x)}{u(x)}, \;\forall\;x  \in \Omega.
                \end{equation}
			\end{theorem}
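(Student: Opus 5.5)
From the data we know $u$ in the exterior. Since $\RthLI f - f = (\theta'-\theta)u$ a.e. in $\ext$ and $\theta'-\theta\neq 0$ a.e., we get
\[
u|_{\ext}=\frac{\RthLI f-f}{\theta'-\theta}.
\]
With this in hand, \eqref{eq-a.e.} gives $\ns u = f-\theta u$ a.e. in $\ext$, so the Neumann-type quantity is known as well. The plan has three steps:
\begin{enumerate}
\item[(i)] recover $u|_\Omega$ from the exterior data $(u|_{\ext},\ns u|_{\ext})$;
\item[(ii)] compute $\lfrac u$ in $\Omega$;
\item[(iii)] read off $q$ from \eqref{SEQ}.
\end{enumerate}

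For step (i), fix $x\in\ext$. By \eqref{op-Ns},
\[
\ns u(x)=\frac{C_{n,s}}{2}\Big(u(x)\int_\Omega\frac{dy}{|x-y|^{n+2s}}-\int_\Omega\frac{u(y)}{|x-y|^{n+2s}}\,dy\Big).
\]
The first term is known, so we know the function
\[
g(x):=\int_\Omega\frac{u(y)}{|x-y|^{n+2s}}\,dy,\qquad x\in\ext .
\]
Now compare with the fractional Laplacian of $w:=u\chi_\Omega$. For $x$ in an open set $W\subset\ext$ with $\overline W\cap\overline\Omega=\emptyset$, the point $x$ lies outside the support of $w$, so
\[
\lfrac w(x)=-C_{n,s}\,g(x).
\]
Hence $[\lfrac w]|_W$ is known from the data. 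We would like to apply Theorem \ref{thm_grsu_tikonov} to $w$, and this needs $w\in\widetilde H_0^s(\Omega)$.

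For $0<s<1/2$ this is automatic: $u|_\Omega\in H^s(\Omega)$ (it is a restriction of an element of $V$), and zero extension is bounded from $H^s(\Omega)$ into $H^s(\rn)$ for Lipschitz $\Omega$. For $1/2\le s<1$ we cannot cut off directly. Instead we use the hypotheses: near $\partial\Omega$, the function $u$ equals $(\RthLI f-f)/(\theta'-\theta)$, which is a product of an $H^s$ function with a bounded Lipschitz function, hence $H^s$ near $\partial\Omega$. Take a smooth cutoff $\chi$ equal to $1$ on a neighborhood of $\overline\Omega$ and supported where this regularity holds. The function $\chi u|_{\ext}$ is known, and after adjusting constants the remainder has support in a slightly enlarged Lipschitz domain $\Omega'$. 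We then run the same argument on $\Omega'$: subtract the known exterior contribution from $\ns u$, and apply Theorem \ref{thm_grsu_tikonov} to $u\chi$, which lies in $\widetilde H_0^s(\Omega')$. In either case the Tikhonov scheme \eqref{PT} reconstructs $w$, hence $u|_\Omega$. Combined with $u|_{\ext}$, this gives $u$ on all of $\rn$.

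For steps (ii) and (iii), with $u$ now known everywhere, we compute $\lfrac u$ in $\Omega$ in the distributional sense (Proposition \ref{prop_parts_formula_distrib}). By \eqref{SEQ}, $\lfrac u=-qu$ in $\mathcal D'(\Omega)$. Since $q\in C(\overline\Omega)$, the distribution $\lfrac u$ is an $L^2$ function on $\Omega$. To divide, we need $u\neq0$ in $\Omega$; a weak maximum principle should give this. Testing the weak formulation with $u^-$ for positive data $f$ gives $u\ge0$, and the UCP of Lemma \ref{lemma_unique_continuation} rules out $u$ vanishing on an open set unless $u\equiv0$. If some $u$ vanishes at isolated points, we would vary $f$ to avoid them. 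Division then yields \eqref{eq-q}.

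The main obstacle is the case $1/2\le s<1$ in step (i): justifying that the cut-off function belongs to $\widetilde H_0^s$ of a Lipschitz domain, and handling the fact that $\ns$ integrates only over $\Omega$. This is where the extra regularity assumptions are consumed, and I would build the argument around the product estimate for Lipschitz multipliers on $H^s$.
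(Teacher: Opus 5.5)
Your proposal is correct, and its reconstruction of $u$ is essentially the paper's. You recover $u|_{\ext}$ and $\ns u$ from $(f,\RthLI f)$, rewrite them as exterior values of the fractional Laplacian of a compactly supported $\widetilde H_0^s$ function, and apply Theorem \ref{thm_grsu_tikonov}, with zero extension doing the job when $0<s<1/2$. For $1/2\le s<1$ the two arguments use different but interchangeable devices:
\begin{itemize}
\item the paper subtracts a computable $H^s$ extension $h$ of $u|_{\ext}$ into $\Omega$ and reconstructs $u-h\in\widetilde H_0^s(\Omega)$;
\item you reconstruct $\chi u\in\widetilde H_0^s(\Omega')$ after adding the known term $\int_{\Omega'\setminus\Omega}\chi(y)u(y)|x-y|^{-n-2s}\,dy$ to $g$.
\end{itemize}
Both use the boundary hypotheses in the same way. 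Both also need $u\in H^s(\Omega_0)$, which follows from $u|_\Omega\in H^s(\Omega)$, the exterior $H^s$ bound, and the finiteness of the $\Omega\times\ext$ part of $\E(u,u)$ for $u\in V$.

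The real difference is how $q$ is identified. The paper invokes interior regularity (Ros-Oton--Serra, Kochubei) so that $\lfrac u(x)$ exists pointwise and \eqref{eq-q} can be read at every $x$. You stay with \eqref{SEQ}, which gives $\lfrac u=-qu$ in $L^2(\Omega)$ and hence \eqref{eq-q} a.e.\ on $\{u\neq 0\}$. Your route needs no regularity theory; the $C^{2s+\varepsilon}$ regularity the paper cites would ordinarily require H\"older continuity of $q$, not mere continuity. It also confronts the zeros of $u$, which the paper passes over. Its cost is that the literal ``for every $x\in\Omega$'' requires a pointwise meaning for $u(x)$ and $\lfrac u(x)$, i.e.\ some interior regularity, which you do not establish.

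Your fallback of varying $f$ to avoid isolated zeros is unjustified, since nothing makes the zero set discrete. It is also unnecessary. If $f\neq 0$, then \eqref{SEQ} together with Lemma \ref{lemma_unique_continuation} shows that $u$ cannot vanish a.e.\ on any ball in $\Omega$. So $-\lfrac u/u$ determines $q$ on a dense subset of $\Omega$, and continuity of $q$ then determines it everywhere. Your $u^-$ test does correctly give $u\ge 0$ for $f\ge 0$, but positivity is not needed for this.
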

            
			\begin{proof}
				Knowing $f \in L^2(\ext)$ and $\RthLI f$ we also know $(\theta'-\theta)u_{|\ext}$. Thus, we can easily compute
				$u|_{\ext}$ and $\ns u(x)$, for $x \in \ext.$
			We split the argument relatively to the value of $s$. 
            
            {\bf The case $0< s< 1/2$}. It is known that for such a value of $s$, the spaces $H^s(\Omega) = \widetilde H_0^s(\Omega)=H_0^s(\Omega)$ with equivalent norms (see e.g. \cite{Gris,War}). For $U\subset \rn$ an open set, we define the function $u_{U}$ by
            $$
            u_U(x) := \begin{cases}
                u(x) &\text{ if } x\in U,\\ 
                0 &\text{ otherwise.} \end{cases} $$ 
                Thus, if $u\in H^s(\rn)$, then  $u_{\Omega} \in \widetilde H_0^s(\Omega)$. Applying the linearity of the operator $\ns$ we obtain that
                     \begin{equation}\label{LND}
        \ns (u_\Omega) = \ns u - \ns (u_{\ext}).
        \end{equation}
Observe that the right-hand side of \eqref{LND} is known.
				Now, applying the definition of the operator $\ns$ and the fact that $u_{\Omega}$ vanishes in $\ext$, we obtain that for a.e. $x\in\ext$,
				$$
				\begin{aligned}
					(\ns u_{\Omega})(x) = C_{n,s}\int_\Omega \sfracf{u_\Omega(x)-u_\Omega(y)}\; dy =C_{n,s}\int_{\rn} \sfracf{u_\Omega(x) - u_{\Omega}(y)}\;dy 
					=  [\lfrac u_{\Omega}](x).
				\end{aligned}
				$$
				We can thus apply Theorem \ref{thm_grsu_tikonov} to reconstruct $u|_{\Omega}$. Since the right hand side of \eqref{q_robin_frac} is very smooth, then by the regularity obtained  by Ros-Oton \& Serra in \cite{RS-DP,ros2014extremal} we have that $u\in C^{0,2s+\varepsilon}(\Omega)$ for some $\varepsilon>0$, $(-\Delta)^su$ is a continuous function on $\Omega$ and $(-\Delta)^su(x)$ exists pointwise for all $x\in\Omega$ (see e.g. Kochubei et al. \cite[Theorem 2, pp 246]{HB-FC}).
                We can deduce that the first equation in \eqref{q_robin_frac} is satisfied pointwise for every $x\in\Omega$, so that, $q$ is given by \eqref{eq-q}.

                {\bf The case $1/2\le s<1$}. We do no longer have that $H^s(\Omega)$ and $\widetilde H_0^s(\Omega)$ coincide. But, assuming a bit more regularity on the Robin data we can still find a workaround. Let $\Omega_0$ be an open neighborhood of $\Omega$ such that $\overline \Omega \subset \Omega_0$, and suppose that $f, \RthLI f \in H^s(\Omega_0 \backslash \Omega)$. Suppose moreover that $\frac{1}{\theta'-\theta}$ is bounded and Lipschitz continuous in $\Omega_0 \backslash \Omega$.
We know that $u|_{\ext} = \frac{1}{\theta'-\theta} (\RthLI f - f)$. Thus, we can conclude that, under the previous assumptions, $u \in H^s(\Omega_0)$. We let $h \in H^s(\Omega_0)$ be any smooth extension of $u|_{\ext}$ inside $\Omega$. In other words $h|_{\ext} = u|_{\ext}$. Letting $v:= u-h$, we clearly see that $v\in \widetilde H_0^s(\Omega)$. Now, applying the same logic as in the previous case, we can easily compute $\ns v$ as $\ns u - \ns h$, which is known. Thus, $v$ can be reconstructed with the help of Theorem \ref{thm_grsu_tikonov}, and the reconstruction of $u$ follows immediately as $u=v+h$. Now, that $q$ is given by \eqref{eq-q} follows as in the first case by using the fact that in that case $u\in C^{1,2s-1+\varepsilon}(\Omega)$ for some $\varepsilon>0$, $(-\Delta)^su$ is a continuous function on $\Omega$ and
                $(-\Delta)^su(x)$ exists pointwise for all $x\in\Omega$ by \cite{HB-FC}. Hence, the first equation in \eqref{q_robin_frac} is satisfied pointwise for every $x\in\Omega$.
			\end{proof}
            	
        \subsubsection{The inverse problem for $L^1$-weak solutions}
            In this subsection, without any further mention, we assume that the coefficients $\theta, \theta' \in \Lambda_1$ are such that $(\theta-\theta') \in L^\infty(\ext)$ and $\theta-\theta' \neq 0$ a.e. in $\ext$, and we recall that $\Lambda_1$ has been defined in \eqref{Lambda}. We have the following result.
        
            \begin{lemma}
                Let $\theta, \theta' \in \Lambda_1$.             
                If $(\theta-\theta') \in L^\infty(\ext)$, then 
                $\wtt \simeq \wtp$
                with equivalent norms.
            \end{lemma}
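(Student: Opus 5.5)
The plan is to show that the two norms $\|\cdot\|_{\wtt}$ and $\|\cdot\|_{\wtp}$, defined by \eqref{norm-W}, are equivalent on the common class of measurable functions. Once that holds, the two spaces coincide as sets, because each is defined as the set of measurable $u$ with finite norm. Since $\E(u,u)$ and $\|u\|^2_{L^2(\Omega)}$ appear identically in both norms, it suffices to compare the exterior weighted terms $\int_{\ext}|\theta|u^2\,dx$ and $\int_{\ext}|\theta'|u^2\,dx$.

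First I will use the definition \eqref{Lambda} of $\Lambda_1$. There are constants $\alpha,\alpha'>0$ with $\theta\ge\alpha>0$ and $\theta'\ge\alpha'>0$ a.e. in $\ext$, so $|\theta|=\theta$ and $|\theta'|=\theta'$. Set $M:=\|\theta-\theta'\|_{L^\infty(\ext)}$. Then a.e. in $\ext$
\begin{equation*}
\theta' \le \theta + M \le \theta + \frac{M}{\alpha}\theta = \Bigl(1+\frac{M}{\alpha}\Bigr)\theta ,
\end{equation*}
where the middle step uses $1\le \theta/\alpha$. Multiplying by $u^2$ and integrating gives $\int_{\ext}\theta' u^2\,dx\le (1+M/\alpha)\int_{\ext}\theta u^2\,dx$. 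Exchanging the roles of $\theta,\theta'$ gives $\int_{\ext}\theta u^2\,dx\le (1+M/\alpha')\int_{\ext}\theta' u^2\,dx$.

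Adding the common terms yields
\begin{equation*}
\|u\|_{\wtp}^2\le \Bigl(1+\frac{M}{\alpha}\Bigr)\|u\|_{\wtt}^2,\qquad \|u\|_{\wtt}^2\le\Bigl(1+\frac{M}{\alpha'}\Bigr)\|u\|_{\wtp}^2,
\end{equation*}
for every measurable $u$, with both sides allowed to be $+\infty$. Hence $\wtt=\wtp$ as sets, the norms are equivalent, and the identity map is a linear isomorphism between the two Hilbert spaces. This is the meaning of $\wtt\simeq\wtp$.

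No step is a real obstacle. The only point needing care is the pointwise comparison of the weights: we must absorb the additive constant $M$ into a multiple of $\theta$, and this is exactly where the uniform positive lower bound from $\Lambda_1$ is used. Without that bound, for instance if $\theta$ could approach $0$, the additive bound $\theta'\le\theta+M$ alone would not give a multiplicative comparison of the weighted $L^2$ norms.
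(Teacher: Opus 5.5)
Your proof is correct and is essentially the paper's argument: both use $\theta'\le\theta+|\theta'-\theta|$ and control the bounded-difference term through the uniform lower bound $\theta\ge\alpha>0$ from $\Lambda_1$. The paper uses that bound only implicitly, by noting that $u\in\wtt$ lies in $L^2(\rn)$. Your version is a bit tidier: it gives explicit equivalence constants in both directions, whereas the paper only shows that the norm is finite. It also avoids the paper's ``without loss of generality $\theta'>\theta$'', which is not automatic, since $\theta-\theta'$ may change sign.
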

            
            \begin{proof}
            Suppose without loss of generality that $\theta' > \theta$ a.e. in $\ext$. It is easy to see that we have the continuous embedding $\wtp\hookrightarrow \wtt$.
            
                Next, let $u \in \wtt$. Then, $\displaystyle\int_{\ext} |\theta| u^2\;dx < \infty$. 
                Besides, we also know that $$
                \int_{\ext} \theta' u^2\;dx = \int_{\ext} |\theta + (\theta'-\theta)|u^2\;dx \leq \int_{\ext} |\theta|u^2\;dx + \int_{\ext} |\theta'-\theta|u^2\; dx.
                $$
                Since $u\in L^2(\rn)$ and $(\theta-\theta') \in L^\infty(\ext)$, it follows that $\displaystyle \int_{\ext} |\theta'-\theta|u^2\;dx < \infty$. We can conclude that $\|u\|_{\wtp} < \infty$.
            \end{proof}

            \begin{definition}[\bf $L^1$-Robin-to-Robin map]
                Let $f\in L^2(\ext)$ and $u_f$ be the $L^1$-weak solution to the Robin problem \eqref{q_robin_frac} with given data $(f, \theta)$. We call the $L^1$-Robin-to-Robin map,  the mapping given by
			$$
			\RthLo : L^2(\ext) \rightarrow L^2(\ext),\qquad \ns u + \theta u \mapsto \ns u + \theta' u,
			$$
			which can be rewritten in a more simple way as
			\begin{equation*}
				\RthLo : L^2(\ext) \rightarrow L^2(\ext),\qquad f \mapsto  f + (\theta'-\theta)u_f.
			\end{equation*}
            \end{definition}
            
            We make the following observation.
            
            \begin{remark}
                The definition above is almost the same than the definition of the $L^\infty$-Robin-to-Robin map. Nevertheless, we emphasize that the $L^1$-Robin-to-Robin map is well defined thanks to the fact that $\wtt \simeq \wtp$ and that $u$ is also a weak solution in $\wtp$ of the problem
                $$
                \begin{cases}
					\lfrac u+ qu  = 0 \quad &\text{ in  } \Omega \\	
					\ns u + \theta' u = \RthLo f \quad &\text{ in  } \ext.
				\end{cases}
                $$
            \end{remark}

            The following theorem is the main result of this subsection.

            \begin{theorem}
				Let $\Omega \subset \rn$ be a nonempty bounded open set with a Lipschitz continuous boundary. Let $q\in C(\overline \Omega)$ be such that $q\ge q_0>0$ in $\Omega$ for some constant $q_0$ and let $u$ be the unique $L^1$-weak solution to the Robin problem \eqref{q_robin_frac}. Suppose that 
                \begin{itemize}
                \item $0<s<1/2$, or;
                \item $1/2\le s<1$, the Robin-to-Robin data $(f, \RthLo f)$ are of $H^{s}$ regularity in a neighborhood of $\Omega$, and that $\frac{1}{\theta'-\theta}$ is bounded and Lipschitz continuous near $\partial \Omega$. 
                \end{itemize}
                Then $u|_{\Omega}$ can be reconstructed from the knowledge of the unique pair $(f, \RthLo f)$, and $q$ is given as in \eqref{eq-q}.
			\end{theorem}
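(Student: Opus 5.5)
The plan is to follow the proof of the $L^\infty$ version almost verbatim. The only new ingredient is to check that, in the $L^1$ setting, the exterior data still give $u|_{\ext}$ and $\ns u$ in a usable form.

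\textbf{Step 1: exterior data.} From the pair $(f,\RthLo f)$ we know $(\theta'-\theta)u = \RthLo f - f$ in $\ext$. Since $\theta-\theta'\neq 0$ a.e., this gives $u|_{\ext} = (\RthLo f - f)/(\theta'-\theta)$ a.e. in $\ext$. By Theorem \ref{thm_l1_sol}, the identity $\ns u + \theta u = f$ holds a.e. in $\ext$, so $\ns u = f - \theta u$ is known a.e. in $\ext$. Its integrability requires some care: $\theta u$ need not lie in $L^2(\ext)$, because $\theta$ is only $L^1_{\rm loc}$. However, on any set $W$ with $\overline W\subset\ext$ at positive distance from $\Omega$ where $\theta$ is bounded, $\ns u$ restricts to an $L^2(W)$ function. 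Such a $W$ exists because $\theta-\alpha\in L^1(\ext)$, so $\theta$ is finite a.e. and we may shrink $W$ to a set where $\theta$ is bounded. In any case, Theorem \ref{thm_grsu_tikonov} only needs the data on some open $W$.

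\textbf{Step 2: reduction to compact support.} We split into the same two cases as before.

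For $0<s<1/2$, the function $u$ lies in $\wtt\subset V$, so $u|_\Omega\in H^s(\Omega)=\widetilde H_0^s(\Omega)$. Hence $u_\Omega\in\widetilde H_0^s(\Omega)$, and $\ns(u_\Omega)=\ns u-\ns(u_{\ext})$. The right-hand side is computable from Step 1, since $\ns(u_{\ext})(x)=\frac{C_{n,s}}{2}\,u(x)\int_\Omega|x-y|^{-n-2s}\,dy$ is explicit. For $x\in\ext$ we also have $\ns u_\Omega(x)=c\,[\lfrac u_\Omega](x)$ with the normalization constant matching as in the $L^\infty$ case. Restricting to $W$ with $\overline W\cap\overline\Omega=\emptyset$ (or $\overline W\subset\ext$), Theorem \ref{thm_grsu_tikonov} then reconstructs $u_\Omega$.

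For $1/2\le s<1$, the extra hypotheses give $u|_{\Omega_0\setminus\Omega}\in H^s$. Multiplying by the bounded Lipschitz function $1/(\theta'-\theta)$ preserves $H^s$, so together with $u|_\Omega\in H^s(\Omega)$ we get $u\in H^s(\Omega_0)$. We then choose an extension $h\in H^s$ of $u|_{\ext\cap\Omega_0}$ into $\Omega$, cut off away from $\Omega_0$ if needed, and set $v=u-h\in\widetilde H_0^s(\Omega)$. Exactly as before, $\ns v=\ns u-\ns h$ is known, and we reconstruct $v$ and hence $u=v+h$.

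\textbf{Step 3: formula for $q$.} Theorem \ref{thm_l1_sol} shows that $u$ solves $\lfrac u+qu=0$ in $\mathcal D'(\Omega)$. With $q\in C(\overline\Omega)$, interior regularity (Ros-Oton–Serra, and the pointwise existence of $\lfrac u$ from \cite{HB-FC}) makes the equation hold pointwise. Dividing by $u$ wherever $u\neq 0$ gives \eqref{eq-q}.

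\textbf{Main obstacle.} I expect the main difficulty to be justifying the regularity in Step 3 and the $L^2$/tail control of $\ns u$ in Step 1 when $\theta$ is unbounded. The point is that $u\in\wtt\subset V$ is only in $L^2(\rn)\subset\mathcal L^1_s$, so the interior regularity results apply. I would handle the tail by working only on a bounded $W$ where $\theta$ is bounded.
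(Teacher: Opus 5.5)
Your proposal matches the paper's proof. The paper only says the argument is the $L^\infty$ one ``with the necessary modifications'', and you carry out exactly that transfer: you recover $u|_{\ext}$ and $\ns u$ from $(f,\RthLo f)$, reduce to a function in $\widetilde H_0^s(\Omega)$ in each range of $s$, apply Theorem \ref{thm_grsu_tikonov}, and then read off $q$ by interior regularity.

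Two small repairs are needed.

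\emph{Step 1.} $\theta$ being finite a.e.\ does not give an open $W$ on which $\theta$ is bounded, since an $L^1_{\rm loc}$ function can be essentially unbounded on every nonempty open set. You do not need this. If $\mathrm{dist}(x,\Omega)\ge d>0$, then $|\ns u(x)|\le C_d\bigl(|u(x)|+\|u\|_{L^1(\Omega)}\bigr)$, so $\ns u\in L^2(W)$ for every bounded $W$ at positive distance from $\Omega$, whatever $\theta$ does. On such $W$, $\ns u_\Omega(x)=-\frac{C_{n,s}}{2}\int_\Omega u(y)|x-y|^{-n-2s}\,dy$ is even smooth.

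\emph{Step 2, case $1/2\le s<1$.} Do not cut $h$ off away from $\Omega_0$. If you do, $v=u-h$ no longer vanishes on $\ext\setminus\Omega_0$, so $v\notin\widetilde H_0^s(\Omega)$. Instead take $h=u$ on all of $\ext$, as the paper does. Only $h\in H^s(\Omega_0)$ is needed, and the zero extension of $v$ then lies in $\widetilde H_0^s(\Omega)$.
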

            
            \begin{proof}
                The proof is almost identical to the proof for $L^\infty$-weak solutions with the necessary modifications. We omit the details for the sake of brevity. 
            \end{proof}


\section{A new notion of Kelvin transform}\label{Kel-T}
In this section, we focus our attention on the Kelvin transform, an operator that is mostly featured in potential theory. The Kelvin transform performs a weighted sphere inversion, and swaps neighborhoods of the origin and neighborhoods of the infinity. Its most important property is that it preserves harmonic functions.  

The Kelvin transform was used by Riesz for the original proof of the UCP for Riesz Potentials (see e.g., Riesz \cite[Chap. III.11]{riesz} and \cite[Remark 3.2]{GRSU}). Following a similar idea, we use this operator to introduce a reconstruction algorithm in Section 5. But first, we define through duality a weak version of this operator that will enable us to deal with negative order Sobolev spaces. To the best of our knowledge, it is the first time that a weak Kelvin transform has been introduced. In this section $\alpha \in (0,2)$ is an arbitrary real number. 
        
        \begin{definition}
            Let $U \subset \rn$ be such that $0 \notin \overline U$. Then, we call the sphere inversion of $U$ and denote it by $U^\circ$, the set defined by
            $\displaystyle U^\circ := \left \{ \frac{y}{|y|^2}:\; y \in U\right \}.
            $
        \end{definition}

        Next, we introduce the classical Kelvin transform.
        
        \begin{definition}[\bf Strong Kelvin Transform]
        Let $f: \rn \rightarrow \mathbb R$ be a measurable function. We call the strong Kelvin transform of $f$ and denote it by $K_\alpha[f]$,  the function defined on $\rn_*:=\rn\setminus\{0\}$ by
        $$
        K_\alpha[f]: x \mapsto \frac{1}{|x|^{n-\alpha}}f\left (\frac{x}{|x|^2} \right ).
            $$
        \end{definition}

        We prove a few regularity results that we need concerning the strong Kelvin transform.
        
            \begin{proposition}\label{prop_kelvin_regularity}
           Let $0< r < 1$ be a real number and $W\subset\rn$ a bounded open set such that $0 \notin \overline W$. Let $\varphi\in \widetilde H_0^r(W)$. Then, $K_\alpha[\varphi] \in \widetilde H_0^r(W^\circ)$ and there is a constant $C(W,n,\alpha)>0$ such that 
           \begin{equation}\label{SKT}
           \|K_\alpha[\varphi]\|_{H^r(\rn)} \leq C(W, n, \alpha) \|\varphi\|_{H^r(\rn)}.
           \end{equation}
           Moreover, if $\varphi \in \mathcal D(\rnstar)$, then $K_\alpha[\varphi] \in 
           \mathcal D(\rnstar)$.
        \end{proposition}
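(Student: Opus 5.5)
The plan is to treat $K_\alpha$ as a composition of the smooth diffeomorphism $\Phi(x):=x/|x|^2$ of $\rnstar$ (an involution mapping $W^\circ$ onto $W$) with multiplication by the weight $w(x):=|x|^{-(n-\alpha)}$. Since $0\notin\overline W$ and $W$ is bounded, there are constants $0<a<b<\infty$ with $a\le |y|\le b$ for all $y\in\overline W$. Hence $\overline{W^\circ}$ lies in the annulus $\{1/b\le |x|\le 1/a\}$, which is a compact subset of $\rnstar$.

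First, the support claim. If $\varphi=0$ a.e. outside $\overline W$, then $K_\alpha[\varphi](x)=0$ whenever $\Phi(x)\notin\overline W$, that is, whenever $x\notin\Phi(\overline W)=\overline{W^\circ}$. Since $\Phi$ is a diffeomorphism, null sets are preserved. Extending by $0$ at the origin, $K_\alpha[\varphi]$ is supported in $\overline{W^\circ}$. For $\varphi\in\mathcal D(\rnstar)$, the support of $\varphi$ is a compact set in $\rnstar$, and $\Phi$ maps it to a compact subset of $\rnstar$. There $w$ and $\Phi$ are $C^\infty$, so $K_\alpha[\varphi]\in\mathcal D(\rnstar)$.

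Next, the $L^2$ part of the norm. Changing variables $y=\Phi(x)$, whose Jacobian determinant has modulus $|y|^{2n}$, gives
$$\int |K_\alpha[\varphi]|^2\,dx=\int |y|^{2(n-\alpha)}|\varphi(y)|^2|y|^{-2n}\,dy=\int_W|y|^{-2\alpha}|\varphi(y)|^2\,dy\le a^{-2\alpha}\|\varphi\|_{L^2}^2.$$

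For the Gagliardo seminorm, I take a cutoff $\chi\in\mathcal D(\rnstar)$ with $\chi\equiv1$ on a neighbourhood of $\overline{W^\circ}$, supported in a slightly larger annulus $A'=\{c\le|x|\le d\}$ with $c>0$. Then $K_\alpha[\varphi]=(\chi w)\cdot(\varphi\circ\Phi)$. This reduces the estimate to two standard facts:
\begin{enumerate}
\item[(i)] Multiplication by a bounded Lipschitz function $g$ is bounded on $H^r(\rn)$. To see this, write $g(x)u(x)-g(y)u(y)=g(x)(u(x)-u(y))+u(y)(g(x)-g(y))$. The second term is controlled by $\|u\|_{L^2}^2\sup_y\int\min(L^2|x-y|^2,4\|g\|_\infty^2)|x-y|^{-n-2r}\,dx<\infty$.
\item[(ii)] Composition with $\Phi$ is bounded from $\widetilde H_0^r(W)$ into $H^r$ when restricted to the relevant region. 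The main step is the double integral. On $A'\times A'$, the map $\Phi$ is bi-Lipschitz, so $|\Phi(x)-\Phi(y)|\simeq|x-y|$ and the Jacobians are bounded above and below. A change of variables therefore bounds $\iint_{A'\times A'}$ by $C\iint|\varphi(\xi)-\varphi(\eta)|^2|\xi-\eta|^{-n-2r}$.
\end{enumerate}

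The pairs with $x\in\operatorname{supp}\chi$ and $y\notin A'$, or the reverse, are where the main obstacle lies, because $\Phi$ is not globally bi-Lipschitz near $0$ and near $\infty$. I handle them using the product structure directly rather than a change of variables. If $x\in\operatorname{supp}\chi$ and $y\notin A'$, then $K_\alpha[\varphi](y)=0$ and $|x-y|\ge \delta:=\operatorname{dist}(\operatorname{supp}\chi,\rn\setminus A')>0$. The contribution is therefore at most
$$2\int|K_\alpha[\varphi](x)|^2\int_{|x-y|\ge\delta}|x-y|^{-n-2r}\,dy\,dx\le C\delta^{-2r}\|K_\alpha[\varphi]\|_{L^2}^2,$$
which is already bounded by $C\|\varphi\|_{L^2}^2$.

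Combining the $L^2$ bound, the annulus estimate and this tail estimate yields \eqref{SKT}, with a constant depending on $a$, $b$, $n$, $\alpha$, that is, on $W$, $n$ and $\alpha$. Membership in $\widetilde H_0^r(W^\circ)$ then follows from the finiteness of the norm together with the support property shown above.
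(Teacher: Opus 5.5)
Your proof is correct, but it handles the exterior interactions differently from the paper. The paper splits the seminorm of $K_\alpha[\varphi]$ over $W^\circ\times W^\circ$ and $\complement W^\circ\times W^\circ$.

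On the first piece it does essentially what your (i)--(ii) do: add-and-subtract on the weighted difference, the chordal identity $|x-y|=|\bar x-\bar y|/(|\bar x||\bar y|)$, a change of variables, and the mean value theorem for the weight. On the cross piece, however, it also pushes the exterior variable through the inversion. That produces the weight $|\bar x|^{2r-n}$ on all of $\complement W$. It then has to treat a ball around the origin and the region at infinity separately, with a case split $2r<n$ versus $2r\ge n$.

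You instead enlarge to an annulus $A'$ on which $\Phi$ is exactly bi-Lipschitz. The interactions between $W^\circ$ and points just outside it are then transported into $\Phi(A')\times\Phi(A')$ and absorbed by the full seminorm $[\varphi]_{H^r(\mathbb R^n)}$; this is precisely where $\varphi\in\widetilde H_0^r(W)$ is used. All remaining pairs are disposed of by the separation $\delta$ and the elementary bound $C\delta^{-2r}\|K_\alpha[\varphi]\|_{L^2}^2$, so $\Phi$ is never applied where it degenerates.

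This buys you several things:
\begin{itemize}
\item The argument is shorter and needs no case distinction.
\item It works for any diffeomorphism that is smooth near the support.
\item It sidesteps the one loose spot in the paper's cross-term estimate. There, the bound by $\|\varphi\|_{L^2}^2$ alone in the case $2r\ge n$ cannot control the contribution of $\bar x\in\complement W$ close to $\partial W$, which genuinely requires the seminorm.
\end{itemize}
You also actually verify the claim about $\mathcal D(\mathbb R^n\setminus\{0\})$, which the paper's proof leaves implicit.

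Two small points of bookkeeping. First, either apply the product identity directly to $K_\alpha[\varphi]$ on $A'\times A'$, or note that your tail estimate applies verbatim to $\varphi\circ\Phi$, so that (i) can be used on all of $\mathbb R^n$. Second, the constant also depends on $r$, as it does in the paper.
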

        
    \begin{proof}
       Let $\varphi\in \widetilde H_0^r(W)$.  First of all, applying the change of variables $y:=\frac{x}{|x|^2}$ which Jacobian is given by $\displaystyle\frac{1}{|y|^{2n}}$, we obtain that
        $$
        \int_{\rn} |K_\alpha[\varphi]|^2\;dx = \int_{\rn} \frac{1}{|x|^{2n-2\alpha}} \left|\varphi\left(\frac{x}{|x|^2}\right)\right|^2\,dx = \int_{W} |y|^{-2\alpha}|\varphi(y)|^2\,dy.
    $$
    Since $\text{supp}(\varphi) \subset \overline W$ and $\overline W$ is away from the origin, we have that there is a constant $C_W>0$ such that
    $$\|K_\alpha[\varphi]\|_{L^2(\rn)} \leq C_W \|\varphi\|_{L^2(\rn)}.$$
    Now, regarding the other part of the norm, we observe that
    \begin{align}\label{eq-4-4}
    [K_\alpha[\varphi]]_{H^r(\rn)}^2 &:= \int_{\rn}\int_{\rn} \frac{\left (K_\alpha[\varphi](x)- K_\alpha[\varphi](y)\right )^2}{|x-y|^{n+2r}}\;dy\;dx \notag\\&= \iint_{(\rn)^2\backslash (\complement W^\circ)^2} \frac{\left (K_\alpha[\varphi](x)- K_\alpha[\varphi](y)\right )^2}{|x-y|^{n+2r}}\;dy\;dx,
    \end{align}
    with $\complement W^\circ := \rn \backslash W^\circ$. We know that $(\rn)^2\backslash (\complement W^\circ)^2 = (W^\circ \times W^\circ) \cup (\complement W^\circ \times W^\circ) \cup (W^\circ \times \complement W^\circ)$. We have used the notation $[\cdot]_{H^r(\rn)}$ to mean that it is a semi-norm.
    Since the integrand in \eqref{eq-4-4} is clearly symmetric, we have that
\begin{align}\label{eq-4-5}
[K_\alpha[\varphi]]_{H^r(\rn)}^2 = &\int_{W^\circ}\int_{W^\circ} \frac{\left (K_\alpha[\varphi](x)- K_\alpha[\varphi](y)\right )^2}{|x-y|^{n+2r}}\;dy\;dx \notag\\
&+ 2\int_{\complement W^\circ}\int_{\times W^\circ} \frac{\left (K_\alpha[\varphi](x)- K_\alpha[\varphi](y)\right )^2}{|x-y|^{n+2r}}\;dy\;dx.
\end{align}
For the first integral in the right hand side of \eqref{eq-4-5}, we operate a classical separation scheme that can be found in  \cite[Lemma 5.3]{NPV}. We use the notation $\bar x:= x/|x|^2$. Then,
$$
\begin{aligned}
\int_{W^\circ} \int_{W^\circ} \frac{\left (K_\alpha[\varphi](x)- K_\alpha[\varphi](y)\right )^2}{|x-y|^{n+2r}}\;dy\;dx
&=  \int_{W^\circ} \int_{W^\circ} \frac{\left (\frac{\varphi(\bar x)}{|x|^{n-\alpha}}- \frac{\varphi(\bar y)}{|y|^{n-\alpha}}\right )^2}{|x-y|^{n+2r}}\;dy\;dx
\\&= \int_{W^\circ} \int_{W^\circ} \frac{\left (|\bar x|^{n-\alpha}\varphi(\bar x)- |\bar y|^{n-\alpha}\varphi(\bar y)\right )^2}{|x-y|^{n+2r}}\;dy\;dx
\\&\leq 2\left ( \int_{W^\circ} \int_{W^\circ} \frac{\left (|\bar x|^{n-\alpha}\varphi(\bar x)- |\bar x|^{n-\alpha}\varphi(\bar y)\right )^2}{|x-y|^{n+2r}}\;dy\;dx \right .\\& \quad\left . +\int_{W^\circ} \int_{W^\circ} \frac{\left (|\bar x|^{n-\alpha}\varphi(\bar y)- |\bar y|^{n-\alpha}\varphi(\bar y)\right )^2}{|x-y|^{n+2r}}\;dy\;dx \right )
\\ &\leq 2\left ( \int_{W^\circ}\int_{W^\circ} \frac{\left(\varphi(\bar x) - \varphi(\bar y) \right)^2 |\bar x|^{3n-2\alpha+2r}|\bar y|^{n+2r}}{|\bar x- \bar y|^{n+2r}}\,dy\;dx\right .
\\& \quad\left . +\int_{W^\circ} \int_{W^\circ} \frac{(|\bar x|^{n-\alpha}\varphi(\bar y)- |\bar y|^{n-\alpha}\varphi(\bar y))^2(|\bar x||\bar y|)^{n+2r}}{|\bar x - \bar y|^{n+2r}}\;dy\;dx \right ).
\end{aligned}
$$
    In the last inequality we have used the chordal formula
    $\displaystyle
    |x-y| = \frac{|\bar x - \bar y|}{|\bar x||\bar y|}$.
    Then, applying the same change of variables as for the $L^2$-norm, we obtain that
    \begin{align}\label{eq-align}
   & \int_{W^\circ} \int_{W^\circ} \frac{\left (K_\alpha[\varphi](x)- K_\alpha[\varphi](y)\right )^2}{|x-y|^{n+2r}}\;dy\;dx \notag\\
   \leq 2&\left ( \int_{W}\int_{W} \frac{\left(\varphi(\bar x) - \varphi(\bar y) \right)^2 |\bar x|^{n-2\alpha+ 2r}|\bar y|^{-n+2r}}{|\bar x- \bar y|^{n+2r}}\,d\bar{y}\;d\bar{x}\right .
            \notag\\
            & \left . +\int_{W} \int_{W} \frac{(|\bar x|^{n-\alpha}\varphi(\bar y)- |\bar y|^{n-\alpha}\varphi(\bar y))^2(|\bar x||\bar y|)^{-n+2r}}{|\bar x - \bar y|^{n+2r}}\;d\bar y\;d\bar x \right ).
    \end{align}
    We notice that the domain of integration is shifted from $W^\circ$ to $W$. In the following, we denote by $m$ and $M$, respectively, the lower and upper bounds of $W$ in the Euclidian norm. In the first integral in the right hand side of \eqref{eq-align}, the term $\left (|\bar x|^{n-2\alpha+ 2r}|\bar y|^{-n+2r} \right )^{n-\alpha}$ is bounded by $\max\{m^{n-2\alpha + 2r}, M^{n-2\alpha + 2r}\} \cdot \max\{m^{-n+2r}, M^{-n+2r}\} $. The same applies to the term $(|\bar x||\bar y|)^{-n+2r}$ in the second integral. Moreover, the latter can be estimated as
    \begin{align*}    
       & \int_{W} \int_{W} \frac{(|\bar x|^{n-\alpha}\varphi(\bar y)- |\bar y|^{n-\alpha}\varphi(\bar y))^2(|\bar x||\bar y|)^{-n+2r}}{|\bar x - \bar y|^{n+2r}}\;d\bar y\;d\bar x \\
        &\leq C_W \int_{W} \int_{W} \frac{|\varphi(\bar y)|^2(|\bar x|^{n-\alpha}- |\bar y|^{n-\alpha})^2}{|\bar x - \bar y|^{n+2r}}\;d\bar y\;d\bar x\\
        & \leq C_W \left (\int_{W} \int_{B_\varepsilon(\bar x)\cap W} \frac{|\varphi(\bar y)|^2(|\bar x|^{n-\alpha}- |\bar y|^{n-\alpha})^2}{|\bar x - \bar y|^{n+2r}}\;d\bar y\;d\bar x \right . \\
        &\quad + \left . \int_{W} \int_{W\backslash B_\varepsilon(\bar x)} \frac{|\varphi(\bar y)|^2(|\bar x|^{n-\alpha}- |\bar y|^{n-\alpha})^2}{|\bar x - \bar y|^{n+2r}}\;d\bar y\;d\bar x\right ),
        \end{align*}
    where the constant $C_W>0$ comes from the term $(|\bar x||\bar y|)^{-n+2r}$, and $\varepsilon < \mathrm{dist}(W, \{0\})/2$. We know that
    \begin{equation}\label{RHS-I}
    \left | |x|^{n-\alpha} - |y|^{n-\alpha}\right | \leq \sup_{z\in B_\varepsilon(x)} (n-\alpha)|z|^{n-\alpha-1} \cdot |x-y|, \qquad \forall\; y \in B_\varepsilon(x),
    \end{equation}
    as a consequence of the Mean Value Theorem. The right hand side in \eqref{RHS-I} is bounded as a consequence of the choice of $\varepsilon$. Moreover, using \eqref{RHS-I} and applying Fubini's theorem, we get that 
    $$
    \begin{aligned}
        \int_{W} \int_{W\cap B_\varepsilon(\bar x)} \frac{|\varphi(\bar y)|^2}{|\bar x - \bar y|^{n+2r-2}}\;d\bar y\;d\bar x &= \int_W \int_W \chi_{\varepsilon}(|\bar x- \bar y|) \frac{|\varphi(\bar y)|^2}{|\bar x - \bar y|^{n+2r-2}}\;d\bar y\;d\bar x
        \\&= \int_W |\varphi(\bar y)|^2 \int_W \chi_{\varepsilon}(|\bar x- \bar y|) \frac{1}{|\bar x - \bar y|^{n+2r-2}}\;d\bar x \;d\bar y
        \\ &\leq C\|\varphi\|^2_{L^2(W)},
    \end{aligned}
    $$
    where the constant $C$ depends only on $n, r$ and $W$. As a consequence of the above computations, we have the inequality
    $$
    \int_{W^\circ} \int_{W^\circ} \frac{\left (K_\alpha[\varphi](x)- K_\alpha[\varphi](y)\right)^2}{|x-y|^{n+2r}}\;dy\;dx \leq C(n,\alpha, W) \|\varphi\|_{H^r(W)}^2.
    $$
    
   If we turn our attention to the second part of the semi-norm $[K_\alpha[\varphi]]_{H^r(\rn)}$ in \eqref{eq-4-5}, we point out that, since $\mathrm{supp}(K_\alpha[\varphi] )\subset W^\circ$, we get 
   $$
   \begin{aligned}    
       \iint_{(\complement W^\circ \times W^\circ)} \frac{\left (K_\alpha[\varphi](x)- K_\alpha[\varphi](y)\right)^2}{|x-y|^{n+2r}}\;dy\;dx &= \int_{\complement W^\circ}\int_{W^\circ} \frac{\left (K_\alpha[\varphi](y)\right)^2}{|x-y|^{n+2r}}\;dy\;dx
       \\&= \int_{\complement W^\circ} \int_{W^\circ} \frac{\left (\frac{\varphi(\bar y)}{|y|^{n-\alpha}}\right )^2 }{|x-y|^{n+2r}}\;dy\;dx 
       \\& = \int_{\complement W} \int_W \frac{|\bar x|^{n+2r}}{|\bar x|^{2n}} \frac{|\bar y |^{3n-2\alpha+2r}}{|\bar y|^{2n}} \frac{(\varphi(\bar y))^2}{|\bar x - \bar y |^{n+2r}}\;d\bar y\;d\bar x
       \\&= \int_{\complement W} \int_W \frac{|\bar y |^{n-2\alpha+2r}}{|\bar x|^{n-2r}} \frac{(\varphi(\bar y))^2}{|\bar x - \bar y |^{n+2r}}\;d\bar y\;d\bar x.
\end{aligned}
$$
We split the domain $\complement W$ between $B_\varepsilon(0)$ and $\complement W \backslash B_\varepsilon(0)$ where $\varepsilon$ is chosen such that $\varepsilon < \mathrm{dist}(W, \{0\})/2$.  Then,
       \begin{equation}\label{eq4.6}
       \begin{aligned}
       \int_{\complement W} \int_W \frac{|\bar y |^{n-2\alpha+2r}}{|\bar x|^{n-2r}} \frac{(\varphi(\bar y))^2}{|\bar x - \bar y |^{n+2r}}\;d\bar y\;d\bar x = &\int_{B_\varepsilon(0)} \int_W \frac{|\bar y |^{n-2\alpha+2r}}{|\bar x|^{n-2r}} \frac{(\varphi(\bar y))^2}{|\bar x - \bar y |^{n+2r}}\;d\bar y\;d\bar x \\+ &\int_{\complement W \backslash B_\varepsilon(0)} \int_W \frac{|\bar y |^{n-2\alpha+2r}}{|\bar x|^{n-2r}} \frac{(\varphi(\bar y))^2}{|\bar x - \bar y |^{n+2r}}\;d\bar y\;d\bar x.
       \end{aligned}
       \end{equation}
 Regarding the first integral in the right-hand side of \eqref{eq4.6}, the term $\frac{|\bar y|^{n-2\alpha + 2r}}{|\bar x - \bar y|^{n+2r}}$ is bounded and $\frac{1}{|x|^{n-2r}}$ is integrable over $B_\varepsilon(0)$. Hence, we obtain the estimate
  $$
  \int_{B_\varepsilon(0)} \int_W \frac{|\bar y |^{n-2\alpha+2r}}{|\bar x|^{n-2r}} \frac{(\varphi(\bar y))^2}{|\bar x - \bar y |^{n+2r}}\;d\bar y \;d\bar x \leq C_{W, \varepsilon} \int_{B_\varepsilon(0)} \frac{1}{|\bar x|^{n-2r}} \int_W |\varphi(\bar y)|^2\;d\bar y\;d\bar x. 
  $$
Regarding the second integral in \eqref{eq4.6}, we need to handle differently the case $2r < n$ and the case $2r \geq n$.
In the case where $2r < n$, the term $\frac{|\bar y|^{n-2\alpha + 2r}}{|\bar x|^{n-2r}}$ is bounded over $\complement W \backslash B_\varepsilon(0)$ as a consequence of the fact that $|x| \geq \varepsilon$, and thus we get
$$
\int_{\complement W \backslash B_\varepsilon(0)} \int_W \frac{|\bar y |^{n-2\alpha+2r}}{|\bar x|^{n-2r}} \frac{(\varphi(\bar y))^2}{|\bar x - \bar y |^{n+2r}}\;d\bar y\;d\bar x \leq C_{W, \varepsilon} [\varphi]^2_{H^r(\rn)}.
$$
  The case $2r \geq n$ is different, since $\frac{1}{|x|^{n-2r}}$ is no longer bounded over $\complement W \backslash B_\varepsilon(0)$. We point out that whenever $|x|\rightarrow \infty$, we have $\frac{|x|^{2r-n}}{|x-y|^{n+2r}} = O(\frac{1}{|x|^{2n}})$. Hence, we derive the estimate
  \begin{equation}
      \int_{\complement W \backslash B_\varepsilon(0)} \int_W \frac{|\bar y |^{n-2\alpha+2r}}{|\bar x|^{n-2r}} \frac{(\varphi(\bar y))^2}{|\bar x - \bar y |^{n+2r}}\;d\bar y\;d\bar x \leq C_{W,\varepsilon} \|\varphi\|_{L^2(\rn)}^2.
  \end{equation}
  In both cases we have that there is a constant $C>0$ depends only on $n, \alpha, r$ and $W$ such that
  \begin{equation}
      \int_{\complement W} \int_W \frac{|\bar y |^{n-2\alpha+2r}}{|\bar x|^{n-2r}} \frac{(\varphi(\bar y))^2}{|\bar x - \bar y |^{n+2r}}\;d\bar y\;d\bar x \leq C \|\varphi\|_{H^r(\rn)}.
  \end{equation}
       Gathering all the above estimates, we finally get the desired inequality \eqref{SKT}.
            \end{proof}

        \begin{lemma}\label{lemma_l1_kelvin}
            Let $U\subset\rn $ be a bounded open neighborhood of the origin $0$ and suppose that $u\in L^2(\rn)$. Then, the strong Kelvin transform $K_\alpha[u] \in L^1(U)$, hence $K_\alpha[u] \in L^1(\rn)$.
        \end{lemma}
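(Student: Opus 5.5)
The plan is to compute the $L^1(U)$-norm of $K_\alpha[u]$ directly with the inversion $y=\bar x=x/|x|^2$, as in the proof of Proposition \ref{prop_kelvin_regularity}, and then close with the Cauchy--Schwarz inequality. Since $U$ is bounded, fix $R>0$ with $U\subset B_R(0)$. The inversion maps $U\setminus\{0\}$ into $\{|y|>1/R\}$, and its Jacobian gives $dx=|y|^{-2n}\,dy$. Also $|x|^{\alpha-n}=|y|^{n-\alpha}$. The origin has measure zero, so it does not matter that $K_\alpha[u]$ is only defined on $\rnstar$.

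Carrying out the change of variables gives
$$\int_U |K_\alpha[u](x)|\,dx=\int_{U^\circ}|y|^{n-\alpha}|u(y)|\,|y|^{-2n}\,dy\le\int_{\{|y|>1/R\}}\frac{|u(y)|}{|y|^{n+\alpha}}\,dy .$$
By Cauchy--Schwarz this is bounded by
$$\|u\|_{L^2(\rn)}\left(\int_{\{|y|>1/R\}}|y|^{-2n-2\alpha}\,dy\right)^{1/2}.$$
In polar coordinates the last integral is a multiple of $\int_{1/R}^\infty \rho^{-n-2\alpha-1}\,d\rho$, which is finite because $n+2\alpha>0$. This yields $K_\alpha[u]\in L^1(U)$, with norm at most $C(n,\alpha,R)\|u\|_{L^2(\rn)}$.

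The step I expect to be the main obstacle is the passage to $L^1(\rn)$, that is, controlling the complementary region $\rn\setminus U$. Take $r>0$ with $B_r(0)\subset U$. The same inversion turns the integral over $\rn\setminus U$ into $\int_{\{|y|<1/r\}}|y|^{-n-\alpha}|u(y)|\,dy$. There, Cauchy--Schwarz fails, since $|y|^{-2n-2\alpha}$ is not integrable at the origin. So an $L^2$ assumption alone does not obviously control this piece.

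I would handle it using the setting in which the lemma is applied: $u$ vanishing near the origin, or $u$ supported away from $0$ as for $\varphi\in\widetilde H_0^r(W)$ with $0\notin\overline W$. In that case the weight $|y|^{-n-\alpha}$ is bounded on the support of $u$ within the bounded region $\{|y|<1/r\}$, and Cauchy--Schwarz on that bounded set finishes the argument. If no such support condition is available, I would state the global conclusion only as $K_\alpha[u]\in L^1_{\rm loc}$ near the origin. Together with local integrability away from $0$, which holds because $u\in L^2_{\rm loc}$ and the Kelvin weight is smooth there, this still makes $K_\alpha[u]$ a distribution in $\mathcal D'(\rn)$. That is what the weak Kelvin transform needs.
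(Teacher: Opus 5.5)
Your proof that $K_\alpha[u]\in L^1(U)$ is the paper's argument. You invert by $y=x/|x|^2$, land in $\{|y|>1/R\}$, and use $|y|^{-n-\alpha}\in L^2$ there. Your explicit Cauchy--Schwarz bound $C(n,\alpha,R)\|u\|_{L^2(\mathbb R^n)}$ just makes this quantitative.

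You were right to stop short of the global claim, and the doubt can be sharpened: the clause ``hence $K_\alpha[u]\in L^1(\mathbb R^n)$'' is false for general $u\in L^2(\mathbb R^n)$. The paper's proof never proves it; it only asserts ``notice that $K_\alpha[u]\in L^1(\mathbb R^n\setminus U)$''. Take $u=\chi_{B_1(0)}$. Then $K_\alpha[u](x)=|x|^{\alpha-n}\chi_{\{|x|>1\}}(x)$, and $\int_{\{|x|>1\}}|x|^{\alpha-n}\,dx=|S^{n-1}|\int_1^\infty\rho^{\alpha-1}\,d\rho=\infty$ for every $\alpha>0$. Your change of variables gives the exact criterion, since $\|K_\alpha[u]\|_{L^1(\mathbb R^n)}=\int_{\mathbb R^n}|y|^{-n-\alpha}|u(y)|\,dy$. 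So for $u\in L^2$, global integrability holds if and only if $\int_{B_\delta(0)}|y|^{-n-\alpha}|u(y)|\,dy<\infty$ for some $\delta>0$. This holds in particular when $u$ vanishes near the origin. No proof of the statement as written can exist, and your restricted version is the correct repair.

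Your repair also covers every later use of the lemma:
\begin{itemize}
\item In Theorem \ref{thm_reconstructio_kelvin}, $h=u-g$ vanishes on $B_1(0)$. So $K_\alpha[h]$ is supported in $\overline{B_1(0)}$, and your $L^1(U)$ bound already gives $K_\alpha[h]\in L^1(\mathbb R^n)$, which is what the Fourier-transform step needs.
\item In Proposition \ref{prop_weak_kelvin_equivalence}, only $\int|K_\alpha[u]\,\varphi|\,dx<\infty$ for $\varphi\in\overline{\mathcal S}_\alpha(\mathbb R^n)$ is required. This holds for every $u\in L^2$. Near $0$ it follows from your bound. At infinity, $|\varphi(x)|\lesssim|x|^{-n-\alpha}$ turns $\int_{\{|x|>R\}}|K_\alpha[u]\,\varphi|\,dx$ into at most a constant times $\int_{B_{1/R}(0)}|u(y)|\,dy$.
\end{itemize}
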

        
        \begin{proof}
        First, notice that $K_\alpha[u] \in L^1(\rn\setminus U)$. Second, using the change of variable $y:= \frac{x}{|x|^2}$, we get
            $$
            \begin{aligned}
            \int_U K_\alpha[u](x)\,dx &:= \int_U \frac{1}{|x|^{n-\alpha}} u\left ( \frac{x}{|x|^2} \right ) \,dx 
            \\&= \int_{U^\circ} \frac{|y|^{n-\alpha}}{|y|^{2n}} u(y)\,dy = \int_{U^\circ} \frac{1}{|y|^{n+\alpha}} u(y)\,dy.
            \end{aligned}
            $$
            The last integral is finite since $\frac{1}{|y|^{n+\alpha}} \in L^2(\rn \backslash B_\varepsilon)$, for every $\varepsilon > 0$. Thus, we can deduce that\\ $K_\alpha[u]=\frac{1}{|x|^{n-\alpha}} u\circ J \in L^1(U)$, and $K_\alpha[u] \in L^1(\rn)$.
        \end{proof}
       
       From now on and throughout the rest of the paper we will take $\alpha := 2s$. The following result is similar to the classical Riesz result \cite[Section 11]{riesz}.

        \begin{proposition}\label{prop_lfrac_kelvin}
            Let $\varphi\in \mathcal D(\rnstar)$, $0<s<1$, and
                $\alpha:=2s \in (0,2)$ be such that $n>\alpha$. Then,
                \begin{equation}\label{eq_kelvin_lfrac}
                \lfrac (K_\alpha[\varphi]) = \frac{1}{|x|^{2\alpha}}K_\alpha[\lfrac \varphi].
                \end{equation}
                 \end{proposition}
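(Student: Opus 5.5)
The plan is to prove \eqref{eq_kelvin_lfrac} pointwise, by a direct computation on the singular integral \eqref{eq11} combined with the inversion change of variables already used in Proposition \ref{prop_kelvin_regularity}. Fix $x\neq 0$ and write $\bar x:=x/|x|^2$. Set $u:=K_\alpha[\varphi]$. By Proposition \ref{prop_kelvin_regularity}, $u\in\mathcal D(\rnstar)$, so $\lfrac u(x)$ exists pointwise. To avoid principal-value issues, I will work with the absolutely convergent second-difference form
\[
\lfrac u(x)=\frac{C_{n,s}}{2}\int_{\rn}\frac{2u(x)-u(x+h)-u(x-h)}{|h|^{n+2s}}\,dh .
\]
Equivalently, I will use truncations $(-\Delta)^s_\varepsilon$ and check that the truncation regions transform into sets whose symmetric difference with Euclidean balls around $\bar x$ contributes $o(1)$. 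This works because $\varphi$ is smooth and the image of a small ball $B_\varepsilon(x)$ under inversion is a ball that is almost centred at $\bar x$, with centre shift of order $\varepsilon^2$.

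For the main computation, substitute $y=\bar z:=z/|z|^2$. This gives $dy=|z|^{-2n}\,dz$, the chordal formula $|x-y|=|\bar x-z|/(|\bar x||z|)$, and $u(y)=|z|^{n-2s}\varphi(z)$. After collecting powers, the integrand $\frac{u(x)-u(y)}{|x-y|^{n+2s}}\,dy$ becomes
\[
|\bar x|^{n+2s}\,\frac{|\bar x|^{n-2s}\varphi(\bar x)\,|z|^{2s-n}-\varphi(z)}{|\bar x-z|^{n+2s}}\,dz .
\]
I then split the numerator as
\[
\bigl(\varphi(\bar x)-\varphi(z)\bigr)+|\bar x|^{n-2s}\varphi(\bar x)\bigl(|z|^{2s-n}-|\bar x|^{2s-n}\bigr).
\]
After multiplying by $C_{n,s}$ and taking the limit, the first piece gives $\lfrac\varphi(\bar x)$. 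The second piece gives $-|\bar x|^{n-2s}\varphi(\bar x)\,\lfrac\bigl(|\cdot|^{2s-n}\bigr)(\bar x)$.

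The key auxiliary fact is that $\lfrac(|\cdot|^{2s-n})(z)=0$ for every $z\neq0$. This is where the hypothesis $n>\alpha=2s$ enters: it makes $|z|^{2s-n}\in\mathcal L^1_s(\rn)$ and a constant multiple of the fundamental solution. Indeed, $\mathcal F(|\cdot|^{2s-n})=c\,|\xi|^{-2s}$, so $\lfrac|\cdot|^{2s-n}=c'\delta_0$ in $\mathcal S'$. Since the function is smooth away from $0$, the distributional identity upgrades to the pointwise statement off the origin. I expect this, together with the justification that the limit procedures commute with the inversion, to be the main obstacle. If the Fourier argument is awkward near the origin, I would fall back on Riesz's classical computation \cite[Section 11]{riesz}.

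With the second piece vanishing, I obtain $\lfrac u(x)=|\bar x|^{n+2s}\lfrac\varphi(\bar x)=|x|^{-n-2s}\lfrac\varphi(\bar x)$. On the other hand, $\frac{1}{|x|^{2\alpha}}K_\alpha[\lfrac\varphi](x)=|x|^{-4s}|x|^{2s-n}\lfrac\varphi(\bar x)$, which is the same quantity. This yields \eqref{eq_kelvin_lfrac}.
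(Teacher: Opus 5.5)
Your argument is correct, but it follows a different route from the paper.

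\textbf{The paper's route.} The paper never works with the singular integral. It applies the inversion to the Riesz potential $I_\alpha v$ with $v=\lfrac\varphi$. There the chordal factor, the weight and the Jacobian cancel exactly, giving $I_\alpha v(x)=|\bar x|^{n-\alpha}I_\alpha\bar v(\bar x)$ with $\bar v(y)=|y|^{-n-\alpha}v(y/|y|^2)$ and no remainder. It then writes $K_\alpha[\varphi]=I_\alpha\bar v$ (because $I_\alpha v=\varphi$) and applies $\lfrac$, using that $\lfrac$ inverts $I_\alpha$.

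\textbf{Your route.} In your direct computation the powers of $|z|$ do not balance, so the extra term $-|\bar x|^{n-2s}\varphi(\bar x)\lfrac(|\cdot|^{2s-n})(\bar x)$ appears. It has to be killed by the $s$-harmonicity of $|z|^{2s-n}$ off the origin. You must also control the principal value through the geometry of inverted balls: the radius is $\rho\sim\varepsilon|\bar x|^2$, the centre lies at distance $O(\varepsilon^2)$ from $\bar x$, and $|G(z)|\lesssim|z-\bar x|$ for the combined numerator $G$, which gives an error $O(\rho^{2-2s})$. The second-difference form plays no real role, since inversion does not send the pairs $x\pm h$ to pairs symmetric about $\bar x$; it is the truncation argument that does the work.

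\textbf{Comparison.} Both routes rest on the same fact: $|x|^{2s-n}$ is a multiple of the fundamental solution of $\lfrac$ (equivalently $I_\alpha\lfrac=\mathrm{Id}$). They simply invoke it at different stages. The paper's version is shorter and free of principal values. However, it applies the inversion $\lfrac I_\alpha\bar v=\bar v$ to a function $\bar v$ that is in general not in $\mathcal S(\rn)$, and it leaves implicit the sense in which the final identity holds. Yours yields a genuinely pointwise identity at every $x\neq0$. It only uses regularity of $\varphi$ near $\bar x$ plus $\varphi\in\mathcal L^1_s(\rn)$ (equivalent to $K_\alpha[\varphi]\in\mathcal L^1_s(\rn)$), so it extends beyond $\mathcal D(\rnstar)$.

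\textbf{One small inaccuracy.} $|z|^{2s-n}\in\mathcal L^1_s(\rn)$ holds for all $n\ge1$ and $s\in(0,1)$, so that is not where $n>\alpha$ is used. In your argument the hypothesis enters through the identification $\mathcal F(|\cdot|^{2s-n})=c|\xi|^{-2s}$, which needs $|\xi|^{-2s}$ to be locally integrable.
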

            
            \begin{proof}
            We recall that the Riesz-potential $I_{\alpha}u$ of a 
            function $u$ is defined by
            $$
            I_{\alpha} u(x) = C\int_{\rn} \frac{u(y)}{|x-y|^{n-\alpha}}dy,
            $$
            where $C$ is a normalization constant depending on $\alpha$ and $n$ only.
            
            We use the notation $\displaystyle\bar x = \frac{x}{|x|^2}$. The chordal distance formula yields
            $\displaystyle
            |\bar x - \bar y| = \frac{|x-y|}{|x||y|}.$ 
            Let $\varphi\in \mathcal D(\rnstar)$ and  set $v:=\lfrac \varphi$. Letting $\displaystyle v(\bar x):= \frac{1}{|\bar x|^{n+\alpha}}v(x)$ and using the above chordal formula, we obtain that
            $$
            I_{\alpha} v(x) = C\int_{\rn} \frac{v(y)}{|x-y|^{n-\alpha}}\;dy = C\int_{\rn} \frac{(|\bar y|^{n+\alpha}\bar v(\bar y))(|\bar x||\bar y|)^{n-\alpha}}{|\bar x- \bar y|^{n-\alpha}}\;dy, \qquad \forall \;x \neq 0.
            $$
            Applying the change of coordinates $\displaystyle\bar y = \frac{y}{|y|^2}$ with Jacobian $\frac{1}{|\bar y|^{2n}}$ we get
            \begin{equation}\label{eq_riesz_equality}
               I_{\alpha} v (x) = C\int_{\rn} \frac{(\bar v(\bar y))(|\bar x|)^{n-\alpha}}{|\bar x- \bar y|^{n-\alpha}}\;d\bar y = |\bar x|^{n-\alpha} I_{\alpha} \bar v(\bar x). 
            \end{equation}
            
            Now, in order to avoid any confusion as of the variable of study, we introduce the change of variable as the proper conformal mapping
            $\displaystyle
                 J:\rnstar \rightarrow \rnstar,\; x \mapsto \frac{x}{|x|^2}$.
            Thus,  \eqref{eq_riesz_equality} becomes
            $I_{\alpha}v = (|x|^{n-\alpha}I_{\alpha}\bar v)\circ J$,
            which can be rewritten as $
            I_{\alpha}v\circ J = |x|^{n-\alpha}I_{\alpha}\bar v$,
            since $J$ is an involution. Since the Riesz potential and the fractional Laplacian are inverse operators of each other, we have that
            $$
            \varphi \circ J = |x|^{n-\alpha} I_{\alpha} \bar v \implies \lfrac \left (\frac{1}{|x|^{n-\alpha}} \varphi\circ J\right ) = \frac{1}{|x|^{n+\alpha}} (\lfrac \varphi)\circ J.
            $$
            Finally, we obtain the equality $
            \lfrac(K_\alpha[\varphi]) = \frac{1}{|x|^{2\alpha}} K_\alpha[\lfrac \varphi]
            $.
            \end{proof}
            
            We are now able to introduce a generalization of the strong Kelvin transform that will help us to extend it to the space of distributions. To simplify the notations we shall further denote the duality $\langle\cdot,\cdot\rangle_{\mathcal{D}'(\rnstar), \mathcal D(\rnstar)}$ by simply $\langle\cdot,\cdot\rangle_{\ddualstar}$.
            
            \begin{definition}[\bf Weak Kelvin Transform]
            Let $u\in \mathcal D'(\rnstar)$. We call the weak Kelvin transform of $u$ and denote it by $K^*_\alpha[u]$, the operator given by
            $$
            \langle K^*_\alpha[u], \varphi \rangle := \langle u, \frac{1}{|x|^{2\alpha}} K_\alpha[\varphi]\rangle_{\ddualstar}, \qquad \varphi \in \mathcal  D(\rnstar).
            $$
            \end{definition}
            
            We have the following result.
            
    \begin{proposition}\label{prop_weak_kelvin_equivalence}
                Let $u\in L^1_{\rm loc}(\rnstar)$. If $K_\alpha[u] \in L^1_{\rm loc}(\rnstar)$, then
        \begin{equation}\label{eq_weak_kelvin_equivalence}
                \langle K^*_\alpha[u], \varphi \rangle = \langle K_\alpha[u], \varphi \rangle_{\ddualstar}, \quad\forall \varphi \in \mathcal D(\rnstar).
                \end{equation}
                Moreover, if $u \in L^2(\rn)$, then the equality \eqref{eq_weak_kelvin_equivalence} holds with $\mathcal D(\rnstar)$ replaced with $\mathcal{\overline S}_\alpha(\rn)$. 
            \end{proposition}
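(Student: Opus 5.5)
The plan is to compute $\langle K^*_\alpha[u],\varphi\rangle$ directly, treating it as an integral since $u$ is a function, and then apply the inversion $y=x/|x|^2$. For $\varphi\in\mathcal D(\rnstar)$ the function $|x|^{-2\alpha}K_\alpha[\varphi]$ again lies in $\mathcal D(\rnstar)$ by Proposition \ref{prop_kelvin_regularity}, because $|x|^{-2\alpha}$ is smooth away from the origin. Since $u\in L^1_{\rm loc}(\rnstar)$, the pairing is the Lebesgue integral
$$\langle K^*_\alpha[u],\varphi\rangle=\int_{\rn} u(x)\,|x|^{-2\alpha}|x|^{-(n-\alpha)}\varphi\Big(\frac{x}{|x|^2}\Big)\,dx .$$
Next I substitute $x=y/|y|^2$, with Jacobian $|y|^{-2n}$ and $|x|=|y|^{-1}$. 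The weight becomes
$$|y|^{2\alpha}\,|y|^{n-\alpha}\,|y|^{-2n}=|y|^{-(n-\alpha)},$$
so the integral equals
$$\int_{\rn}|y|^{-(n-\alpha)}\,u\Big(\frac{y}{|y|^2}\Big)\varphi(y)\,dy=\int K_\alpha[u]\,\varphi\,dy .$$
This is exactly $\langle K_\alpha[u],\varphi\rangle_{\ddualstar}$, which is meaningful because $K_\alpha[u]\in L^1_{\rm loc}(\rnstar)$ by hypothesis. To justify the substitution, I note that the integrand is supported in the image of $\mathrm{supp}\,\varphi$ under the inversion, which is a compact set away from $0$ and $\infty$. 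On that set everything is integrable, so the change of variables for $L^1$ functions applies.

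For the second part I take $u\in L^2(\rn)$ and $\varphi\in\overline{\mathcal S}_\alpha(\rn)$, and run the same computation. Both sides now have to be interpreted as absolutely convergent integrals over $\rnstar$; the origin has Lebesgue measure zero, so this causes no trouble. On the right-hand side, Lemma \ref{lemma_l1_kelvin} gives $K_\alpha[u]\in L^1(\rn)$, and $\varphi$ is bounded, so $\int K_\alpha[u]\varphi$ converges absolutely. By Tonelli applied to $|u||\varphi|$, the same change of variables shows the left-hand side integral has the same absolute value. Hence the left side converges absolutely and the two integrals coincide.

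The main obstacle is interpretive. When $\varphi$ is no longer compactly supported in $\rnstar$, one must say what $\langle K^*_\alpha[u],\varphi\rangle$ means. I would take it to be the integral $\int u\,|x|^{-2\alpha}K_\alpha[\varphi]$ and check that this is the natural extension, via the estimate above. The near-origin behavior of $|x|^{-2\alpha}K_\alpha[\varphi]$ corresponds to the behavior of $\varphi$ at infinity. This is where the decay $(1+|x|^{n+\alpha})|\varphi|\le C$ in the definition of $\overline{\mathcal S}_\alpha$ should enter. It gives $\big||x|^{-2\alpha}K_\alpha[\varphi](x)\big|\le C|x|^{-n-\alpha}|x|^{n+\alpha}=C$ near $0$, so the test function stays bounded there. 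Away from the origin it behaves like $|x|^{-n-\alpha}$, which lies in $L^2$. Pairing with $u\in L^2$ is therefore well defined, and it is also continuous under approximation by $\mathcal D(\rnstar)$ cutoffs, which would give a density argument as an alternative route.
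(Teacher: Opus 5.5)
Your approach is the paper's: the same inversion $y=x/|x|^2$ identifies $\int K_\alpha[u]\varphi\,dx$ with $\int u\,|y|^{-2\alpha}K_\alpha[\varphi]\,dy$, and your first part is correct as written. The step that fails is in the second part, where you get absolute convergence of $\int K_\alpha[u]\varphi$ from ``$K_\alpha[u]\in L^1(\rn)$ and $\varphi$ bounded''.

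Only the near-origin half of Lemma \ref{lemma_l1_kelvin} is true; its global conclusion is false. For instance, $u=\chi_{B_1}\in L^2(\rn)$ gives $K_\alpha[u](x)=|x|^{\alpha-n}$ for $|x|>1$ and $0$ otherwise. This is not integrable at infinity, since $\alpha>0$. More generally, $K_\alpha[u](x)\approx u(0)|x|^{\alpha-n}$ for large $|x|$ when $u$ is continuous at $0$. So boundedness of $\varphi$ cannot control the region $|x|\to\infty$; the decay of $\varphi$ must.

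The paper does this by combining local integrability of $K_\alpha[u]$ near $0$ with the decay of $\varphi$ at infinity. Indeed, using $|\varphi(x)|\le C|x|^{-n-\alpha}$ and inverting, $\int_{|x|>R}|K_\alpha[u]||\varphi|\,dx\le C\int_{|y|<1/R}|u|\,dy<\infty$.

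Your last paragraph already contains a cleaner repair. Write $\bar x:=x/|x|^2$. Then $|x|^{-2\alpha}K_\alpha[\varphi](x)=|\bar x|^{n+\alpha}\varphi(\bar x)$ is bounded near $0$ and $O(|x|^{-n-\alpha})$ at infinity, so it lies in $L^2(\rn)$. Cauchy--Schwarz then makes $\int|u|\,|x|^{-2\alpha}|K_\alpha[\varphi]|\,dx$ finite. Tonelli with the same change of variables transfers this to $\int|K_\alpha[u]||\varphi|\,dx$, which gives absolute convergence of both sides and their equality. Run the argument in that direction and drop the appeal to $K_\alpha[u]\in L^1(\rn)$.

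Incidentally, your bounded-near-$0$ and decay-at-infinity description is also the correct form of the paper's closing $L^2$ estimate. The paper's stated bound $C_\varphi|x|^{-2n-2\alpha}$ is, by itself, not integrable at the origin.
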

            
            \begin{proof}
                Since $K_\alpha[u] \in L^1_{\rm loc}(\rnstar)$, we can express the duality in its integral form, that is,
                $$
                \langle K_\alpha[u], \varphi \rangle_{\mathcal D^{'}_\star} = \int_{\rn} \frac{1}{|x|^{n-\alpha}}u\left(\frac{x}{|x|^2} \right) \varphi(x)\;dx.
                $$
                Using the change of variable $y := \frac{x}{|x|^2}$, the above integral becomes
                \begin{align*}
                \int_{\rn} u(y) \frac{|y|^{n-\alpha}}{|y|^{2n}} \varphi \left (\frac{y}{|y|^2} \right)dy &= \int_{\rn} u(y) \frac{1}{|y|^{2\alpha}} \left ( \frac{1}{|y|^{n-\alpha}} \varphi \left( \frac{y}{|y|^2} \right)  \right)\;dy\\
                &=\int_{\rn} u(y) \frac{1}{|y|^{2\alpha}}K_\alpha[\varphi](y)\;dy.
                \end{align*}
                Thus,
                $$
                \langle K_\alpha[u], \varphi \rangle_{\mathcal D'_\star} = \langle u, \frac{1}{|x|^{2\alpha}} K_\alpha[\varphi]\rangle_{\ddualstar}=\langle K_\alpha^*[u],\varphi\rangle.
                $$
                Now, suppose that $u\in L^2(\rn)$ and $\varphi \in \mathcal {\overline S}_\alpha(\rn)$. Since $\varphi$ is rapidly decreasing, we have that  the integral does converge at infinity. 
                On the other hand, we do not know how it behaves near the origin. Using Lemma \ref{lemma_l1_kelvin}, we know that $K_\alpha[u] \in L^1(U)$ for any neighborhood $U$ of the origin. Thus, the integral
                $\displaystyle
                \int_{\rn} \frac{1}{|x|^{n-\alpha}}u\left(\frac{x}{|x|^2} \right) \varphi(x)\;dx
                $
                is well defined. The same computations we did for the previous case leads us to the conclusion that
                $$
                \langle K_\alpha[u], \varphi \rangle_{\overline {\mathcal S}_\alpha(\rn)'} = \int_{\rn} u(y) \frac{1}{|y|^{2\alpha}} \left ( \frac{1}{|y|^{n-\alpha}} \varphi \left( \frac{y}{|y|^2} \right)  \right)\;dy.
                $$
    The right-hand side converges thanks to the decreasing property of $\varphi$. Indeed, using the notation $\bar y := \frac{y}{|y|^2}$ we obtain that
    $\displaystyle
     \frac{1}{|y|^{n+\alpha}} \varphi\left( \frac{y}{|y|^2} \right) = |\bar y|^{n+\alpha} \varphi(\bar y).
    $
    By the definition of $\overline{\mathcal S}_\alpha(\rn)$, $|\bar y|^{n+\alpha} \varphi(\bar y)$ is uniformly bounded. Since 
    $$\left ( \frac{\varphi(x/|x|^2)}{|x|^{n+\alpha}}\right)^2 \leq C_{\varphi}\frac{1}{|x|^{2n+2\alpha}},$$  
    and as $2n + 2\alpha > n$, it follows that the function $\frac{1}{|x|^{n+\alpha}}\varphi(x/|x|^2)$ belongs to $L^2(\rn)$, which gives us the desired convergence. 
            \end{proof}
            
    \begin{proposition}\label{prop_weak_kelvin_regularity}
        Let $W\subset \rnstar$ be a nonempty bounded open set such that $\overline W \cap \{0\} = \emptyset$. If $u\in \mathcal D'(\rnstar)$, then $K^*_\alpha[u] \in \mathcal D'(W)$. Moreover, if $u \in H^{-r}(W)$ with $0< r <1$,  then $K^*_\alpha[u] \in H^{-r}(W^\circ)$.
    \end{proposition}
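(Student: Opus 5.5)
The proof is a duality argument built on Proposition \ref{prop_kelvin_regularity}. The key observation is that the test function in the definition of $K^*_\alpha[u]$, namely
\[
\psi:=\frac{1}{|x|^{2\alpha}}K_\alpha[\varphi],
\]
is again a strong Kelvin transform with a modified weight. Indeed, for $x\neq 0$,
\[
\frac{1}{|x|^{2\alpha}}K_\alpha[\varphi](x)=\frac{1}{|x|^{n+\alpha}}\varphi\Bigl(\frac{x}{|x|^2}\Bigr)=K_{-\alpha}[\varphi](x).
\]
Equivalently, $\psi=|x|^{-2\alpha}$ times $K_\alpha[\varphi]$, and $|x|^{-2\alpha}$ is smooth and bounded with bounded derivatives away from the origin.

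\textbf{Distributional part.} Let $\varphi\in\mathcal D(W)$. Since $0\notin\overline W$, we have $\varphi\in\mathcal D(\rnstar)$, and Proposition \ref{prop_kelvin_regularity} gives $K_\alpha[\varphi]\in\mathcal D(\rnstar)$ with support in $\overline{W^\circ}$. Multiplying by the smooth function $|x|^{-2\alpha}$ on $\rnstar$, we get $\psi\in\mathcal D(\rnstar)$. Hence $\langle K^*_\alpha[u],\varphi\rangle=\langle u,\psi\rangle_{\ddualstar}$ is well defined and linear in $\varphi$.

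For continuity, take $\varphi_k\to 0$ in $\mathcal D(W)$. The supports of the $\psi_k$ stay in the fixed compact set $\overline{W^\circ}\subset\rnstar$. The derivatives of $\psi_k$ are finite sums of derivatives of $\varphi_k\circ J$ multiplied by derivatives of $|x|^{-n-\alpha}$, which are bounded on that compact set, and by the chain rule through the smooth map $J$. So $\psi_k\to 0$ in $\mathcal D(\rnstar)$, and therefore $K^*_\alpha[u]\in\mathcal D'(W)$.

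\textbf{Sobolev part.} Assume $u\in H^{-r}(W^\circ)$; as I read it, the roles of $W$ and $W^\circ$ must be arranged so that the pairing matches. I will show that $\varphi\mapsto\langle u,\psi\rangle$ extends to a bounded functional on $\widetilde H_0^r$ of the relevant set, which is exactly membership in the negative-order space. The estimate needed is
\[
\|\psi\|_{H^r(\rn)}\le C\|\varphi\|_{H^r(\rn)},\qquad \varphi\in\mathcal D(W).
\]
To obtain it, I will repeat the proof of Proposition \ref{prop_kelvin_regularity} with the exponent $n-\alpha$ replaced by $n+\alpha$; that is, apply the same argument to $K_{-\alpha}$. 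In that proof the exponent only entered through bounded factors $|\bar x|^{\cdot}$ on the set $\{m\le|\bar x|\le M\}$ and through the Mean Value Theorem bound \eqref{RHS-I}, so nothing changes.

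Alternatively, write $\psi=\chi\,|x|^{-2\alpha}K_\alpha[\varphi]$ with $\chi\in\mathcal D(\rnstar)$ equal to $1$ on $\overline{W^\circ}$. Multiplication by the smooth compactly supported function $\chi|x|^{-2\alpha}$ is bounded on $H^r(\rn)$ for $0<r<1$; this follows from the same splitting $(ab)(x)-(ab)(y)=a(x)(b(x)-b(y))+b(y)(a(x)-a(y))$ used above. Combining with \eqref{SKT} gives the estimate.

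Then
\[
|\langle K^*_\alpha[u],\varphi\rangle|\le\|u\|_{H^{-r}}\|\psi\|_{H^r}\le C\|u\|_{H^{-r}}\|\varphi\|_{H^r}.
\]
By density of $\mathcal D$ in $\widetilde H_0^r$, this yields the conclusion, with $\|K^*_\alpha[u]\|_{H^{-r}}\le C\|u\|_{H^{-r}}$.

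\textbf{Main obstacle.} The main difficulty is bookkeeping rather than analysis: making the dualities consistent. The pairing $\langle u,\psi\rangle$ requires $\psi$ to lie in the predual of the space containing $u$, and $\psi$ is supported in $W^\circ$. So I must check that the statement's domains ($H^{-r}(W)$ versus $W^\circ$) match, using that $J$ is an involution, $(W^\circ)^\circ=W$. I must also check that $\widetilde H_0^r$ is the correct predual for $H^{-r}$ as defined in the paper, which is immediate when $r<1/2$ and otherwise should be read with $\widetilde H^{-r}$. I will fix the convention so that $u$ lives on the set where $\psi$ is supported, and then the above chain closes.
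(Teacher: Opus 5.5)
Your proposal is correct and follows essentially the same route as the paper. You pair $u$ with $\psi=|x|^{-2\alpha}K_\alpha[\varphi]$, invoke Proposition \ref{prop_kelvin_regularity} together with boundedness of multiplication by the smooth weight $|x|^{-2\alpha}$ away from the origin, and conclude by duality; your sequential continuity check in $\mathcal D$ is in fact more careful than the paper's sup-norm bound. No relabeling of $W$ and $W^\circ$ is needed for the Sobolev part: for $\varphi\in\mathcal D(W^\circ)$ the test function $\psi$ is already supported in $W$, so $u\in H^{-r}(W)$ pairs with it exactly as stated, which is how the paper proceeds.
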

    
    \begin{proof}
            First of all, $K^*_\alpha[u]$ is naturally linear. Indeed, let $\lambda \in \mathbb R$ and $\varphi_1, \varphi_2 \in \mathcal D(W)$. Then,
            $$
            \begin{aligned}
               \left \langle K^*_\alpha[u], \lambda\varphi_1 + \varphi_2\right\rangle &= \langle u, \frac{1}{|x|^{2\alpha}}K_\alpha[\lambda\varphi_1+\varphi_2]\rangle_{\ddualstar}
                \\&= \left \langle u, \frac{1}{|x|^{2\alpha}}\left [ \frac{1}{|x|^{n-\alpha}} (\lambda\varphi_1+\varphi_2)\circ J\right] \right \rangle_{\ddualstar}
                \\&= \lambda\langle u, \frac{1}{|x|^{2\alpha}} K_\alpha[\varphi_1]\rangle_{\ddualstar}+\langle u, \frac{1}{|x|^{2\alpha}} K_\alpha[\varphi_2]\rangle_{\ddualstar}\\
                &= \lambda \langle K^*_\alpha[u], \varphi_1 \rangle+ \langle K^*_\alpha[u],  \varphi_2\rangle.
            \end{aligned}
            $$
            Moreover, $K^*_\alpha[u]$ is bounded over $\mathcal D(W)$. Indeed, denoting $W^\circ := J(W)$ the reflection of $W$ through the sphere inversion, we know that $\frac{1}{|x|^{2\alpha}}$ is bounded over $W^\circ$ with a constant that depends only on $W$. The upper bound of $\overline{W^\circ}$ in measure is the inverse of the lower bound of $\overline W$ in measure, which is strictly positive, since $\overline W$ is compact in $\rnstar$. Thus, we get the inequalities
            $$
            |\langle K^*_\alpha[u], \varphi\rangle_{\mathcal D'(W)}| \leq \|u\|_{\mathcal D'(\rnstar)} \sup\limits_{x\in W^\circ}\left|\frac{1}{|x|^{2\alpha}} K_\alpha[\varphi](x) \right| \leq C_{W}\|u\|_{\mathcal D'(\rnstar)} \sup_{x\in W}|\varphi(x)|.
            $$
            
            Suppose now that $u \in H^{-r}(W)$ and $\varphi \in \mathcal D(W^\circ)$ with $r>0$.
            By definition,
            $$
            \langle K^*_\alpha[u], \varphi \rangle  = \langle u, \frac{1}{|x|^{2\alpha}} K_\alpha[\varphi]\rangle_{\ddualstar}.
            $$
            We know from Proposition \ref{prop_kelvin_regularity} that $K_\alpha[\varphi]\in \widetilde H_0^r(W)$. Using very similar arguments as in the proof of Proposition \ref{prop_kelvin_regularity}, and since $\frac{1}{|x|^{2\alpha}}$ is bounded and locally Lipschitz continuous over $W$, we can also deduce that $\frac{1}{|x|^{2\alpha}}K_\alpha[\varphi] \in \widetilde H_0^r(W)$.
 Thanks to this, we can conclude that
            $$
            |\langle K^*_\alpha[u], \varphi\rangle | \leq C_{n,\alpha, W}\|u\|_{H^{-r}(W)}\|\varphi\|_{H^r(\rn)},
            $$
            and the proof is finished.
                 \end{proof}

            Proposition \ref{prop_weak_kelvin_equivalence} shows that the weak Kelvin transform coincides with the strong Kelvin transform in some sense. What we would like on top of that, is a result similar to Proposition \ref{prop_lfrac_kelvin} showing that the information carried by the fractional Laplacian is conserved through the weak Kelvin transform,  even when the fractional Laplacian is only defined in a weak sense.

            \begin{proposition}\label{prop_weak_kelvin_lfrac}
                Let $u \in 
                L^2(\rn)$, $0<s<1$ and $\alpha := 2s$ be such that $n>\alpha$. Then, for all $\varphi \in \mathcal D(\rnstar)$, we have that
                \begin{equation}\label{eq_weak_kelvin_lfrac}
                \langle K^*_\alpha[\lfrac u], \varphi \rangle = \langle \lfrac K_\alpha[u], |x|^{2\alpha}\varphi \rangle_{\mathcal D'(\rnstar)}.
                \end{equation}
            \end{proposition}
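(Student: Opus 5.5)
The plan is to unfold both sides of \eqref{eq_weak_kelvin_lfrac} into integrals against $u$ and show that the two test functions paired with $u$ coincide. Throughout, the distributional fractional Laplacian of $u\in L^2(\rn)$ is read as $\langle \lfrac u,\psi\rangle=\int_{\rn} u\,\lfrac\psi\,dx$. For the left-hand side, the definition of the weak Kelvin transform gives
$$
\langle K^*_\alpha[\lfrac u],\varphi\rangle=\Big\langle \lfrac u,\frac{1}{|x|^{2\alpha}}K_\alpha[\varphi]\Big\rangle_{\ddualstar}.
$$
Since $\varphi\in\mathcal D(\rnstar)$, Proposition \ref{prop_kelvin_regularity} gives $K_\alpha[\varphi]\in\mathcal D(\rnstar)$, and therefore $\psi:=|x|^{-2\alpha}K_\alpha[\varphi]\in\mathcal D(\rnstar)$ as well. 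Hence the left-hand side equals $\int_{\rn} u\,\lfrac\psi\,dx$.

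For the right-hand side, $|x|^{2\alpha}\varphi\in\mathcal D(\rnstar)$, so
$$
\langle \lfrac K_\alpha[u],|x|^{2\alpha}\varphi\rangle=\int_{\rn}K_\alpha[u]\,\lfrac(|x|^{2\alpha}\varphi)\,dx.
$$
This integral makes sense because $K_\alpha[u]\in L^1_{\rm loc}$ by Lemma \ref{lemma_l1_kelvin}. The test function $\lfrac(|x|^{2\alpha}\varphi)$ lies in $\overline{\mathcal S}_{2s}(\rn)$ by Silvestre's lemma; it is not compactly supported, so I also need decay at infinity. Here $K_\alpha[u]$ near infinity corresponds to $u$ near the origin, and the change of variables below shows that this integral equals an integral of $u$ against an $L^2$ function.

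The key computation is the change of variables $y=x/|x|^2$, exactly as in the proof of Proposition \ref{prop_weak_kelvin_equivalence}. It turns the right-hand side into
$$
\int_{\rn}u(y)\,\frac{1}{|y|^{2\alpha}}K_\alpha\big[\lfrac(|x|^{2\alpha}\varphi)\big](y)\,dy.
$$
Now I apply Proposition \ref{prop_lfrac_kelvin} with the test function $\eta:=|x|^{2\alpha}\varphi$, but in the reverse direction. Since $K_\alpha$ is an involution, $K_\alpha[K_\alpha[f]]=f$, because $|x|^{-(n-\alpha)}|\bar x|^{-(n-\alpha)}=1$. Set $\chi:=K_\alpha[\eta]\in\mathcal D(\rnstar)$. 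Then \eqref{eq_kelvin_lfrac} applied to $\chi$ reads
$$
\lfrac\eta=|x|^{-2\alpha}K_\alpha[\lfrac\chi],
$$
and applying $K_\alpha$ to both sides gives
$$
K_\alpha[\lfrac\eta]=|x|^{2\alpha}\lfrac\chi,
$$
because $K_\alpha[|x|^{-2\alpha}g]=|x|^{2\alpha}K_\alpha[g]$. Therefore
$$
|y|^{-2\alpha}K_\alpha[\lfrac\eta]=\lfrac\chi,\qquad \chi=K_\alpha[|x|^{2\alpha}\varphi]=|x|^{-2\alpha}K_\alpha[\varphi]=\psi.
$$
So both sides equal $\int u\,\lfrac\psi$, which proves \eqref{eq_weak_kelvin_lfrac}.

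The main obstacle is justifying the change of variables and the convergence of the right-hand integral. The function $u$ is only in $L^2$, and $K_\alpha[u]$ is paired with a noncompactly supported function in $\overline{\mathcal S}_{2s}$. After the substitution the integrand becomes $u\,\lfrac\psi$, with $\lfrac\psi\in\overline{\mathcal S}_{2s}\subset L^2(\rn)$, so it is absolutely integrable. Because the change of variables is a diffeomorphism of $\rnstar$ and the origin is a null set, Tonelli applied to $|u|$ then justifies the computation, along the lines of the second half of Proposition \ref{prop_weak_kelvin_equivalence}.
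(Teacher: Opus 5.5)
Your proposal is correct and follows the paper's proof. The paper uses the same ingredients: the definition of $K^*_\alpha$, the identity $K_\alpha[|x|^{2\alpha}\varphi]=|x|^{-2\alpha}K_\alpha[\varphi]$, Proposition \ref{prop_lfrac_kelvin} for $\eta=|x|^{2\alpha}\varphi$, and the change of variables $y=x/|x|^2$ from Proposition \ref{prop_weak_kelvin_equivalence}; it chains them from left to right where you unfold both sides and meet in the middle, and your detour through $\chi$ and the involution is unnecessary since Proposition \ref{prop_lfrac_kelvin} applied directly to $\eta$ already gives $(-\Delta)^s\chi=|x|^{-2\alpha}K_\alpha[(-\Delta)^s\eta]$.
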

    
    \begin{proof}
                Let $\varphi\in \mathcal D(\rnstar)$. By definition, we have the following equalities:
                $$
                \begin{aligned}
                    \langle K^*_\alpha[\lfrac u], \varphi\rangle &= \langle \lfrac u, |x|^{-2\alpha}K_\alpha[\varphi]\rangle_{\mathcal D'_\star}
                    \\ &= ( u, \lfrac \left [|x|^{-2\alpha} K_\alpha[\varphi]  \right] )_{L^2(\rn)}
                    \\&= ( u, \lfrac (K_\alpha[|x|^{2\alpha}\varphi]) )_{L^2(\rn)}
                    \\&= ( u, |x|^{-2\alpha} K_\alpha[\lfrac (|x|^{2\alpha}\varphi)] )_{L^2(\rn)}
                    \\&= \langle K_\alpha[u], \lfrac(|x|^{2\alpha}\varphi) \rangle_{\mathcal {\overline S}_\alpha'(\rn_\star)}
                    \\&= \langle \lfrac K_\alpha[u], |x|^{2\alpha }\varphi \rangle_{\mathcal D'(\rnstar)},
                \end{aligned}
                $$
                where we used Proposition \ref{prop_lfrac_kelvin} in the fourth line, and Proposition \ref{prop_weak_kelvin_equivalence} in the fifth line. 
            \end{proof}
                
			
			\section{Reconstruction algorithms and applications}\label{Sec-RAA}
            
            This part of the article is devoted to the introduction of two reconstruction algorithms related to the UCP. The main idea is that, since there is only one function $u$ corresponding to the data
            $
            (u|_U, [\lfrac u]|_U)
            $
            in an open set $U\subset\rn$, then these data should suffice to reconstruct $u$ in the whole space. We use the following strategies.
            \begin{itemize}
            \item The first strategy makes use of the Kelvin transform to convert the unknown domain of $u$ (outside the set $U$) into a bounded domain without losing the information of the fractional Laplacian. 

            \item The second strategy is based on Green's functions related to the homogeneous and nonhomogeneous fractional Poisson equations in the ball. 
            \end{itemize}
            Combining both strategies leads to a Fredholm equation of the first kind which can be solved through classical or regularization tactics. 
            
We then bring up two applications to these algorithms. First, an alternative approach to the inverse Robin problem tackled in Section 3, and second an application to dynamical systems.
            
\subsection{Reconstruction algorithms}

Here are the reconstruction algorithms we propose.

    \subsubsection{Reconstruction with Kelvin transform}
    Theorem \ref{thm_grsu_tikonov} allows us to reconstruct from exterior measurements a function which support lies in a bounded domain. In this case though, we do not have any assumptions on the boundedness of our function's support. Thus, the main idea of the next result is to perform a series of invertible transforms that allow us to deal with a function with a compact support, while preserving the information carried by the fractional Laplacian. Recall that $0<s<1$ and $\alpha:=2s$.

    \begin{theorem}\label{thm_reconstructio_kelvin}
                Let $n\geq 1$ and $\alpha \in (0,2)$ be such that $n> \alpha$. Let $W\subset \rn$ be a nonempty bounded open set and  $u\in H^s(\rn)$. Then, $u$ can be reconstructed by the simple knowledge of $u|_{W}$ and $[\lfrac u]|{_W}$.
            \end{theorem}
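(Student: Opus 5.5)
We reduce the statement to Theorem \ref{thm_grsu_tikonov}, which reconstructs a compactly supported $H^s$ function from exterior fractional-Laplacian data. The Kelvin transform will carry the unknown part of $u$ near infinity into a bounded region. Since $W$ is open and nonempty, we first translate (the fractional Laplacian commutes with translations) and shrink to a ball. We may then assume that $\overline{B_{2\delta}(x_0)}\subset W$ with $x_0\neq 0$, and even that $0\in W$ after relabelling. Concretely, we pick a point $p\in W$ and a small radius $\delta$ with $B_{2\delta}(p)\subset W$, and translate so that $p=0$. Thus $B_{2\delta}(0)\subset W$, and both $u$ and $\lfrac u$ are known on $B_{2\delta}(0)$.

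Next we cut off the known part. We choose $\chi\in\mathcal D(B_{2\delta}(0))$ with $\chi\equiv1$ on $B_\delta(0)$, and split $u=\chi u+(1-\chi)u$. The function $\chi u$ is completely known, so $\lfrac(\chi u)$ is computable everywhere. Hence $w:=(1-\chi)u\in H^s(\rn)$ has known $\lfrac w=\lfrac u-\lfrac(\chi u)$ on $B_\delta(0)$. Moreover $w$ vanishes on $B_\delta(0)$. We now apply the weak Kelvin transform with $\alpha=2s$. Since $w$ vanishes near the origin, $K_\alpha[w]$ is supported in $\overline{B_{1/\delta}(0)}$, which is a bounded set. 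By Proposition \ref{prop_weak_kelvin_lfrac}, for $\varphi\in\mathcal D(B_\delta(0)^\circ)$, that is, $\varphi$ supported in $\{|x|>1/\delta\}$,
\[
\langle \lfrac K_\alpha[w], \varphi\rangle = \langle K^*_\alpha[\lfrac w], |x|^{-2\alpha}\varphi\rangle = \langle \lfrac w, |x|^{-2\alpha}K_\alpha[|x|^{-2\alpha}\varphi]\rangle_{\ddualstar}.
\]
The right-hand side involves only $\lfrac w$ on $B_\delta(0)\setminus\{0\}$, which is known. So $\lfrac K_\alpha[w]$ is known on the open exterior set $\{|x|>1/\delta\}$. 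We then take $\Omega:=B_{1/\delta'}(0)$ for some $\delta'<\delta$, and $W':=\{1/\delta'<|x|<1/\delta'+1\}$ with $\overline{W'}\cap\overline\Omega=\emptyset$. Theorem \ref{thm_grsu_tikonov} then reconstructs $K_\alpha[w]$ via the Tikhonov scheme \eqref{PT}. Finally we invert: $K_\alpha$ is an involution on $\rnstar$, so $w=K_\alpha[K_\alpha[w]]$, and $u=\chi u+w$ after undoing the translation.

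The main obstacle is verifying that $K_\alpha[w]\in H^s(\rn)$ with support in $\overline\Omega$, which Theorem \ref{thm_grsu_tikonov} requires. Proposition \ref{prop_kelvin_regularity} covers only functions supported in a bounded set away from the origin. Here $w$ is supported away from $0$ but is unbounded, so $K_\alpha[w]$ is bounded in support but may be singular at $0$. The $L^2$ part is fine: $\int|K_\alpha[w]|^2=\int|y|^{-2\alpha}|w|^2$ is finite since $w=0$ near $0$. For the Gagliardo seminorm, I would repeat the splitting of Proposition \ref{prop_kelvin_regularity}, using the chordal identity to transfer the seminorm to one of $w$ with weights $|\bar x|^{n-2\alpha+2r}|\bar y|^{-n+2r}$ at $r=s$. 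I would then check that these weights stay bounded for large $|\bar x|,|\bar y|$ when $\alpha=2s<n$. If this direct estimate fails, the fallback is to use the weak formulation throughout. We would treat $K_\alpha[w]$ as a distribution in $H^{-r}$, use Lemma \ref{lemma_unique_continuation} for a general $r$, and apply the GRSU reconstruction in the corresponding negative-order setting, where the Tikhonov minimization still converges by the injectivity and dense range of $L$.
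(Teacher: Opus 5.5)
Your reduction (translate, set $w=(1-\chi)u$, take the Kelvin transform, get exterior data for $\lfrac K_\alpha[w]$ from Proposition~\ref{prop_weak_kelvin_lfrac}, apply Theorem~\ref{thm_grsu_tikonov}, invert the involution) agrees with the paper's up to the point where Theorem~\ref{thm_grsu_tikonov} requires $K_\alpha[w]\in H^s(\rn)$. The paper does not prove this membership. It instead reconstructs $\psi\star K_\alpha[h]$ and then deconvolves in Fourier space, using $K_\alpha[h]\in L^1(\rn)$. You want to prove the membership itself. That would give a cleaner scheme: Tikhonov would converge in $H^s$ to $K_\alpha[w]$, and there is no division by $\mathcal F(\psi)$.

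\textbf{The gap.} This is exactly the step you leave open, and the argument you indicate does not work. Take $r=s$, $\alpha=2s$ and write $\bar x=x/|x|^2$. The weight in Proposition~\ref{prop_kelvin_regularity} is then $|\bar x|^{n-2s}|\bar y|^{2s-n}=(|\bar x|/|\bar y|)^{n-2s}$. It is unbounded on the unbounded set where $w$ lives. On $\{|\bar x|>2|\bar y|\}$, each of the two pieces of that asymmetric splitting behaves like $w(\bar y)^2|\bar y|^{2s-n}|\bar x|^{-4s}$. This is not integrable in $\bar x$ at infinity when $4s\le n$; only the sum of the two pieces is. Your fallback is not available either: Theorem~\ref{thm_grsu_tikonov} and the injectivity lemma preceding it are formulated only on $\widetilde H_0^s(\Omega)$.

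\textbf{How to close it.} You can use the paper's own Proposition~\ref{prop_lfrac_kelvin}. For $\varphi\in\mathcal D(\rnstar)$ and $n>2s$,
\[
\|(-\Delta)^{s/2}K_\alpha[\varphi]\|_{L^2(\rn)}^2=\int_{\rn}K_\alpha[\varphi]\,|x|^{-2\alpha}K_\alpha[\lfrac\varphi]\,dx=\int_{\rn}|x|^{-2n}\varphi(\bar x)\,(\lfrac\varphi)(\bar x)\,dx=\|(-\Delta)^{s/2}\varphi\|_{L^2(\rn)}^2 .
\]
So $K_\alpha$ preserves the $\dot H^s$ seminorm. Moreover, $\|K_\alpha[f]\|_{L^2}^2=\int|y|^{-4s}f^2\,dy\le(\delta/2)^{-4s}\|f\|_{L^2}^2$ whenever $f=0$ on $B_{\delta/2}(0)$.

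Now approximate $w$ in $H^s(\rn)$ by test functions supported in $\{|x|>\delta/2\}$: mollify at a scale below $\delta/2$, then cut off at infinity. Their Kelvin transforms form a Cauchy sequence in $H^s(\rn)$ and converge in $L^2$ to $K_\alpha[w]$. Hence $K_\alpha[w]\in H^s(\rn)$, supported in $\overline{B_{1/\delta}(0)}$, and the rest of your argument goes through.

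A direct estimate also works if you split the domain differently. On $\{|\bar x|\sim|\bar y|\}$, use the mean value theorem. On $\{|\bar x|>2|\bar y|\}$, bound the unsplit difference using $|\bar x-\bar y|\ge|\bar x|/2$. Everything is then controlled by $[w]_{H^s}^2+\int|y|^{-2s}w^2\,dy$.

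A minor slip: your $\overline{W'}$ meets $\overline\Omega$ on $\{|x|=1/\delta'\}$, so take the annulus strictly outside.
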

            
            \begin{proof}
                The UCP for the fractional Laplacian states that $u$ is the only function corresponding to the data $(u|_{W}, [\lfrac u]|_{W})$. We suppose without loss of generality that $W$ contains the open ball $B_1(0)$. The results remain coherent after translation and scaling.
                To avoid any problem at infinity after the Kelvin transform, we deal with a different function from $u$. Using the extension property in Definition \ref{ext-prop}, let $g \in H^s(\rn)$ be an extension of $u$ outside $B_1(0)$. The extension property is a constructive result, thus $g$ can actually be computed. We define the function $h:= u-g$.
                Naturally, $h|_{B_1(0)} \equiv 0$ and $h \in H^s(\rn)$. 
                Taking the weak Kelvin transform of $h$ we point out that according to Proposition \ref{prop_weak_kelvin_lfrac} both $K_\alpha[h]$ and $\lfrac K_\alpha[h]$ are known outside $B_1(0)$, but not inside. Moreover, since $h|_{B_1(0)}$ is null, it follows that $K_\alpha[h]$ vanishes outside the ball. Hence, we have a compactly supported function which fractional Laplacian is known outside its support. But we cannot apply Theorem \ref{thm_grsu_tikonov} just yet since we do not know whether $K_\alpha[h]$ has $H^s(\rn)$ regularity, especially around the origin. Thus, we use a convolution-deconvolution scheme.
                 
                Let $\psi$ be a compactly supported smooth function such that $B_1(0) \subset \text{supp}(\psi)$. 
                The function $\psi$ could be a positive definite smooth radial function for example. Explicit formulas of such functions can be found in Wu \cite{CompactRadial}. We then define $\tilde h := \psi \star K_\alpha[h]$.
                By the regularization property of convolution, $\tilde h$ is smooth. It is also compactly supported as being the convolution of two compactly supported functions. 

                Now, as stated before, $\lfrac K_\alpha[h]$ is known in $H^{-s}(\rn \backslash B_1(0))$ through the weak Kelvin transform $K^*_\alpha[\lfrac h]$ and according to Proposition \ref{prop_weak_kelvin_lfrac}. Using the properties of convolution we point out that
                $$
                \lfrac \tilde h(x) = \lfrac (K_\alpha[h] \star \psi)(x) = (\lfrac K_\alpha[h] \star \psi)(x) = \langle \lfrac K_\alpha[h], \psi(x-t)\rangle_{\mathcal D'(\rn)}.
                $$
                If $|x|$ is large enough, then the support of $\psi(x-t)$ is translated. Thus, $\text{supp}(\psi(x-t)) \cap B_1(0) = \emptyset$. This means that, for sufficiently large values of $|x|$, $\lfrac \tilde h(x)$ depends only on $\psi$ and known values of $\lfrac K_\alpha[h]$. 
                
                Following this intuition, let $U\subset \rn$ be an open set such that
                $$
                \overline U \cap \text{supp}(\tilde h) = \emptyset \text{ and supp}(\psi(x-t)) \cap B_1(0) = \emptyset, \qquad\forall x \in U.
                $$
                Then, $[\lfrac \tilde h]|_{U}$ is known and, according to the regularization result in Theorem \ref{thm_grsu_tikonov}, $\tilde h$ can be reconstructed from the knowledge of $[\lfrac \tilde h]|_{U}$. Our hope is that we can also reconstruct $K_\alpha[h]$ inside the ball. If we turn our attention to the frequencies domain, the Fourier transform of $\tilde h$ yields
                $
                    \mathcal F(\tilde h) = \mathcal F(\psi) \cdot \mathcal F(K_\alpha[h]).
                $
                Since $K_\alpha[h] \in L^1(\rn)$ by Lemma \ref{lemma_l1_kelvin}, it follows that its Fourier transform is well defined. Since $\psi$ is a compactly supported function, its Fourier transform does not vanish on an open set. Moreover, $\psi$ is positive definite, and thus the zeros of $\mathcal F(\psi)$ are discrete and isolated. Hence, $\mathcal F(K_\alpha[h])$ can be reconstructed almost everywhere. Taking the inverse Fourier transform, we can thus recover $K_\alpha[h]$. Finally, we apply the Kelvin transform to $K_\alpha[h]$, which yields $h$ itself. Recalling that
                $u:= h+g$,
                and since $g$ is known by construction, the proof is finished.         
            \end{proof}
            
\begin{remark}
    If one has the \textit{a priori} information that $u$ vanishes fast enough when $|x| \rightarrow \infty$, then, the convolution-de-convolution part can be skipped, since it was only meant to bypass the singularity at the origin generated by the Kelvin transform.
\end{remark}

\begin{remark}
    We treated the case where the function is known inside the unit ball for simplicity purposes. But these results can be generalized to any ball in a pretty straightforward fashion.
\end{remark}

    \subsubsection{Reconstruction with Green functions}
    An alternative approach can be used to recover exterior values of $u$. A very useful property of the fractional Laplacian is that explicit formulas are known for the fractional Poisson Kernel in the ball. We refer the reader to Bucur \cite{greenball} for a thorough analysis. We will only make use of the following results. Throughout this subsection, without any mention, for a real number $r>0$, we let $B_r:=B_r(0)$, the open ball in $\rn$ of center $\{0\}$ and radius $r$.

    \begin{theorem}\cite[Theorem 2.10]{greenball}\label{thm_poisson_nonhomogeneous}
        Let $r>0$, $g \in \mathcal L^1_s(\rn) \cap C(\rn)$ and 
        \begin{equation}\label{eq_poisson_kernel}
        u_g(x) := 
        \begin{cases}
            \displaystyle \int_{\rn\backslash B_r} P_r(y,x) g(y) \, dy & \text{ if } x \in B_r,   \\
             g(x)& \text{ if }x\in \rn\backslash B_r,
        \end{cases}
        \end{equation}
        where
        \begin{equation}\label{eq-P-alpha}
            P_r(y,x) := c_{n,s} \left( \frac{r^2 - |x|^2}{|y|^2-r^2}\right )^s \frac{1}{|x-y|^n}, \;\qquad (x, y) \in B_r \times (\rn \backslash B_r).
        \end{equation}
 Then, $u_g$ is the unique pointwise continuous solution of the Dirichlet problem
        \begin{equation}\label{eq_poisson_nonhomogeneous}
            \begin{cases}
                 \lfrac u = 0 &  \text{ in } B_r,\\
                 u = g & \text{ in } \rn \backslash B_r. 
            \end{cases} 
        \end{equation}
    \end{theorem}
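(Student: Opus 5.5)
The plan is to verify directly that $u_g$ solves \eqref{eq_poisson_nonhomogeneous}, and then to obtain uniqueness from a maximum principle. Continuity and the exterior condition are immediate, so most of the work concerns the equation inside $B_r$.

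\textbf{Step 1: reduction to $r=1$.} Set $x=r\tilde x$ and $y=r\tilde y$. Then $P_r(y,x)\,dy = P_1(\tilde y,\tilde x)\,d\tilde y$, and $(-\Delta)^s$ scales by $r^{-2s}$. Hence it suffices to treat $r=1$.

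\textbf{Step 2: the Poisson kernel is $s$-harmonic in $B_1$.} The key claim is that, for each fixed $y$ with $|y|>1$, the function $x\mapsto P_1(y,x)$ satisfies
\[
(-\Delta)^s_x P_1(y,\cdot)(x) = C_{n,s}\,|x-y|^{-n-2s} \qquad \text{in } B_1,
\]
where $P_1(y,\cdot)$ is extended by $0$ outside $B_1$. I would obtain this from the Green function $G(x,z)$ of the ball in its explicit Riesz form,
\[
G(x,z)=\kappa\,|x-z|^{2s-n}\int_0^{\rho(x,z)} \frac{t^{s-1}}{(t+1)^{n/2}}\,dt,
\qquad
\rho(x,z)=\frac{(1-|x|^2)(1-|z|^2)}{|x-z|^2},
\]
together with the identity
\[
P_1(y,x)=C_{n,s}\int_{B_1}\frac{G(x,z)}{|z-y|^{n+2s}}\,dz .
\]
This identity expresses the nonlocal flux produced by the jump of the exterior data. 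Proving it requires an explicit computation, which I would carry out with the Kelvin transform: the identity \eqref{eq_kelvin_lfrac} of Proposition \ref{prop_lfrac_kelvin} maps the point $y$ to the point $y/|y|^2$ inside the ball, and this reduces the claim to the known formula $(-\Delta)^s(1-|x|^2)_+^{s}=\mathrm{const}$. This identity is the step I expect to be the main obstacle.

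\textbf{Step 3: pointwise evaluation of $(-\Delta)^s u_g$.} For $x\in B_1$, write $u_g = v + g\chi_{\rn\setminus B_1}$ with $v := u_g\chi_{B_1}$. Then
\[
(-\Delta)^s(g\chi_{\rn\setminus B_1})(x) = -C_{n,s}\int_{|y|>1}\frac{g(y)}{|x-y|^{n+2s}}\,dy,
\]
which is finite because $g\in\mathcal L^1_s(\rn)$. Exchanging the order of integration in $(-\Delta)^s v$ by Fubini and applying Step 2 gives
\[
(-\Delta)^s v(x) = C_{n,s}\int_{|y|>1}\frac{g(y)}{|x-y|^{n+2s}}\,dy .
\]
The two contributions cancel, so $(-\Delta)^s u_g = 0$ in $B_1$. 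Two technical points must be controlled here:
\begin{itemize}
\item Interchanging the principal value with the $y$-integral. I would truncate $g$ at large $|y|$ and near $\partial B_1$, then pass to the limit by dominated convergence, using the weights $(|y|^2-1)^{-s}$ and $|y|^{-n-2s}$.
\item Continuity of $u_g$ across $\partial B_1$. This uses the normalization $\int_{|y|>1}P_1(y,x)\,dy=1$ (the case $g\equiv1$) and the fact that $P_1(\cdot,x)$ concentrates at boundary points as $x\to\partial B_1$, as for an approximate identity.
\end{itemize}

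\textbf{Step 4: uniqueness.} Suppose $w$ is the difference of two pointwise continuous solutions. Then $w$ is continuous, $w=0$ outside $B_1$, and $(-\Delta)^s w=0$ in $B_1$. If $w$ attains a positive maximum at some $x_0\in B_1$, then
\[
(-\Delta)^s w(x_0) = C_{n,s}\,\mathrm{P.V.}\int_{\rn}\frac{w(x_0)-w(y)}{|x_0-y|^{n+2s}}\,dy > 0,
\]
since the integrand is nonnegative and strictly positive on $\rn\setminus B_1$. This contradicts $(-\Delta)^s w(x_0)=0$. Applying the same argument to $-w$ gives $w\equiv0$.
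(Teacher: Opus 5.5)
The paper gives no proof of this statement; it is quoted from \cite[Theorem 2.10]{greenball}, so your proposal has to stand on its own. Its architecture is sound: Steps 1, 3 and 4 are fine, and the maximum-principle uniqueness is exactly right. The gap is Step 2, which is not established, and the route you sketch for it does not work.

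The Green-function detour reduces nothing. Granted that $G$ is the Green function of $B_1$, the identity $P_1(y,x)=C_{n,s}\int_{B_1}G(x,z)|z-y|^{-n-2s}\,dz$ is \emph{equivalent} to your claim. Indeed, the difference of the two sides is continuous, vanishes outside $B_1$, and is $s$-harmonic in $B_1$ exactly when the claim holds. Moreover, in \cite{greenball} the Green function of Theorem \ref{thm_poisson_homogeneous} is itself built from this Poisson kernel, so invoking it here is circular.

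The Kelvin transform centered at the origin, as in Proposition \ref{prop_lfrac_kelvin}, does not produce $(1-|x|^2)_+^s$ either. It maps $B_1$ onto $\rn\setminus\overline{B_1}$ and turns $P_1(y,\cdot)\chi_{B_1}$ into $c_{n,s}|y^*|^{n+2s}(1-|y^*|^2)^{-s}(|x|^2-1)_+^s|x-y^*|^{-n}$, with $y^*=y/|y|^2$. That is an exterior kernel, for which the same question arises. In addition, Proposition \ref{prop_lfrac_kelvin} is only available for $\varphi\in\mathcal D(\rnstar)$ and $n>2s$. The profile you need is merely $C^{0,s}$, and the statement includes $n=1\le 2s$.

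The inversion that works is the one centered at $y$. Set $x^*:=y+\frac{x-y}{|x-y|^2}$. Expanding $|x|^2=|y+(x-y)|^2$ gives
\begin{equation*}
\frac{1-|x|^2}{|x-y|^2}=(|y|^2-1)\bigl(R^2-|x^*-c|^2\bigr),\qquad R:=\frac{1}{|y|^2-1},\quad c:=y-\frac{y}{|y|^2-1}.
\end{equation*}
Hence $P_1(y,x)\chi_{B_1}(x)=c_{n,s}\,|x-y|^{2s-n}\bigl(R^2-|x^*-c|^2\bigr)_+^s$, i.e.\ $c_{n,s}$ times the Kelvin transform about $y$ of $(R^2-|\cdot-c|^2)_+^s$. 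Two facts then give your claim:
\begin{itemize}
\item the pointwise identity $\lfrac(K_yh)(x)=|x-y|^{-n-2s}(\lfrac h)(x^*)$, valid in every dimension for $h\in\mathcal L^1_s(\rn)$ that is $C^{2s+\varepsilon}$ near $x^*$ (change variables directly in the singular integral and use that $|z-y|^{2s-n}$ is $s$-harmonic off $y$);
\item $\lfrac(R^2-|z-c|^2)_+^s=2^{2s}\Gamma(1+s)\Gamma(\frac n2+s)/\Gamma(\frac n2)$ in $B_R(c)$.
\end{itemize}
With $c_{n,s}=\Gamma(\frac n2)\sin(\pi s)/\pi^{\frac n2+1}$, the reflection formula $\Gamma(s)\Gamma(1-s)=\pi/\sin(\pi s)$ shows the constant is exactly $C_{n,s}$.

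Separately, your continuity argument relies on the normalization $\int_{|y|>1}P_1(y,x)\,dy=1$. This cannot be obtained as ``the case $g\equiv 1$'' of the theorem being proved. Nor does it follow from Steps 2--3 without already knowing continuity at $\partial B_1$. It is an explicit integral identity (the one that fixes $c_{n,s}$) and must be computed. At $x=0$ it is the Beta integral $c_{n,s}|\mathbb S^{n-1}|\,\frac12\int_0^\infty t^{-s}(1+t)^{-1}\,dt=1$; a general $x$ requires a further computation.
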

    
    In the above theorem, the constant $c_{n,s}$ is not to be mistaken with $C_{n,s}$ from \eqref{CN} (see \cite{greenball} for more details). We also have a formula for the homogeneous Poisson problem. It uses the following Green function (\cite[Theorem 3.1]{greenball}):
    \begin{equation}\label{eq-G}
        G(x, z) := \begin{cases}
           \displaystyle \kappa(n,s) |z-x|^{2s-n}\int_{0}^{r_0(x, z)} \frac{t^{s-1}}{(t+1)^{n/2}} \, dt & \quad \text{ if } n\neq 2s\\
           \displaystyle \frac{1}{\pi} \log \left ( \frac{r^2 - xz + \sqrt{(r^2 - x^2) (r^2-z^2)}}{r|z-x|}\right ) & \quad \text{ if } n= 2s,
        \end{cases}
    \end{equation}
    where
    $$
    \begin{aligned}
        r_0(x,z) := \frac{(r^2-|x|^2)(r^2-|z|^2)}{r^2|x-z|^2} \text{\; and \;} \kappa(n,s) := \frac{\Gamma(\frac{n}{2})}{2^{2s}\pi^{n/2}\Gamma^2(s)}.
    \end{aligned}
    $$
    
     \begin{theorem}\cite[Theorem 3.2]{greenball}\label{thm_poisson_homogeneous}
        Let $r > 0, h \in C^{2s+\varepsilon}(B_r) \cap C(\overline B_r)$ for some $\varepsilon>0$, and 
        \begin{equation}
            u(x) := \left \{ \begin{array}{ll}
                \displaystyle \int_{B_r(0)} h(y) G(x,y) \, dy & \text{if } x \in B_r,  \\
                 0& \text{if } x \in \rn \backslash B_r.
            \end{array} \right .
        \end{equation}
        Then, $u$ is the unique pointwise continuous solution of the Poisson problem
        \begin{equation}\label{eq_poisson_homogeneous}
            \begin{cases}
                 \lfrac u =  h \quad & \text{ in } B_r,  \\
                 u =  0 &\mbox{ in }\rn\setminus B_r.
         \end{cases}        
        \end{equation}
    \end{theorem}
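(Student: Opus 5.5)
The statement is Theorem~\ref{thm_poisson_homogeneous}, cited from \cite{greenball}: the Green-function representation in the ball. My plan has three parts: show that the representation solves the problem, show that it is continuous, and prove uniqueness by a maximum principle.

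\textbf{Reduction to a fundamental-solution identity.} Write $u = \int_{B_r} h(y)G(\cdot,y)\,dy$ in $B_r$ and extend it by zero outside. The key structural fact I would establish first is the decomposition
\[
G(x,z)=\Phi(x-z)-H(x,z),
\]
where $\Phi(x)=a_{n,s}|x|^{2s-n}$ is the fundamental solution of $\lfrac$ (for $n\neq 2s$; a logarithm when $n=2s$). For each fixed $z\in B_r$, $H(\cdot,z)$ should be the $s$-harmonic function in $B_r$ given by Theorem~\ref{thm_poisson_nonhomogeneous} with exterior datum $\Phi(\cdot-z)$:
\[
H(x,z)=\int_{\rn\setminus B_r}P_r(y,x)\,\Phi(y-z)\,dy .
\]
To verify this, I would substitute the explicit kernel \eqref{eq-P-alpha}, use the change of variables $t=r_0(x,z)$, and compare with \eqref{eq-G}. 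This amounts to Riesz's classical computation, namely that $\kappa(n,s)\,|z-x|^{2s-n}\int_{r_0}^\infty t^{s-1}(t+1)^{-n/2}\,dt$ equals the Poisson integral of $\Phi(\cdot-z)$. This identity is the main obstacle. I would attack it via the Kelvin transform of Section~\ref{Kel-T}, which maps the exterior of the ball to its interior and converts the Poisson integral into a Beta-type integral. The normalization $\kappa(n,s)$ then comes out of $\int_0^\infty t^{s-1}(1+t)^{-n/2}\,dt=B(s,\tfrac n2-s)$, which must match $a_{n,s}$.

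\textbf{Existence and continuity.} With the decomposition in hand, $u=\Phi\star(h\chi_{B_r})-\int_{B_r}h(z)H(\cdot,z)\,dz$ in $B_r$. For $h\in C^{2s+\varepsilon}$, the first term satisfies $\lfrac(\Phi\star h\chi_{B_r})=h$ pointwise in $B_r$, since $\Phi$ inverts $\lfrac$ (the Riesz potential $I_{2s}$ recalled in Section~\ref{Func-Set}). The Hölder regularity ensures the principal value \eqref{eq11} converges pointwise. The second term is $s$-harmonic in $B_r$ by Theorem~\ref{thm_poisson_nonhomogeneous}, applied to the exterior datum $\Phi\star(h\chi_{B_r})$ restricted to $\rn\setminus B_r$. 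Outside the ball the two terms cancel by construction, so $u=0$ there. Continuity across $\partial B_r$ follows from the bound $G(x,z)\lesssim |x-z|^{2s-n}\min(1,r_0)^s$, which gives $|u(x)|\lesssim \|h\|_\infty (r^2-|x|^2)^s\to 0$.

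\textbf{Uniqueness.} If $u_1$ and $u_2$ are both pointwise continuous solutions, their difference $w$ satisfies $\lfrac w=0$ in $B_r$ and $w=0$ outside, and $w$ is continuous. Suppose $w$ has a positive maximum at some $x_0\in B_r$. Then
\[
\lfrac w(x_0)=C_{n,s}\,\mathrm{P.V.}\int\frac{w(x_0)-w(y)}{|x_0-y|^{n+2s}}\,dy>0,
\]
because the integrand is nonnegative and strictly positive outside $B_r$. This contradicts $\lfrac w(x_0)=0$. The same argument applied to $-w$ rules out a negative minimum, so $w\equiv0$.
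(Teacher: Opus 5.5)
The paper gives no proof here; it quotes \cite[Theorem 3.2]{greenball}. Your outline follows the route of that source: the Green function written as $\Phi(x-z)$ minus the Poisson extension of $\Phi(\cdot-z)$, existence by cancellation using Theorem~\ref{thm_poisson_nonhomogeneous}, and uniqueness by the maximum principle. Your cancellation argument and your maximum-principle uniqueness proof are sound.

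\textbf{First gap: the core identity is not proved.} Everything rests on the identity that the explicit kernel \eqref{eq-G} equals $\Phi(x-z)-\int_{\rn\setminus B_r}P_r(y,x)\Phi(y-z)\,dy$. This is precisely \cite[Theorem 3.1]{greenball}, the computational core of the result, and you only announce it. Moreover, the tool you name does not produce it. Inverting in $\partial B_r$ about the origin, as in Section~\ref{Kel-T}, turns the Poisson integral (for $x,z\neq0$) into $C(x,z,r)\int_{B_r}(r^2-|w|^2)^{-s}|w-x^*|^{-n}|w-z^*|^{2s-n}\,dw$. Here $x^*=r^2x/|x|^2$ and $z^*=r^2z/|z|^2$ both lie outside the ball, so this is again a two-pole integral, not a Beta integral. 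The inversion centred at $z$ with radius $\sqrt{r^2-|z|^2}$, composed with the point reflection through $z$, does map $\partial B_r$ to itself and removes the pole at $z$. However, the remaining one-pole integral $\int_{B_r}(r^2-|w|^2)^{-s}|w-p|^{-n}\,dw$, with $p$ the image of $x$, still has to be evaluated explicitly.

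\textbf{Second gap: the case $n\le 2s$ is not covered.} For $n>2s$ your normalization $a_{n,s}=\kappa(n,s)B(s,\frac n2-s)$ and your kernel bound are correct. But the statement also covers $n=1$, $s\in[\frac12,1)$; the logarithmic branch of \eqref{eq-G} is exactly $n=2s$. Three steps fail in this range.
\begin{enumerate}
\item The tail $\int_{r_0}^\infty t^{s-1}(1+t)^{-1/2}\,dt$ diverges, since the integrand behaves like $t^{s-3/2}$. So the identity you propose to verify would make the finite Poisson integral of $\Phi(\cdot-z)$ infinite. For $s>\frac12$ the fundamental-solution constant $a_{1,s}=\Gamma(\frac12-s)/(2^{2s}\sqrt{\pi}\,\Gamma(s))<0$ is only the analytic continuation of $\kappa(1,s)B(s,\frac12-s)$. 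For $s=\frac12$ the fundamental solution is logarithmic. In both cases $\Phi-G$ must be written with a renormalized tail and checked separately.
\item The appeal to the Riesz potential is unavailable, since Section~\ref{Func-Set} defines $I_{2s}$ only when $n>2s$. So $\lfrac(\Phi\star h\chi_{B_r})=h$ in $B_r$ needs its own proof.
\item Your bound $G(x,z)\lesssim|x-z|^{2s-n}\min(1,r_0)^s$ is false. As $z\to x$, $G(x,z)$ tends to $\frac{\kappa(1,s)}{s-1/2}\,r^{1-2s}(r^2-|x|^2)^{2s-1}>0$ when $s>\frac12$, and to $+\infty$ when $s=\frac12$. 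Meanwhile the right-hand side stays bounded, and tends to $0$ when $s>\frac12$.
\end{enumerate}
Boundary continuity can still be obtained. For instance, $(1+t)^{-1/2}\le t^{-1/2}$ gives $G(x,z)\le\frac{\kappa(1,s)}{s-1/2}\,r^{1-2s}\big((r^2-|x|^2)(r^2-|z|^2)\big)^{s-1/2}$ for $s>\frac12$, but this has to be argued.
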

    
    The above results allow us to represent the value of $u$ outside $B_r$ as a solution to an integral equation.

    \begin{theorem}\label{thm_fredohlm}
        Let $r>0$ and $u \in C(\rn) \cap \mathcal{L}^1_s(\rn)$. Let $P_r$ and $G$ be defined as in \eqref{eq-P-alpha} and \eqref{eq-G}, respectively. Suppose that $u|_{B_r}$ and $[\lfrac u]|_{B_r}$ are known and that $[\lfrac u]|_{B_r} \in C^{2s+\varepsilon}(B_r) \cap C(\overline B_r)$ for some $\varepsilon>0$. Then, $g := u|_{\rn\backslash B_r}$ is the unique solution to the Fredholm equation of the first kind
        \begin{equation}\label{eq_fredholm_green}
            u(x) - u_h(x) = \int_{\rn \backslash B_r} P_r(x,y) g(y) \, dy, \qquad\forall x \in B_r,
        \end{equation}
        where $u_h$ is the unique solution of \eqref{eq_poisson_homogeneous} with $h := [\lfrac u]|_{B_r}$.
    \end{theorem}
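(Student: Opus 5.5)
The plan is to split $u$ into a part harmonic for $\lfrac$ in $B_r$ and a part carrying the known interior data, so that $u$ satisfies \eqref{eq_fredholm_green} by linearity. Uniqueness of $g$ will then come from the UCP (Lemma \ref{lemma_unique_continuation}).

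\emph{Existence (the identity holds).} Set $h:=[\lfrac u]|_{B_r}$, which satisfies the hypotheses of Theorem \ref{thm_poisson_homogeneous}. Let $u_h$ be the solution given there: it is continuous, vanishes outside $B_r$, and $\lfrac u_h=h$ pointwise in $B_r$. Define $w:=u-u_h$. Then $w\in C(\rn)\cap\mathcal L^1_s(\rn)$, since $u_h$ is bounded with compact support. Moreover $\lfrac w=0$ in $B_r$, and $w=u=g$ on $\rn\setminus B_r$. By the uniqueness part of Theorem \ref{thm_poisson_nonhomogeneous} applied with exterior datum $g$ (extended continuously; only its values on $\rn\setminus B_r$ enter), $w=u_g$. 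Evaluating at $x\in B_r$ gives exactly \eqref{eq_fredholm_green}; here the kernel $P_r$ is read with the paper's convention that the interior point is $x$ and the exterior integration point is $y$.

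One point needs care: Theorem \ref{thm_poisson_nonhomogeneous} gives uniqueness among pointwise continuous solutions. So I must check that $\lfrac u$ exists pointwise in $B_r$, which is implicit in the hypothesis that $[\lfrac u]|_{B_r}$ is known and continuous, and similarly for $u_h$. Then $\lfrac w=0$ holds pointwise, and $w$ is continuous on $\rn$.

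\emph{Uniqueness of the solution.} The equation is linear in $g$, so it suffices to show the following. If $\tilde g\in \mathcal L^1_s(\rn\setminus B_r)$, continuous (the admissible class, matching the hypotheses on $u$), satisfies
\[
\int_{\rn\setminus B_r}P_r(x,y)\tilde g(y)\,dy=0 \quad \text{for all } x\in B_r,
\]
then $\tilde g=0$. Let $v:=u_{\tilde g}$ be given by \eqref{eq_poisson_kernel}. By assumption $v=0$ in $B_r$, and by Theorem \ref{thm_poisson_nonhomogeneous} also $\lfrac v=0$ in $B_r$. Thus both $v$ and $\lfrac v$ vanish on the open set $B_r$, and Lemma \ref{lemma_unique_continuation} would give $v\equiv0$, hence $\tilde g=v|_{\rn\setminus B_r}=0$.

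The main obstacle is applying Lemma \ref{lemma_unique_continuation}, which requires $v\in H^{-t}(\rn)$ for some $t$. A function in $C\cap\mathcal L^1_s$ may grow at infinity, so it need not lie in any $H^{-t}$. I would try first to note that $v$ is a tempered distribution, since $\mathcal L^1_s$ functions have polynomially controlled averages. I would then invoke the version of the UCP valid for $\mathcal L^1_s$ functions whose fractional Laplacian vanishes on an open set where the function also vanishes; the pointwise/distributional formulation in \cite{GSU} is the natural reference. Failing that, I would localize: multiply $v$ by a cutoff $\chi\equiv1$ on a large ball. The tail $(1-\chi)v$ contributes to $\lfrac(\chi v)$ on $B_r$ a smooth term, given by an integral against $|x-y|^{-n-2s}$ away from the diagonal. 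Unlike $v$, this term does not vanish, so the cutoff argument does not close directly. I therefore expect to rely on the $\mathcal L^1_s$-UCP, or else to restrict the class of $g$, for example to $g$ making $u\in H^{s}(\rn)$, so that Lemma \ref{lemma_unique_continuation} applies verbatim.
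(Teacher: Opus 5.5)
Your existence part and the structure of your uniqueness part follow the paper's proof exactly. You get $u-u_h=u_g$ from the uniqueness of pointwise continuous solutions in Theorem~\ref{thm_poisson_nonhomogeneous}. For two solutions, the Poisson extension $u_{12}$ of $\tilde g:=g_1-g_2$ vanishes together with $\lfrac u_{12}$ in $B_r$.

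The step you leave open is a genuine gap, and the paper has the same gap: it applies Lemma~\ref{lemma_unique_continuation} to $u_{12}$ without comment. That lemma, like the UCP of \cite{GSU}, needs membership in some $H^{-t}(\rn)$. The class $C(\rn)\cap\mathcal L^1_s(\rn)$ is not contained in any such space; a nonzero constant is a counterexample. Your fallbacks do not settle this. The UCP in \cite{GSU} is stated for $H^{-t}$ functions, not for $\mathcal L^1_s$. Restricting $g$ so that $u\in H^s(\rn)$ proves uniqueness only in a smaller class than the one intended, namely $g_1,g_2\in C(\rn)\cap\mathcal L^1_s(\rn)$, as in the paper's proof.

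The missing idea is to use a sphere inversion, in the spirit of Section~\ref{Kel-T}, to replace $u_{12}$ by a compactly supported function.
\begin{itemize}
\item After scaling, take $r=1$ and write $\bar a:=a/|a|^2$. Since $u_{12}=0$ in $B_1$, the pointwise identity $\lfrac u_{12}=0$ in $B_1$ reads $\int_{|y|>1}\tilde g(y)|x-y|^{-n-2s}\,dy=0$ for every $x\in B_1$.
\item Set $w(z):=|z|^{2s-n}\tilde g(\bar z)$ for $0<|z|<1$ and $w:=0$ otherwise. The change of variables $z=\bar y$ gives $\|w\|_{L^1(\rn)}=\int_{|y|>1}|\tilde g(y)||y|^{-n-2s}\,dy<\infty$. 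So $w$ is an $L^1$ function supported in $\overline{B_1}$, its Fourier transform is bounded, and $w\in H^{-t}(\rn)$ for every $t>n/2$.
\item The chordal identity $|\xi-\bar y|=|\xi|\,|\bar\xi-y|/|y|$ gives, for $|\xi|>1$,
\[
\int_{B_1}\frac{w(z)}{|\xi-z|^{n+2s}}\,dz=|\xi|^{-n-2s}\int_{|y|>1}\frac{\tilde g(y)}{|\bar\xi-y|^{n+2s}}\,dy=0,
\]
because $\bar\xi\in B_1$.
\item For $\varphi\in\mathcal D(\rn\setminus\overline{B_1})$, the supports are at positive distance, so Fubini applies:
$\langle\lfrac w,\varphi\rangle=\int_{B_1}w\,\lfrac\varphi\,dz=-C_{n,s}\int\varphi(\xi)\int_{B_1}w(z)|\xi-z|^{-n-2s}\,dz\,d\xi=0$.
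Hence $\lfrac w=0$ in $\mathcal D'(\rn\setminus\overline{B_1})$, and $w=0$ there as well.
\item Lemma~\ref{lemma_unique_continuation} now applies legitimately, so $w\equiv0$. Therefore $\tilde g=0$ on $\{|y|>1\}$, and on $\{|y|=1\}$ by continuity.
\end{itemize}
This proves uniqueness in the full class $C(\rn)\cap\mathcal L^1_s(\rn)$ and repairs the same omission in the paper's argument.
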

    
    \begin{proof}
        First of all, we stress that a continuous function $u$ satisfying the assumptions of the present theorem can be decomposed as 
        $u(x) = u_h(x) + u_g(x)$,
        where $u_g$ is a solution of \eqref{eq_poisson_nonhomogeneous} with exterior value $g := u|_{\rn\backslash B_r}$. The function $u_h$ is known by Theorem \ref{thm_poisson_homogeneous}, and $u$ is known in $B_r$ by assumption.
        Henceforth, using Theorem \ref{thm_poisson_nonhomogeneous} we get
        $$u(x) - u_h(x) = u_g(x) = \int_{\rn \backslash B_r} P_r(x,y) g(y) \, dy, \quad \forall x \in B_r.$$
        For the uniqueness, we again use the UCP. 
        Suppose that there are two solutions $g_1, g_2 \in C(\rn) \cap \mathcal L^1_s(\rn)$ of \eqref{eq_fredholm_green}. Substracting both identities we get
        \begin{equation}
            \int_{\rn \backslash B_r} P_r(x,y) (g_1(y)-g_2(y)) \, dy = 0, \quad \forall x \in B_r.
        \end{equation}
        Now, let $u_{12}$ be defined as
        $$
        u_{12}(x):= \begin{cases}
            \displaystyle \int_{\rn\backslash B_r} P_r(y,x) (g_1-g_2)(y) \, dy & \text{ if } x \in B_r,   \\
             g_1(x) - g_2(x)& \text{ if }x\in \rn\backslash B_r. 
        \end{cases}
        $$
        By Theorem \ref{thm_poisson_nonhomogeneous}, we know that $u_{12}$ is the unique pointwise continuous solution to the system
        $$
        \begin{cases}
            \lfrac u_{12} = 0 \quad &\text{ in } B_r,\\
            u_{12} = g_1-g_2 \quad &\text{ in } \rn \backslash B_r.
        \end{cases}
        $$
        By design, we have that $[\lfrac u_{12}]|_{B_r} = u_{12}|_{B_r} = 0$, which by Lemma \ref{lemma_unique_continuation} implies that $u_{12} \equiv 0$. Thus, $g_1 - g_2 = 0$ and we get the desired uniqueness.
    \end{proof}
    
We also want to append a Tikhonov regularization result. For that purpose we first prove a few properties of the forward operator associated with the Poisson kernel $P_r$. 

\begin{theorem}
    Let $r>0$ and $g \in H^m(\rn)$ with $m > n/2 \geq m-1$.
    Let $\mathcal K_r$ be the linear integral operator associated with the Poisson kernel $P_r$ given in \eqref{eq-P-alpha}. Then,
    $$
    \mathcal K_r: H^m(\rn\backslash B_r) \rightarrow L^2(B_r),
    $$
    is an injective and compact operator with dense range.
\end{theorem}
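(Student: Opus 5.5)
The plan is to prove the three properties of $\mathcal K_r g(x)=\int_{\rn\setminus B_r}P_r(y,x)g(y)\,dy$ separately: injectivity from Theorem~\ref{thm_fredohlm} via the UCP, compactness from an explicit smoothing estimate on the kernel, and density of the range by duality, reducing it to injectivity of the adjoint. The hypothesis $m>n/2$ is used, through Sobolev embedding, to get $g\in C(\rn\setminus B_r)\cap L^\infty$, hence $g\in\mathcal L^1_s$. This places us in the pointwise framework of Theorem~\ref{thm_poisson_nonhomogeneous}.

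\textbf{Injectivity.} Suppose $\mathcal K_r g=0$ in $L^2(B_r)$. By continuity of $x\mapsto\mathcal K_r g(x)$ inside $B_r$, it vanishes pointwise there. The function $u_g$ of \eqref{eq_poisson_kernel} then satisfies $u_g|_{B_r}=0$ and $[\lfrac u_g]|_{B_r}=0$. Exactly as in the uniqueness part of Theorem~\ref{thm_fredohlm}, Lemma~\ref{lemma_unique_continuation} gives $u_g\equiv0$, hence $g=0$. Applying Lemma~\ref{lemma_unique_continuation} requires $u_g$ to lie in some $H^{-r}$; this holds since $g\in L^2$ outside and $u_g$ is bounded inside.

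\textbf{Compactness.} I would show that $\mathcal K_r$ is a Hilbert--Schmidt operator after a weighted factorization. Write
\[
P_r(y,x)=c_{n,s}(r^2-|x|^2)^s(|y|^2-r^2)^{-s}|x-y|^{-n}.
\]
The difficulty is the double singularity where $x$ and $y$ both approach $\partial B_r$: the factor $|x-y|^{-n}$ together with $(|y|^2-r^2)^{-s}$ is not square integrable on $B_r\times(\rn\setminus B_r)$. I expect this to be the main obstacle, and the $H^m$ regularity is meant to handle it. My first attempt would split $g$ with a cutoff, $g=\chi g+(1-\chi)g$, where $\chi$ is supported in $\{r\le|y|\le 2r\}$.
\begin{itemize}
\item The far part $(1-\chi)g$: the kernel is smooth and bounded by $C|y|^{-n-2s}$, which is in $L^2(B_r\times\{|y|>2r\})$. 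This piece is Hilbert--Schmidt, hence compact.
\item The near part $\chi g$: $\mathcal K_r(\chi g)$ is the $s$-harmonic extension of boundary-layer data. I would use the bound $\sup_{x\in B_r}\int P_r(y,x)\,dy\le1$ together with uniform H\"older continuity of $u_g$ up to $\partial B_r$, which follows from $g\in C^{0,\gamma}$ for $\gamma<m-n/2$ by the Ros-Oton--Serra boundary regularity already invoked in Section~\ref{Inv-Pro}. This gives a map $H^m\to C^{0,\gamma}(\overline B_r)$ with norm bound, and Arzel\`a--Ascoli then yields compactness into $C(\overline B_r)\hookrightarrow L^2(B_r)$.
\end{itemize}
Alternatively, approximate $\mathcal K_r$ in operator norm by the truncated operators with kernel $P_r\mathbf 1_{\{|y|>r+\delta\}}$, which are Hilbert--Schmidt. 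The error is controlled by $\sup_x\int_{r<|y|<r+\delta}P_r(y,x)\,dy\cdot\|g\|_\infty$, which I would need to show is small in $L^2(B_r)$ as $\delta\to0$. This requires the sup-norm bound from Sobolev embedding together with an $L^2$-in-$x$ smallness estimate, and is the delicate point.

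\textbf{Dense range.} The range is dense if and only if $\mathcal K_r^*$ is injective on $L^2(B_r)$. Take $w\in L^2(B_r)$ with $\int_{B_r}w(x)P_r(y,x)\,dx=0$ for a.e.\ $|y|>r$; it suffices to test against $g\in\mathcal D(\rn\setminus\overline B_r)$. By the identity $\int_{B_r}P_r(y,x)w(x)\,dx=-C\,[\lfrac v](y)$ for $y\notin\overline B_r$, where $v=\int_{B_r}G(\cdot,z)w(z)\,dz$ is the solution of \eqref{eq_poisson_homogeneous} with right-hand side $w$ (the Green/Poisson kernel relation from \cite{greenball}), the function $v\in\widetilde H^s_0(B_r)$ has $\lfrac v=0$ in the exterior open set and $v=0$ there. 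Lemma~\ref{lemma_unique_continuation} gives $v\equiv0$, hence $w=\lfrac v=0$ in $B_r$. Justifying this kernel identity for $L^2$ data, by approximating $w$ with smooth functions, is the second technical point.
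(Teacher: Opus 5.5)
Your proposal is correct. Injectivity is the paper's argument, namely the UCP step from the proof of Theorem~\ref{thm_fredohlm}. Compactness is also essentially the paper's: the paper applies your ``near part'' reasoning to all of $g$ at once. It uses the Sobolev embedding into $C^{0,\lambda}$ and the Ros-Oton--Serra estimate $\|\mathcal K_r g\|_{C^{0,\beta}(\overline{B_r})}\le C\|g\|_{H^m(\rn\setminus B_r)}$ with $\beta=\min(s,\lambda)$. It then concludes with a compact embedding, going through $H^{\beta'}(B_r)\hookrightarrow L^2(B_r)$ with $\beta'<\beta$, where you use Arzel\`a--Ascoli. That H\"older estimate does not depend on where $g$ is supported, so your cutoff, the Hilbert--Schmidt far part and the truncation alternative can all be dropped.

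The genuine difference is the dense range. The paper argues by explicit approximation:
\begin{itemize}
\item it smooths $f\perp\mathrm{Ran}\,\mathcal K_r$ to $f_\varepsilon$;
\item it invokes the Dipierro--Savin--Valdinoci theorem (all functions are locally $s$-harmonic up to a small error) to get a compactly supported $u_\varepsilon$, $s$-harmonic in $B_r$ and uniformly close to $f_\varepsilon$ there;
\item it uses $u_\varepsilon|_{B_r}=\mathcal K_r(u_\varepsilon|_{\rn\setminus B_r})$ and orthogonality to get $\|f\|_{L^2(B_r)}=0$;
\item it passes from continuous to $H^m$ data via $\|\mathcal K_r g\|_\infty\le\|g\|_\infty$ and mollification.
\end{itemize}
Your duality argument is instead the Runge-type argument behind \cite[Lemma 2.2]{GRSU}. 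It replaces the deep DSV approximation theorem with the UCP the paper already relies on, at the cost of being non-constructive.

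The technical point you flag, the Green--Poisson kernel identity for $L^2$ data, can in fact be bypassed. For $g\in\mathcal D(\rn\setminus\overline{B_r})$, set $\phi:=u_g-g\in\widetilde H^s_0(B_r)$, so that $\phi|_{B_r}=\mathcal K_r g$. Let $v\in\widetilde H^s_0(B_r)$ be the weak solution of $\lfrac v=w$ in $B_r$. Testing each weak formulation with the other function gives
\[
(\mathcal K_r g,w)_{L^2(B_r)}=\int_{\rn}(-\Delta)^{s/2}\phi\,(-\Delta)^{s/2}v\,dx=-\int_{\rn}(-\Delta)^{s/2}g\,(-\Delta)^{s/2}v\,dx=-\langle\lfrac v,g\rangle .
\]
Orthogonality then yields $\lfrac v=0$ in $\rn\setminus\overline{B_r}$ directly, in the distributional sense required by Lemma~\ref{lemma_unique_continuation}.
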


\begin{proof}
    First of all, $\mathcal K_r$ is injective since we have shown in the proof of Theorem \ref{thm_fredohlm} that $\mathcal K_rg_1 = \mathcal K_r g_2$ implies that $g_1 = g_2$ in $\rn \backslash B_r$. Now for the compactness, we recall the boundedness of the operator. If $u_g$, for $g\in C(\rn) \cap \mathcal L^1_s(\rn)$, is defined as in \eqref{eq_poisson_kernel}, then the inequalities
    $$
    \begin{aligned}
\left|u_g(x)\right| & \leqslant \int_{R>|y|>r} P_r(y, x)|g(y)| d y+\int_{|y|>R} P_r(y, x)|g(y)|\;d y \\
& \leqslant c_{n,s} \sup _{y \in \bar{B}_R \backslash B_r}|g(y)|+2^{n+s} c_{n,s}\left(r^2-|x|^2\right)^s \int_{|y|>R} \frac{|g(y)|}{|y|^{n+2 s}}\;d y \\
& \leqslant c_{n,s} \sup _{y \in \bar{B}_R \backslash B_r}|g(y)|+2^{n+s} c_{n,s} r^{2 s} \int_{|y|>R} \frac{|g(y)|}{|y|^{n+2 s}}\;dy,
\end{aligned}
    $$
hold for all $x \in B_r$ (see e.g. \cite[Theorem 2.10]{greenball}).
The last part is bounded, since $u_g := \mathcal K_r(g) \in C^\infty(B_r) \cap L^\infty (B_r)$. The regularity is due to the smoothness of $P_r$. By the assumption $m > n/2 \geq m-1$, we have that $H^m(\rn)$ is continuously embedded in the space $C_b(\rn)$ of bounded continuous functions (see \cite[Theorem 4.12 Part I Case A]{adams2003sobolev}). Thus, it follows that all the above results still hold for $g \in H^m(\rn)$.
From the same embedding theorem we know that $H^m(\rn \backslash B_r)$ is continuously embedded in $C^{0, \lambda}(\overline{\rn \backslash B_r}), 0<\lambda < \min(1, m-\frac{n}{2})$. Using  Ros-Oton \& Serra \cite[Proposition 1.7]{ros2014extremal} we have that $\mathcal K_r(g) \in C^{0,\beta}(\overline{B_r})$ with $\beta = \min(s, \lambda)$, and there is a constant $C>0$ such that
$$
\|\mathcal K_r(g)\|_{C^{0,\beta}(\overline{B_r})} \leq C\|g\|_{C^{0,\lambda}(\overline{\rn \backslash B_r})} \leq C \|g\|_{H^m(\rn \backslash B_r)}.
$$
We just proved that $\mathcal K_r$ is bounded in $C^{0,\beta}(\overline{B_r})$. But in order to get the desired compactness result we need to prove boundedness in $H^{\beta'}(B_r)$ with $0<\beta' < \beta$. Letting $u_g := \mathcal K_r(g)$, we have the estimates  
$$
\begin{aligned}
[u_g]_{H^{\beta'}(B_r)}^2 &= \int_{B_r}\int_{B_r} \frac{|u_g(x) - u_g(y)|^2}{|x-y|^{n+2\beta'}}\,dx\;dy \\
&\leq \int_{B_r}\int_{B_r} \frac{C_r\|g\|^2_{H^m(\rn)}|(x) - (y)|^{2\beta} }{|x-y|^{n+2\beta'}}\, dx\;dy \\
&\leq C_r \|g\|_{H^m(\rn)}^2 \int_{B_r}\int_{B_r} \frac{1}{|x-y|^{n+2\beta'-2\beta}}\;dx\;dy. 
\end{aligned}
$$
Regarding the last term, we know that $\beta' < \beta$, hence $n+ 2\beta' - 2 \beta < n$,  and the last integral is finite. 
This leads to reformulate the last inequality as
\begin{equation}\label{eq_Kr_boundedness1}
[u_g]_{H^{\beta'}(B_r)}^2 \leq C_{n, r, s} \|g\|^2_{H^m(\rn)}. 
\end{equation}
Moreover, we have that $\|u_g\|_{L^2(B_r)}^2 = \int_{B_r} u_g^2dx \leq \|u_g\|_\infty^2 |B_r| $. Now having in mind that $\mathcal K_r$ is bounded in $L^\infty(B_r)$, we get that is a constant $C>0$ such that
\begin{equation}\label{eq_Kr_boundedness2}
    \|u_g\|_{L^2(B_r)}^2 \leq C\|g\|_{H^m(\rn)}^2.
\end{equation}
Combining \eqref{eq_Kr_boundedness1}-\eqref{eq_Kr_boundedness2} we obtain that
$
\mathcal K_r: H^m(\rn \backslash B_r) \rightarrow H^{\beta'}(B_r)
$
is bounded. Using the compact embedding $H^{\beta'}(B_r)\hookrightarrow L^2(B_r)$ (see e.g \cite[Theorem 1.4.3.2]{Gris}), we finally get the desired compactness of $\mathcal K_r$.

At last, for the range of the operator, we show that if $f\in L^2(B_r)$, then 
\begin{equation}\label{eq_hanh_banach_density}
(\mathcal K_r(g), f)_{L^2(B_r)} = 0, \;\;\forall g \in H^m(\rn) \implies f = 0,
\end{equation}
which ensures the density of the image according to the Hahn-Banach Theorem. Suppose first that $f \in L^2(B_r)$ is such that $(\mathcal K_r(g), f)_{L^2(B_r)} = 0$ for all $g \in C_c(\rn)$. We recall that $C^\infty(\overline \Omega)$ is dense in $L^2(\Omega)$. Thus, for any $\varepsilon > 0$, it is possible to find $f_\varepsilon \in C^\infty(\overline \Omega)$ such that $\|f_\varepsilon - f \|_{L^2(B_r)} < \varepsilon$.
We recall that since all functions are locally $s-$harmonic up to a small error (see e.g. \cite[Theorem 1.1]{s_harmonic}), we can find $u_\varepsilon \in H^s(\rn) \cap C^s(\rn)$ and $R > r$ such that
$$
\begin{cases}
    \lfrac  u_{\varepsilon} = f_{\varepsilon} &\text{ in } B_r,\\
    u_{\varepsilon} = 0 &\text{ in } \rn \backslash B_R,
\end{cases}
$$
and $\|f_\varepsilon - u_\varepsilon\|_{C(\overline B_r)} < \varepsilon$.
It is straightforward to show that 
$\displaystyle
\int_{B_r} (u_\varepsilon - f_\varepsilon)^2 \, dx \leq \varepsilon^2|B_r|
$.
Using the triangle inequality on the $L^2$ norm we get
\begin{equation}\label{eq_density_inequality1}
\|u_\varepsilon - f\|_{L^2(B_r)} < \varepsilon + \varepsilon \sqrt{|B_r|}. 
\end{equation}

Let us notice that since $u_\varepsilon$ vanishes outside $B_R$ we have that $u_\varepsilon \in C_c(\rn)$. We set $g_\varepsilon := u_\varepsilon|_{(\rn \backslash B_r)}$. Then,  $u_\varepsilon$ is a solution of \eqref{eq_poisson_nonhomogeneous} with data $g_\epsilon$. In other words $u_\varepsilon|_{B_r} = \mathcal K_r(g_\varepsilon)$. 
By assumption, we have that $(u_\epsilon, f)_{L^2(B_r)} = 0$.
Therefore, using \eqref{eq_density_inequality1}, we get
\begin{equation}\label{eq5-13}
    \int_{B_r} u_\varepsilon^2\;dx + \int_{B_r} f^2\;dx < \varepsilon^2\left ( 1 + \sqrt{|B_r|}\right )^2. 
\end{equation}
Letting $\varepsilon \rightarrow 0$ in \eqref{eq5-13} we obtain that $\|f\|_{L^2(B_r)} = 0$ which proves the density of $\mathcal K_r(C_c(\rn))$ in $L^2(B_r)$. Moreover,  we know that $H^m(\rn)$ has dense intersection with $C_c(\rn)$. Indeed, $C_c^\infty(\rn) \subset H^m(\rn)$ and is canonically dense in $C_c(\rn)$. Since $\mathcal K_r$ is bounded, we get that $\mathrm{Im}(H^m(\rn))$ is also dense in $L^2(B_r)$. 
\end{proof}

We add a reconstruction theorem similar to Theorem \ref{thm_grsu_tikonov} as a straightforward application of the Tikhonov regularization schemes that can be found in Colton \& Kress \cite[Theorems 4.13 to 4.15]{colton1998inverse}.

\begin{theorem}\label{thm_reconstruction_fredholm}
    Let $r>0$ and $g \in H^m(\rn \backslash B_r)$ with $m > n/2 \geq m-1$.
    Let $\mathcal K_r$ be the linear integral operator associated with the Poisson kernel $P_r$ given in \eqref{eq-P-alpha}. Then, $g$ can be reconstructed from the knowledge of $\displaystyle u_g := \mathcal K_r g$ in $B_r$ as the limit of 
    $g = \displaystyle \lim_{\alpha \rightarrow 0} g_\alpha, \; \alpha > 0$ in $H^m(\rn \backslash B_r$,
    where 
    $$
    g_\alpha := \arg \min_{h\in H^m(\rn \backslash B_r)} \left [ \|\mathcal K_rh - u_g\|^2_{L^2(B_r)} + \alpha \|h\|^2_{H^m(\rn \backslash B_r) }\right ].
    $$
\end{theorem}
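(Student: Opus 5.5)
The plan is to put the statement into the abstract framework of Tikhonov regularization for compact injective operators, as in Colton \& Kress \cite[Theorems 4.13--4.15]{colton1998inverse}. Set $X:=H^m(\rn\backslash B_r)$ and $Y:=L^2(B_r)$, both Hilbert spaces. By the preceding theorem, $\mathcal K_r:X\to Y$ is linear, bounded, compact, injective, and has dense range. These are exactly the hypotheses under which the abstract results apply.

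The first step is existence and uniqueness of $g_\alpha$ for each $\alpha>0$. The functional
$$J_\alpha(h):=\|\mathcal K_rh-u_g\|^2_{L^2(B_r)}+\alpha\|h\|^2_{H^m(\rn\backslash B_r)}$$
is continuous, strictly convex and coercive on $X$, so it has a unique minimizer. Its Euler--Lagrange equation is the normal equation
$$(\alpha I+\mathcal K_r^*\mathcal K_r)g_\alpha=\mathcal K_r^*u_g .$$
The operator $\alpha I+\mathcal K_r^*\mathcal K_r$ is invertible on $X$ by Lax--Milgram, because $((\alpha I+\mathcal K_r^*\mathcal K_r)h,h)_X\ge\alpha\|h\|_X^2$. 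Hence $g_\alpha=(\alpha I+\mathcal K_r^*\mathcal K_r)^{-1}\mathcal K_r^*\mathcal K_r g$.

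The second step is convergence $g_\alpha\to g$ in $X$ as $\alpha\to0$. Since $\mathcal K_r$ is compact and injective, I would use its singular system $(\mu_j,\varphi_j,\psi_j)$. Injectivity makes $(\varphi_j)$ a complete orthonormal system of $X$, with $\mu_j>0$ and $\mu_j\to0$. Expanding, $g_\alpha-g=-\sum_j\frac{\alpha}{\alpha+\mu_j^2}(g,\varphi_j)_X\varphi_j$, so
$$\|g_\alpha-g\|_X^2=\sum_j\Big(\frac{\alpha}{\alpha+\mu_j^2}\Big)^2|(g,\varphi_j)_X|^2 .$$
Each term tends to $0$ as $\alpha\to0$ and is bounded by $|(g,\varphi_j)_X|^2$, which is summable. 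Dominated convergence for series then gives $g_\alpha\to g$.

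This step is where injectivity is essential. Without it the limit would only be the projection of $g$ onto $(\ker\mathcal K_r)^\perp$, and injectivity is precisely what was derived from the UCP in Theorem \ref{thm_fredohlm}. Dense range is not needed for exact data, but it guarantees that a Tikhonov solution exists for arbitrary, possibly noisy, data in $L^2(B_r)$, and it supports the stable choice of $\alpha$ as a function of the noise level in \cite[Theorem 4.15]{colton1998inverse}.

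I expect the main obstacle to be a functional-analytic mismatch rather than the regularization argument itself. The previous theorem establishes compactness and injectivity for data in $H^m(\rn)$ restricted to $\rn\backslash B_r$, and it uses the embedding into $C_b$ and the regularity result of Ros-Oton \& Serra. I must check that $\mathcal K_r$ is well defined and compact on $H^m(\rn\backslash B_r)$ itself. I would do this through a bounded extension operator $H^m(\rn\backslash B_r)\to H^m(\rn)$, which exists because $\rn\backslash B_r$ has a smooth boundary. Since $\mathcal K_r g$ only involves values of $g$ on $\rn\backslash B_r$, the operator is independent of the choice of extension, and all the mapping properties transfer. Once this identification is in place, the statement follows directly from the cited abstract theorems.
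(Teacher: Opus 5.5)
Your proposal is correct and follows the same route as the paper. The paper gives no separate proof: it invokes the preceding theorem (compactness, injectivity and dense range of $\mathcal K_r$) together with the abstract Tikhonov results of Colton \& Kress (Theorems 4.13--4.15), and your normal-equation and singular-system argument is exactly the content of those results for exact data. Your extra step through a bounded extension $H^m(\rn\backslash B_r)\to H^m(\rn)$ is a worthwhile precision, since the preceding proof states its estimates in terms of $\|g\|_{H^m(\rn)}$ while the operator is posed on $H^m(\rn\backslash B_r)$.
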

\begin{remark}
    We chose to place ourselves in the framework of Sobolev spaces for this regularization result since the Hilbert spaces have a very well-established setting for Regularization Theory, especially with a Tikhonov scheme. However, this choice comes with a cost. Much regularity is needed for the above theorem to hold true. We would like to point out that some alternatives do exist if one wanted to work within the setting of Banach spaces. See \cite{schuster2012regularization, lorenz2008optimal, weidling2020optimal,  burger2004convergence, hein2009tikhonov} and the references therein for some quite interesting literature on the topic. It is also important to mention that some more in-depth analysis of the forward operator would be needed in order to fit in this setting.
\end{remark}
    
 \subsection{Applications} 
    We present here two simple applications of the reconstruction algorithms featured in the previous subsection. We did not attempt to be thorough in the applications that might arise from these new reconstruction results.
    
    \subsubsection{Inverse Robin problem with a Dirichlet-to-Robin map}\,
    
        In this first application we consider an alternative way to study the Robin problem \eqref{q_robin_frac}. The difference here is that we use the Dirichlet-to-Neumann map $\Lambda_q$ from \cite{GRSU} to represent the nonlocal normal derivative in the Robin condition.
        We first assume that the potential $q$ is such that,
    $$
    \text{if } \quad \lfrac u+qu = 0 \text{ in } \Omega, \text{ and } u_{|\ext} = 0, \quad \text{ then } u\equiv 0\;\text{ a.e. in } \rn.
    $$
    This is always the case, for example if $q$ satisfies Assumption \ref{Ass-q}.
Then,  we know from \cite{GRSU} that the Dirichlet problem
    \begin{equation}\label{eq_dirichlet_frac}
    \begin{cases}
        \lfrac u+ qu  = 0 \quad &\text{ in } \Omega \\	
        u = \phi  &\text{ in } \ext,
    \end{cases}
\end{equation}
			for $\phi\in H^s(\ext)$, has a unique weak solution $u \in H^s(\rn)$ satisfying
			\begin{equation}\label{eq_weak_dirichlet}
				\int_{\rn}(-\Delta)^{s/2} u(-\Delta)^{s/2}v\;dx + \int_{\Omega}quv\;dx = 0, \quad \forall \;v \in \widetilde H_0^s(\Omega).	\end{equation}

            In this way,  we can define the appropriate Dirichlet-to-Neumann map 
            \begin{equation*}
				\Lambda_q : H^s(\ext) \rightarrow H^{-s}(\ext),\qquad \phi \mapsto  [\lfrac u]|_{\ext}.
			\end{equation*}
            The discussion about the justification of such a non-local DN-map can be found in \cite{GSU,GRSU}. More precisely, the case where $\Omega$ has a Lipchitz continuous boundary is contained in \cite[Lemma 2.4]{GSU}, and the case of an open set with a $C^\infty$-boundary in \cite[Lemma A.1]{GSU}.
			
			We seek to know whether for $\theta,\theta'\in \mathbb R_+^*$ and $\Omega_2$ a subset of $\ext$, the knowledge of the pair
			$$
				\Big(\Lambda_q \phi(x) + \theta \phi(x),\Lambda_q \phi(x) + \theta' \phi(x)\Big), \quad  x \in \Omega_2 \subseteq \ext,
			$$
             can be sufficient to identify the potential $q$. This will be our Dirichlet-to-Robin map.
            
            Putting the problem this way, we may emphasize that we actually do not need to consider $\Omega_2 = \ext$ in order to obtain a direct identification. In fact, to the contrary of the results in \cite{GRSU} we do not need $f:=u|_{\ext}$ to have a compact support in $\ext$.  The Dirichlet-to-Robin map can be measured in a (nonempty) compact part of the exterior, without any further assumption on $u$.
            
            \begin{lemma}\cite[Proposition 5.1]{GRSU}\label{lemma_measure_ucp}
                Let $\Omega \subset \rn$ be a bounded open set and let $q$ satisfy Assumption \ref{Ass-q}. Let $\frac 14\le s<1$ and assume that $u\in H^s(\rn)$ satisfies
                $$
                \lfrac u+qu = 0 \text{ in } \Omega,
                $$
                in the sense of \eqref{eq_weak_dirichlet}.
                If $u|_{E} = 0$ for some measurable set $E \subset \Omega$ with Lebsgue measure $|E|>0$,  then $u\equiv 0$ in $\rn$.
            \end{lemma}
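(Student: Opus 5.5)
The plan is to reduce the measurable-set statement to the open-set UCP of Lemma \ref{lemma_unique_continuation} through the Caffarelli--Silvestre extension \cite{Caf3}. Let $w$ be the extension of $u$ to $\mathbb R^{n+1}_+=\rn\times(0,\infty)$. It solves
$$\nabla\cdot(y^{1-2s}\nabla w)=0 \text{ in } \mathbb R^{n+1}_+,\qquad w(\cdot,0)=u,\qquad -\lim_{y\to0^+}y^{1-2s}\partial_y w=c_s\lfrac u .$$
By the equation \eqref{eq_weak_dirichlet}, in $\Omega\times\{0\}$ the weighted Neumann datum is $-c_s\,q\,u$. The Dirichlet datum vanishes on $E$ by assumption, so the Neumann datum $-c_s q u$ vanishes on $E$ as well. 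The problem thus becomes a boundary unique continuation question for a degenerate elliptic equation (an $A_2$ weight) with vanishing Cauchy data on a set of positive measure.

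The key steps, in order, are as follows.

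First, I would establish enough regularity of $u$ near a point of $E$. Since $q\in L^\infty$ and $u\in H^s$, a local bootstrap for $\lfrac u=-qu$ gives $u\in H^{2s}_{\rm loc}(\Omega)$, or at least $u\in H^{\min(2s,1)}_{\rm loc}$ with improved integrability. This is where the restriction $s\ge 1/4$ enters: it makes $2s\ge 1/2$, which is the regularity needed for the weighted Neumann trace $y^{1-2s}\partial_y w$ to be an honest function comparable to $qu$ and for $w$ to have the tangential regularity required by the Carleman estimates below.

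Second, I would pick a Lebesgue density point $x_0$ of $E$ inside $\Omega$ and prove that $w$ vanishes to infinite order at $(x_0,0)$. The tool is a boundary doubling inequality for $w$, obtained from a Carleman estimate with the weight $y^{1-2s}$ and the Robin-type boundary condition $y^{1-2s}\partial_y w=c_s q w$, in the spirit of Rüland's work on fractional Schrödinger equations with rough potentials. Doubling together with a vanishing set of density close to $1$ in small balls forces infinite-order vanishing, via a Remez/Nadirashvili-type argument comparing $L^2$ norms on $E\cap B_\rho(x_0)$ and on $B_\rho(x_0)$.

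Third, the strong UCP for the extension equation then gives $w\equiv 0$ near $(x_0,0)$. Consequently $u=0$ on an open ball $B\subset\Omega$, and from the equation $\lfrac u=-qu=0$ on $B$. Lemma \ref{lemma_unique_continuation} then yields $u\equiv 0$ in $\rn$.

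The main obstacle is the second step: the Carleman/doubling estimate up to the boundary with a merely bounded potential in the boundary condition, and the passage from a positive-measure set to infinite-order vanishing. My first attempt would be to follow Rüland's boundary Carleman estimate and use the density point argument, checking that $s\ge1/4$ provides exactly the trace regularity those estimates need.
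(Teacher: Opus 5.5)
The paper gives no proof of this lemma; it imports it from \cite[Proposition 5.1]{GRSU}. Your outline (Caffarelli--Silvestre extension, boundary Carleman/doubling estimates for the Robin-type relation $\lim_{y\to0^+}y^{1-2s}\partial_y w=c_s q u$ on $\Omega\times\{0\}$, a Lebesgue density point of $E$, strong unique continuation, then Lemma \ref{lemma_unique_continuation}) is essentially the standard route to that result.

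One claim in your Step 1 is wrong, however: the hypothesis $s\ge\frac14$ has nothing to do with trace regularity. In your normalization the weighted Neumann datum is exactly $-c_s q u\in L^2(\Omega)$ for every $s\in(0,1)$. The interior bootstrap $u\in H^{2s}_{\rm loc}(\Omega)$ also holds for every $s$. So Step 1 uses no restriction on $s$ at all. The restriction enters only inside the Carleman estimate of Step 2. There, the boundary term generated by the merely bounded potential $q$ must be absorbed into the boundary contribution on the left-hand side, and the powers of the Carleman parameter $\tau$ allow this only when $s\ge\frac14$. This is precisely R\"uland's limitation for $L^\infty$ potentials, so "checking that $s\ge\frac14$ provides the trace regularity" is not what you will actually be checking.

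Relatedly, your reformulation as ``vanishing Cauchy data on $E$'' is not sufficient on its own. For $s=\frac12$ it would be boundary unique continuation for harmonic functions from a positive-measure set, which fails in general (Bourgain--Wolff). What makes Step 2 work is the Robin relation on all of $\Omega\times\{0\}$, which you do invoke.

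Finally, a doubling inequality with a fixed constant already excludes infinite-order vanishing of a nontrivial $w$. So Step 2 should be phrased as a contradiction: if $u\not\equiv0$, doubling holds at $(x_0,0)$. A blow-up at the density point then produces a nontrivial limit whose Dirichlet and weighted Neumann traces both vanish on a ball of $\rn$, which is impossible. With this phrasing the separate appeal to strong unique continuation is not needed.
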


            We have the following result that solves the identification and reconstruction problems.
            
			\begin{theorem}\label{lemma_omega_uniqueness}
            Let $\Omega\subset\rn$ be a bounded domain with a Lipschitz continuous boundary.
            Let $\Omega_2 \subseteq \ext$ be a nonempty open set which can be bounded.
        Let $f_1,f_2 \in H^s(\Omega_2)$ and $\theta, \theta' \in \mathbb R^*_+$.       
        Let $q$ be a function over $\Omega$ satisfying $q\ge q_0>0$ a.e. in $\Omega$ for some constant $q_0$, and the following conditions:
        \begin{itemize}
            \item  $s\in [\frac{1}{4}, 1)$ and $q\in L^\infty(\Omega)$, or;
            \item $s\in (0,1)$ and $q \in C(\overline \Omega)$.
        \end{itemize}
        Suppose that there exists $u\in H^s(\rn)$ a weak solution of
        $$
        \lfrac u + qu = 0 \quad \text{ in  }\Omega,
        $$ 
        in the sense of \eqref{eq_weak_dirichlet}, satisfying the systems
\begin{equation}\label{eq_double_robin}
            \begin{cases}
                \lfrac u+ q u = 0 \quad &\text{ in } \Omega, \\
                \Lambda_{q} u + \theta u = f_1 \quad &\text{ in } \Omega_2,
            \end{cases}
            \quad \quad
            \begin{cases}
                \lfrac u+ q u = 0 \quad &\text{ in } \Omega, \\
                \Lambda_{q} u + \theta' u = f_2 \quad &\text{ in } \Omega_2.
            \end{cases}
        \end{equation}
    Then, $u$ is unique. Moreover, q can be reconstructed a.e in  $\Omega$ from the knowledge of 
    $
    (\Lambda_{q}u + \theta u, \Lambda_{q}u + \theta' u).
    $
    \end{theorem}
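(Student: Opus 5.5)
Subtracting the two systems in \eqref{eq_double_robin} on $\Omega_2$ eliminates the unknown nonlocal term:
$$
(\theta-\theta')\,u = f_1-f_2 \quad\text{in } \Omega_2 .
$$
The Robin data contain no constraint forcing $\theta\neq\theta'$, so we assume it, as the Dirichlet-to-Robin setup implicitly requires. Then
$$
u|_{\Omega_2}=\frac{f_1-f_2}{\theta-\theta'} \in H^s(\Omega_2),
$$
and substituting back gives $[\lfrac u]|_{\Omega_2}=\Lambda_q u|_{\Omega_2}=f_1-\theta u|_{\Omega_2}$. Hence the data determine the pair $(u|_{\Omega_2},[\lfrac u]|_{\Omega_2})$ on the nonempty open set $\Omega_2$.

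For uniqueness, suppose $u_1,u_2\in H^s(\rn)$ both satisfy \eqref{eq_double_robin} with the same $(f_1,f_2)$. The previous step shows that $w:=u_1-u_2$ and $\lfrac w$ both vanish on $\Omega_2$. Lemma \ref{lemma_unique_continuation} (with $r=-s$) then gives $w\equiv0$. Only the open set $\Omega_2$ is used here, so no compact support assumption on $u|_{\ext}$ is needed.

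For reconstruction, we rebuild $u$ on all of $\rn$ from $(u|_{\Omega_2},[\lfrac u]|_{\Omega_2})$ via Theorem \ref{thm_reconstructio_kelvin}, applied with $W$ a ball inside $\Omega_2$. This needs $n>2s$; in the case $n=1\le 2s$ one would instead use the Green-function scheme of Theorem \ref{thm_fredohlm} and Theorem \ref{thm_reconstruction_fredholm}, provided enough regularity is available. With $u$ known everywhere, $\lfrac u$ is known in $\Omega$ as a distribution. In particular $\lfrac u+qu=0$ determines $qu=-\lfrac u$ in $\Omega$. Note that $u\not\equiv0$ unless all the data are zero.

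The main obstacle is dividing by $u$, because $u$ could vanish on a large set in $\Omega$. The two hypotheses on $q$ handle this separately.

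\textbf{Case $s\in[\frac14,1)$, $q\in L^\infty(\Omega)$.} By Lemma \ref{lemma_measure_ucp}, $u\not\equiv0$ forces the zero set $\{x\in\Omega:u(x)=0\}$ to have measure zero. Hence $q=-\lfrac u/u$ a.e. in $\Omega$; this uses that $\lfrac u=-qu\in L^2(\Omega)$, so the quotient is meaningful a.e.

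\textbf{Case $q\in C(\overline\Omega)$, general $s$.} Interior regularity (Ros-Oton \& Serra together with \cite{HB-FC}, as in the Robin-to-Robin theorem) makes $u$ and $\lfrac u$ continuous in $\Omega$, with the equation holding pointwise. So $q=-\lfrac u/u$ on the open set $\{u\neq0\}$. If that set were not of full measure, its complement would contain a point of density where $u=0$; there $\lfrac u=-qu$ also vanishes. The step to close this gap is to obtain an open set on which both $u$ and $\lfrac u$ vanish and invoke Lemma \ref{lemma_unique_continuation}. Failing that, one can use continuity of $q$ to extend from the dense open set $\{u\neq0\}$. I expect this last point, showing $\{u\neq0\}$ is dense in $\Omega$ for small $s$, to be the delicate part.
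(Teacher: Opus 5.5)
Your route is the paper's: subtract the two Robin conditions on $\Omega_2$ to obtain $u|_{\Omega_2}$ and $[\lfrac u]|_{\Omega_2}$, get uniqueness from Lemma \ref{lemma_unique_continuation}, rebuild $u$ on $\rn$ via Theorem \ref{thm_reconstructio_kelvin} or Theorem \ref{thm_reconstruction_fredholm}, and read off $q=-\lfrac u/u$. You are more explicit than the paper about the hypotheses it uses implicitly: $\theta\neq\theta'$, $n>2s$ for the Kelvin scheme, and nonzero data. The paper's last step just cites Lemma \ref{lemma_measure_ucp} together with the strategy of \cite[Theorem 1]{GRSU}, without separating the two cases on $q$.

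The one incomplete point is the case $q\in C(\overline\Omega)$, which you leave open and call delicate. It closes in one line.
\begin{itemize}
\item Suppose $\{u\neq 0\}$ were not dense in $\Omega$. Then $U:=\Omega\setminus\overline{\{u\neq 0\}}$ is a nonempty open set on which $u=0$.
\item Testing the equation with $\mathcal D(U)\subset\widetilde H_0^s(\Omega)$ gives $\lfrac u=-qu=0$ on $U$.
\item Lemma \ref{lemma_unique_continuation} then forces $u\equiv 0$, which contradicts nonzero data.
\end{itemize}
You do not even need continuity of $u$ here: the same argument shows that every ball in $\Omega$ meets $\{u\neq0\}$ in a set of positive measure. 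So $q=-\lfrac u/u$ is determined a.e. on a set meeting every ball in positive measure, and continuity of $q$ then fixes $q$ everywhere in $\Omega$.

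Drop your attempted route through full measure. A density point of the zero set does not yield an open set, since the zero set of a continuous function can be closed, nowhere dense, and of positive measure. Moreover, full measure of $\{u\neq0\}$ is exactly the measurable UCP that Lemma \ref{lemma_measure_ucp} supplies only for $s\ge\frac14$. That is why continuity of $q$ is assumed for small $s$.
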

            
			\begin{proof}
				Let us assume that there are two functions $u_1, u_2 \in H^s(\rn)$ which are weak solutions of both systems in \eqref{eq_double_robin} with possibly distinct potentials $q_1$ and $q_2$, respectively. The systems in \eqref{eq_double_robin} yield the system
				$$
				\begin{cases}
					\lfrac u_1 + \theta u_1 = \lfrac u_2 + \theta u_2 \quad &\text{ in } \Omega_2,\\
					\lfrac u_1 + \theta'u_1 = \lfrac u_2 + \theta' u_2\quad &\text{ in } \Omega_2.
				\end{cases}
				$$
				Subtracting both lines of the above system we obtain that
				$$
				\begin{cases}
					u_1 - u_2 = 0 \quad &\text{ in  } \Omega_2,\\
					\lfrac u_1 - \lfrac u_2 = 0 \quad &\text{ in } \Omega_2.
				\end{cases}				 
				$$
				Using Lemma \ref{lemma_unique_continuation}, we obtain that $u_1-u_2 \equiv 0$ in $\rn$. Thus, $u_1 = u_2$ a.e. in $\rn$. Moreover, using Lemma \ref{lemma_measure_ucp} and the same strategy as given in \cite[Theorem 1]{GRSU} we get 
                \begin{equation}\label{eq_potential_reconstruction}
                q_1 = q_2 = - \frac{\lfrac u_1}{u_1}\quad\text{a .e. in } \Omega.
                \end{equation}
                Now, we point out that the Dirichlet-to-Robin map gives us a direct knowledge of 
                $
                (u|_{\Omega_2}, [\lfrac u]|_{\Omega_2}).
                $
                Applying Theorem \ref{thm_reconstructio_kelvin} or, alternatively Theorem \ref{thm_reconstruction_fredholm}, if some strong regularity assumptions can be made about $u_1$ and $(f_1, f_2)$, then $u_1$ can be reconstructed in all $\rn$ and thus $q$ can be computed from  \eqref{eq_potential_reconstruction}
.                \end{proof}	

\subsubsection{Space-fractional heat equation}\,

We give here another application of the reconstruction schemes featured in this section. The space-fractional heat equation is a well studied model. Let $\Omega\subset \rn$ be a nonempty open set and $B$ an arbitrarily small nonempty region of $\Omega$. The following system is called the homogeneous fractional heat equation:
\begin{equation}\label{eq_fractional_heat_equation}
    \left \{\begin{array}{ccrccl}
         \partial_t u(x,t) &+& \lfrac u(x,t) &=& 0&   \quad \text{in } \Omega\times(0,\infty),\\
         & &u(x,t) & = & 0& \quad \text{in } \ext\times (0,\infty).
    \end{array} \right .
\end{equation}
Suppose that, through some kind of measurement device, one has knowledge of the solution $u$ of \eqref{eq_fractional_heat_equation} over $B \times [T_1, T_2]$, where $0\leq T_1 < T_2$. Then, one can reconstruct the solution $u$ in the domain $\Omega \times (T_1, T_2)$. The strategy is quite simple. Since $u|_{B}$ is known over $[T_1, T_2]$ then $\partial_tu$ can be computed, and using \eqref{eq_fractional_heat_equation}, one can recover $[\lfrac u]|_{B}$ over $(T_1, T_2)$. 
\begin{figure}[h]
    \centering
    \includegraphics[width=0.5\linewidth]{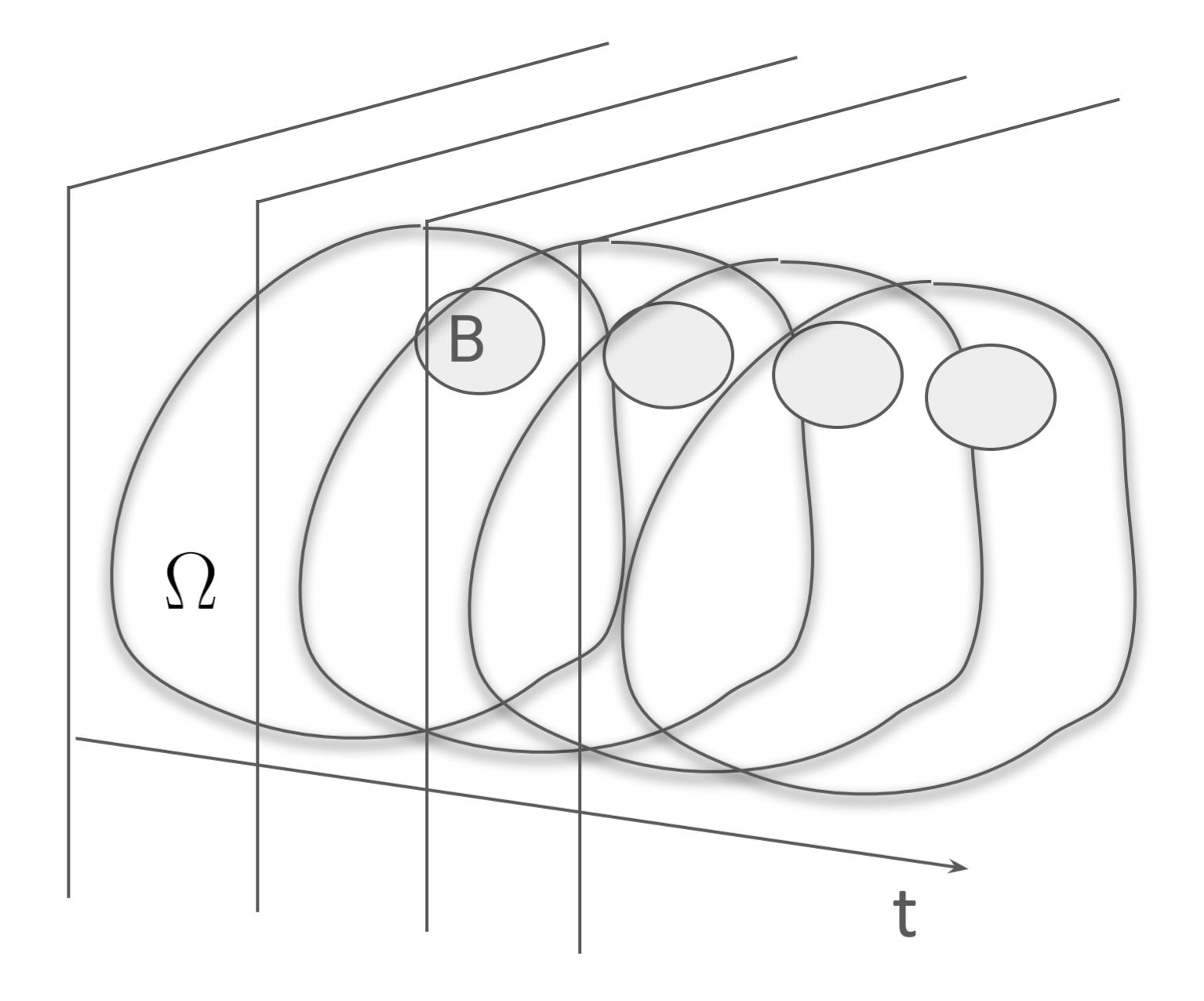}
    \caption{Fractional Heat Equation}
    \label{fig:placeholder}
\end{figure}
Thus, $(u|_B, [\lfrac u]|_{B})$ is known and $u$ can be computed in the whole space. 



\section{Numerical experiments}\label{Sec-Numerical}
In this section we present some numerical simulations results in 1D with $\Omega=(-1,1)$ for Theorem \ref{thm_reconstructio_kelvin}. We use the finite difference method developed by Duo, Wyk \& Zhang \cite{trapezoidalfl} to implement the discretizations of the fractional Laplacian. In order to deal with the Tikhonov optimization problem in Theorem \ref{thm_grsu_tikonov}, we choose a Krylov subspace method. The norm $\|\cdot\|_{H^s(\mathbb R)}$ in the penalization term \eqref{PT} is approximated through the finite difference matrix of the fractional Laplacian, using the semi-norm identity
$\displaystyle
[u]^2_{H^s(\mathbb R)} = \frac{2}{C_{1,s}}\|\lfrac u \|_{L^2(\mathbb R)}^2
$
(see e.g. \cite[Proposition 3.6]{NPV} for further details). Using the same notation as in Theorem \ref{thm_reconstructio_kelvin}, we let $h:= u-u_{ext}$ where $u_{ext}$ is an $H^s$-extension of $u$ outside the interval $(-1, 1)$.

We first need to implement this extension and for that purpose we use the constructive result of \cite[Theorem 5.4]{NPV}. This result implies the creation of two functions $\psi_1$ and $\psi_2$ that have support in a neighborhood of the boundary. For the functions $\psi_1$ and $\psi_2$  we choose mirrored smooth plateau functions of the form
$$\displaystyle
g(x) := \frac{f(x)}{f(x) - f(1-x)},
$$
where $f(x):= e^{-\frac{1}{x}}$. The mirroring ensures they have compact support, and they are translated so that $\psi_1$ is centered in $-1$ and $\psi_2$ is centered in $1$.
\begin{figure}[h]
    \centering
    \includegraphics[width=1\linewidth]{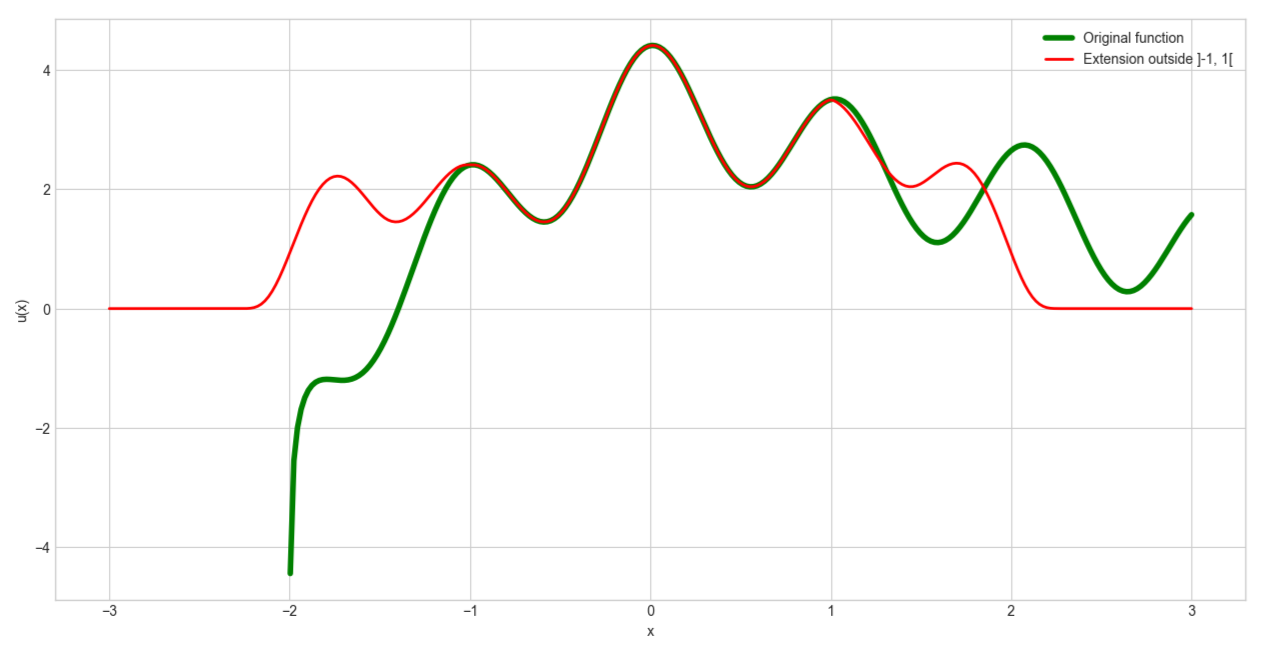}
    \caption{Smooth extension outside the ball}
    \label{fig:placeholder2}
\end{figure}

It is worth mentioning that we did not pursue any analysis of stability of the reconstruction beforehand. However, logarithmic stability results can be found in \cite{ruland2021single} for the Tikhonov reconstruction.

At a numerical level, differences appear between the fractional Laplacian of the Kelvin transform computed numerically and the values computed through $\frac{1}{|x|^{2\alpha}} K_\alpha[\lfrac h]$ given in \eqref{eq_kelvin_lfrac}. In essence, the shape of the fractional Laplacian does not vary between those two methods of computation. But, there are still noticeable discrepancies, especially near the spikes. The differences increase drastically as $s \rightarrow 1$.
An interpretation of this phenomenon may reside in truncation errors and regularity concerns. While the fractional Laplacian of the initial function, $\lfrac h$ is computed within some interval $(-B,B)$ with unavoidable truncation errors, the Kelvin transform is not known within the interval $(-\frac{1}{B}, \frac{1}{B})$, leading to unavoidable interpolation errors. Moreover, the finite-difference methods used to compute the fractional Laplacian (see e.g. \cite{trapezoidalfl,huang2014numerical}) are sensitive to the regularity of the function. It is indeed important to mention that the Kelvin transform creates spikes and singularities near the origin due to the fact that it contracts the space.

The following tables highlight these discrepancies. Note that we take $h := u-u_{ext}$ which vanishes inside the unit ball and thus makes the computation of $\lfrac K_\alpha[h]$ insensitive to truncation errors.

\begin{minipage}{0.43\textwidth}
\textbf{Table 1.}\\
\small{$u(x):=e^{-x^2}$}\\
\vfill
\begin{tabular}{c|cc}
\hline
$\alpha := 2s$ & Relative $L^2$ error & Absolute error \\
\hline
0.2 & 0.0226 & 0.0512 \\
0.6 & 0.0536 & 0.2177 \\
1 & 0.1188 & 0.9754 \\
1.5 & 0.2838 & 6.5153 \\
1.9 & 0.4850 & 29.5304 \\
\hline
\end{tabular}
 
\end{minipage}
\hfill 
\begin{minipage}{0.43\textwidth}
\textbf{Table 2.}\\
\small{$u(x):=\sin(x)/x$}\\
\vfill
\begin{tabular}{c|cc}
\hline
$\alpha := 2s$ & Relative error ($L^2$) & Absolute error $(L^\infty)$ \\
\hline
0.2 & 0.0442 & 0.4361 \\
0.6 & 0.1066 & 2.3095 \\
1 & 0.2057 & 12.1302 \\
1.5 & 0.3969 & 80.4332 \\
1.9 & 0.6056 & 342.5413 \\
\hline
\end{tabular}
\end{minipage}

\begin{figure}
\centering
\begin{minipage}{0.48\textwidth}
          \centering
          \includegraphics[width=1\linewidth]{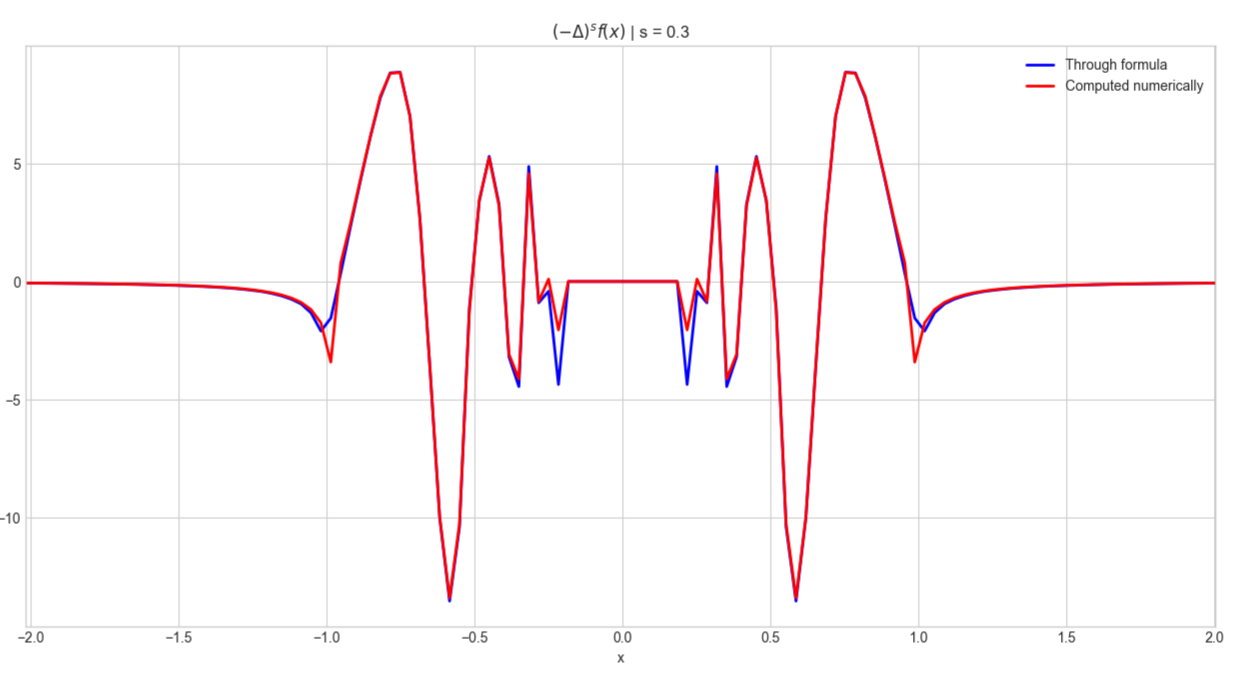}
\end{minipage}\hfill
\begin{minipage}{0.48\textwidth}
          \centering
          \includegraphics[width=1\linewidth]{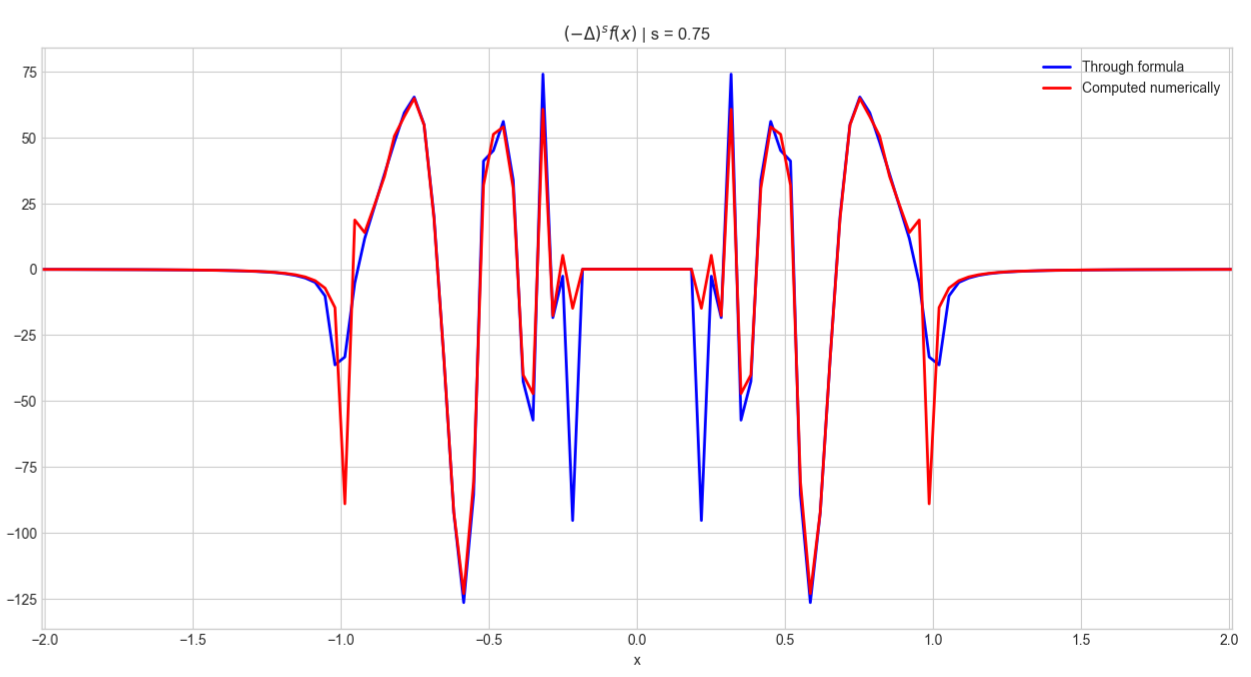}
      \end{minipage}
  \caption{$\lfrac K_\alpha[h]$ vs. $\frac{1}{|x|^{2\alpha}} K_\alpha[\lfrac h]$ for $s=0.3$ (on the left) and $s=0.75$ (on the right).}
\end{figure}


Outside $(-1, 1)$ the data behave better, since the Kelvin transform of $h$ has a compact support in the ball. Even so, the reconstruction has proven to be very sensitive to noise and prone to oscillations. 
\begin{table}
\centering
\begin{tabular}{c|ccccc}
\hline
$N \backslash \alpha$ & 0.2 & 0.6 & 1 & 1.5 & 1.9 \\
\hline
100 & 9.8586 & 5.8415 & 8.1391 & 20.4125 & 88.9307 \\
150 & 5.0618 & 3.4030 & 5.4128 & 15.3030 & 69.1308 \\
300 & 2.1148 & 1.5349 & 2.7948 & 9.7010 & 51.7716 \\
420 & 0.9620 & 0.4355 & 0.5287 & 1.8835 & 9.2894 \\
600 & 1.8909 & 0.4208 & 0.4695 & 1.4549 & 8.2669 \\
\hline
\end{tabular}
\caption{Relative $L^2$ errors in reconstruction}
\end{table}

\begin{figure}
    \centering
    \includegraphics[width=1\linewidth]{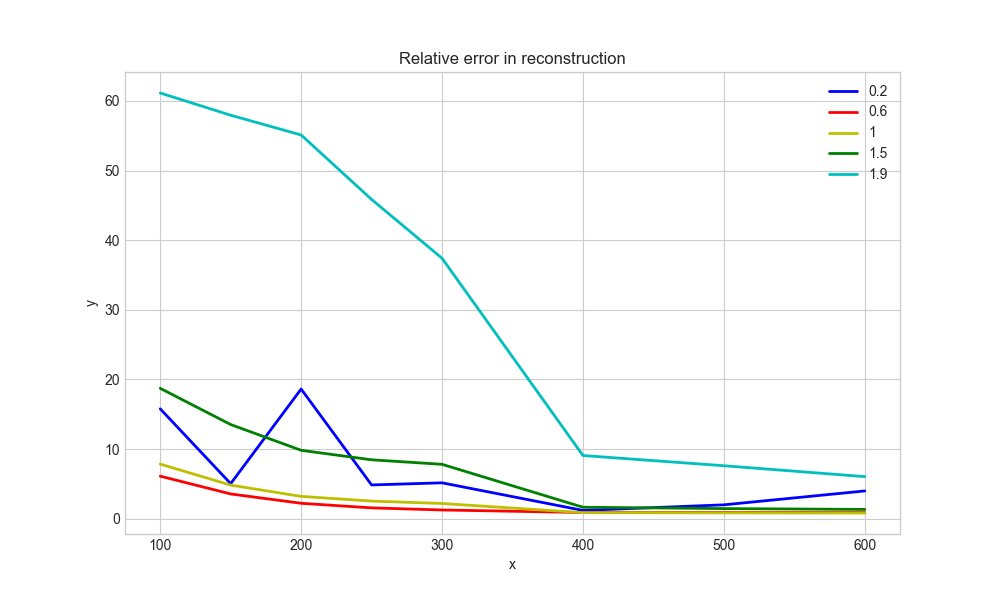}
    \caption{Relative $L^2$ errors in reconstruction for $h$ with $u(x) = \frac{\sin(x)}{x}$ and truncation interval is $(-5, 5)$. Each curve represents a different value of $\alpha := 2s$.}
    \label{fig_error_alpha}
\end{figure}
As it can be seen in Figure \ref{fig_error_alpha}, the error stabilizes when $N$ becomes large ($\gtrsim 400$). Reconstruction becomes almost impossible when $s \rightarrow 1$. It can be seen as a result of the fact that when $s\rightarrow 1$, the fractional Laplacian converges to the classical Laplacian in the sense of $H^{-s}(\rn)$, which means it approximates a local operator which does not enjoy the UCP.

\begin{figure}
    \centering
    \includegraphics[width=1\linewidth]{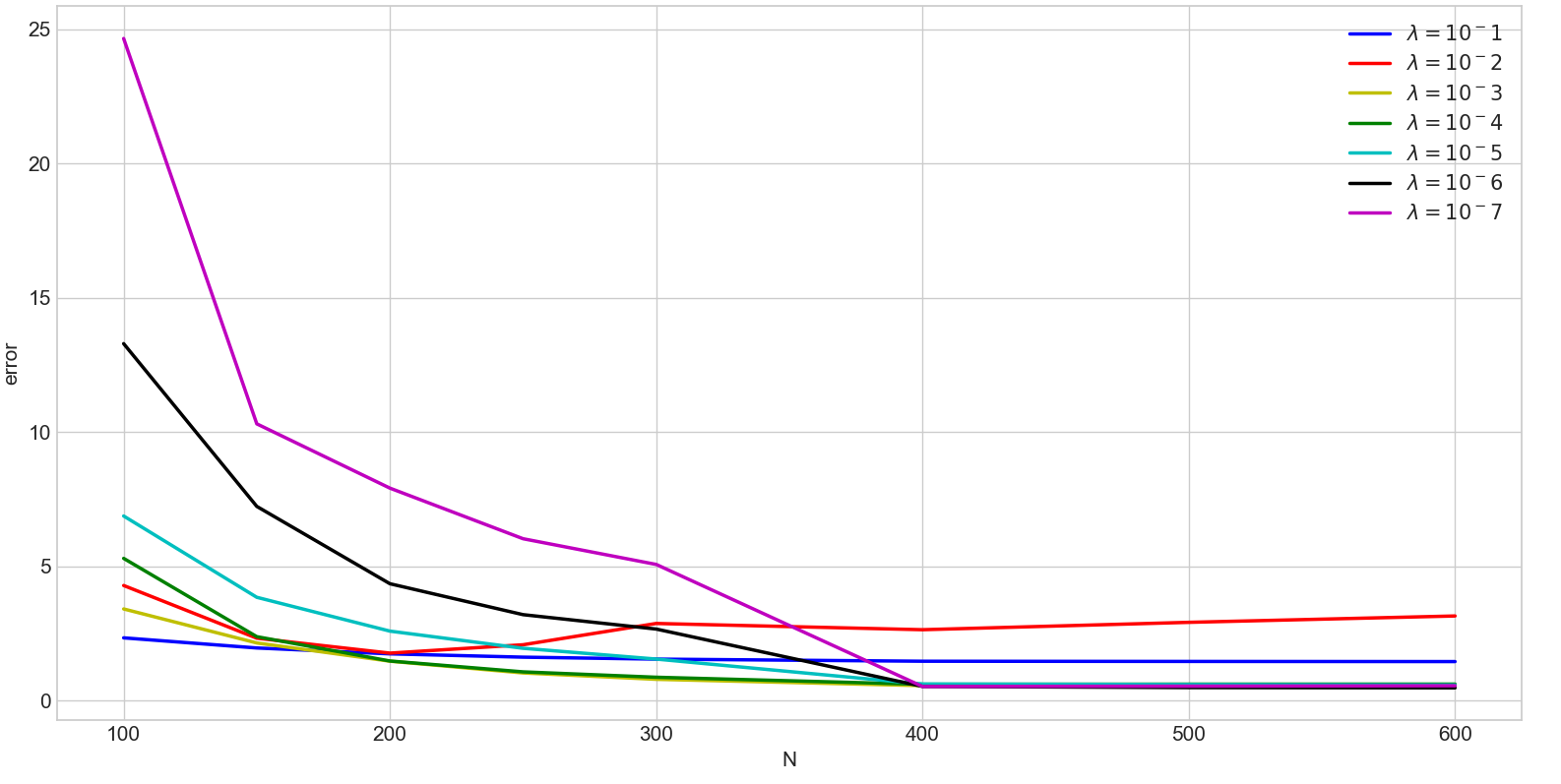}
    \caption{Relative $L^2$ errors in reconstruction for $h$ with $u(x) = e^{-x^2}$ where truncation interval is $(-5, 5)$ and $\alpha=0.5$. Each curve represents a different regularization coefficient $\lambda$.}
    \label{fig_error_regularization}
\end{figure}

\begin{figure}
    \centering
    \includegraphics[width=1\linewidth]{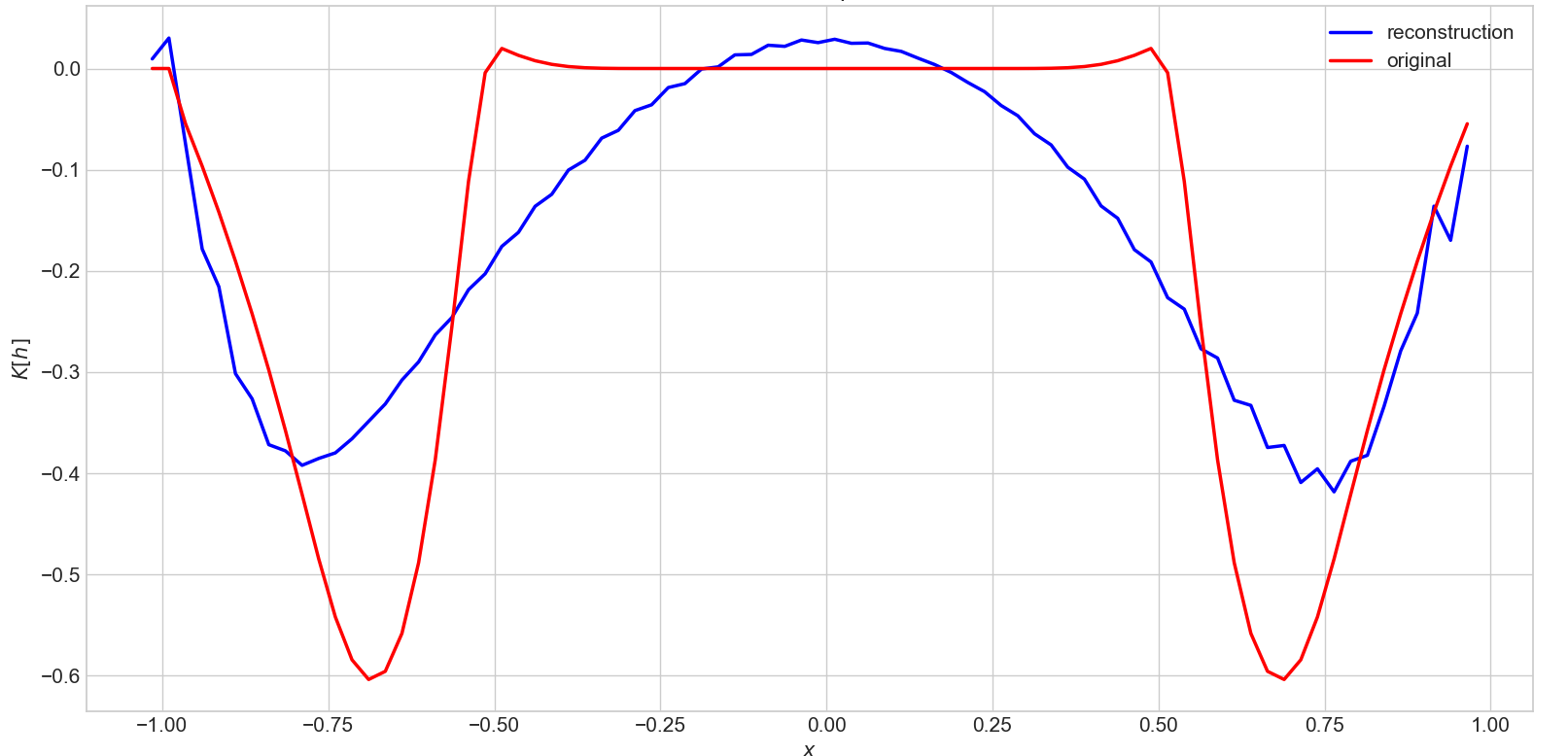}
    \caption{Reconstruction of the Kelvin transform of $h$ with $u(x) = e^{-x^2}$, $\alpha=0.6$ and $\lambda = 10^{-4}$.}
    \label{fig_reconstruction}
\end{figure}

Figure \ref{fig_error_regularization} shows that for small values of $h$, a high regularization parameter is a much better fit to lower the oscillations. Yet, while the reconstruction process is able to retrieve the general shape of the Kelvin transform, it fails to recover the function in a meaningful way. We use a generalized minimal residual method (GMRES) in order to solve the Tikhonov optimization problem. It proves to be more efficient than a simple conjugate gradient method in damping the oscillations. As in the work of Li \cite{CalderonNumerical} we use the $L^2-$norm as a residual norm for the numerical resolution of the Tikhonov minimization problem. An interesting perspective that might enhance the reconstruction would be to implement the Tikhonov regularization with the actual $H^{-s}$-norm. This might be challenging since the $H^{-s}$-norm involves nonlocal terms and is defined through duality.

\clearpage

\section*{Acknowledgement}
The authors would like to thank  Gis\`ele Mophou for the useful discussions and the great suggestion to include some numerical experiments in the paper.

            \bibliographystyle{plain}
              \bibliography{refs}
            
		\end{document}